\documentclass[11pt]{amsart}

\usepackage[utf8]{inputenc}
\usepackage[T1]{fontenc}
\usepackage{lmodern}
\usepackage{microtype}
\usepackage{amssymb}
\usepackage[letterpaper,margin=1in]{geometry}
\usepackage{enumitem}
\usepackage{xcolor}
\usepackage{graphicx}

\usepackage{natbib}
\usepackage[colorlinks,
            linkcolor=blue,
            citecolor=blue,
            urlcolor=blue]{hyperref}
\usepackage{cleveref}          

\hypersetup{
  pdftitle={Strong approximations for dependent random fields},
  pdfauthor={Soham Bonnerjee and Sayar Karmakar}
}

\numberwithin{equation}{section}

\theoremstyle{plain}
\newtheorem{theorem}{Theorem}[section]
\newtheorem{proposition}[theorem]{Proposition}
\newtheorem{lemma}[theorem]{Lemma}

\theoremstyle{definition}

\theoremstyle{remark}

\newtheorem*{example}{Example}

\newcommand{\R}{\mathbb{R}}
\newcommand{\C}{\mathbb{C}}

\newcommand{\Z}{\mathbb{Z}}
\newcommand{\N}{\mathbb{N}}
\newcommand{\E}{\mathbb{E}}
\newcommand{\IE}{\mathbb{E}}
\newcommand{\IP}{\mathbb{P}}

\newcommand{\as}{\mathrm{a.s.}}
\newcommand{\eps}{\varepsilon}
\newcommand{\ind}{\mathbf 1}

\newcommand{\bG}{\mathbf{G}}

\newcommand{\BX}{\mathbf{X}}
\newcommand{\BY}{\mathbf{Y}}

\newcommand{\Fcal}{\mathcal{F}}

\newcommand{\Lcal}{\mathcal{L}}

\newcommand{\cG}{\mathcal G}

\newcommand{\cQ}{\mathcal Q}
\newcommand{\cB}{\mathcal B}
\newcommand{\cD}{\mathcal D}

\newcommand{\norm}[1]{\Vert #1\Vert}
\newcommand{\inner}[1]{\langle #1\rangle}
\newcommand{\abs}[1]{\lvert #1 \rvert}

\newcommand{\cov}{\text{Cov}}

\newcommand{\wh}{\widehat}

\newcommand{\Cov}{\operatorname{Cov}}
\newcommand{\Var}{\operatorname{Var}}
\newcommand{\dist}{\operatorname{dist}}

\newcommand{\HA}{\mathbf{A}}   
\newcommand{\HO}{\mathbf{H}}   
\newcommand{\HD}{\mathbf{D}}   
\newcommand{\iu}{\mathrm{i}}   

\newcommand{\ignore}[1]{}

\definecolor{babypink}{rgb}{1.0, 0.25, 0.5}

\begin{document}

\title{Strong approximations for dependent random fields}

\author{Soham Bonnerjee}
\address{Department of Statistics and Data Science, The Wharton School, University of Pennsylvania.}
\email{sohamb@wharton.upenn.edu}

\author{Sayar Karmakar}
\address{Department of Statistics, University of Florida.}
\email{sayarkarmakar@ufl.edu}

\date{}

\subjclass[2020]{Primary 60F17, 60G60; Secondary 60F15, 62M40}
\keywords{Strong approximation, invariance principle, random fields, functional dependence, Gaussian coupling, Brownian sheet}

\begin{abstract}
We establish strong Gaussian approximations for centered stationary random fields on $\mathbb Z^d$, $d\ge2$, represented as measurable functions of i.i.d.\ innovations. Under finite $p$th moments, $p>2$, and sufficiently fast decay of spatial dependence, we construct a single coupling of the field with an i.i.d.\ Gaussian field. Its partial sums, scaled by the long-run standard deviation, approximate those of the original field almost surely, uniformly over all rectangular prefixes and all observation scales. Over a cube of side length $n$, the approximation error is $O_{\mathrm{a.s.}}(n^{\max\{d/p,(d-1)/2\}})$ up to logarithmic factors. While $d/p$ is the familiar moment term, the surface term $(d-1)/2$, absent for independent planar arrays, emerges as a cost of dependence, and cannot be removed even for finite-range Gaussian fields. The assumptions accommodate nonlinear and noncausal fields without imposing mixing. The proof combines a permanent dyadic blocking, a local Gaussian coupling in tensor Haar coordinates driven by a Doob martingale with vanishing conditional third moments, and a comparison of the resulting dependent Gaussian blocks with white noise.
\end{abstract}

\maketitle

\section{Introduction}
We consider the problem of strong Gaussian approximation for dependent spatial random fields. Concretely, 
for a centered stationary random field with finite $p$ moments, $(X_i)_{i\in\mathbb Z^d}$, we prove that there exists (ignoring measure theoretic nuances) an i.i.d Gaussian field $(Y_i)_{i\in\mathbb Z^d}$ and a constant $\sigma_{\infty}$ such that 
\begin{equation*}
\max_{m\le n}\abs{S_X(m)-\sigma_\infty S_Y(m)}
=O_{\as}(n^{\max\{ d/p, (d-1)/2 \} } \text{polylog $(n)$ terms}),
\end{equation*}
where, $S_X(m)=\sum_{ i \in A} X_i$, $A={\prod_{j=1}^d\{1,\ldots,m_j\}}$ with $m_i \leq n$ denotes the sum of the random variables $X$ over lower-left rectangular prefixes with top-right corner at $m\in \N^d$, and $S_Y(m)$ denotes the corresponding sum over the coupled Gaussian process $Y$. Strong approximation for the time-series case, i.e. $d=1$, has a long history, starting from the \citet{strassen1964invariance, kmt,kmt1976, csorgo1981strong} for i.i.d. observations, with \citep{philipp1975almost,berkes1979approximation, wu2005nonlinear,wu2007strong,liu2009strong, berkes2014kmt, wuzhou2011gaussian, karmakar2020optimal, bonnerjee2024gaussian} provide corresponding versions in time-series. However, passing from $d=1$ to $d>1$ comes with significant challenges. The main bottleneck can be summarized into the truism that $\Z^d$ is only partially ordered.
Moreover, control along a growing sequence of cubes does not by itself
control every thin rectangle inside them. Finally, the rates for $d=1$ do not provide intuition for the term $(d-1)/2$; for $d=2$ and independent random fields, \cite{rio1997} derived the rates $2/p$. However, we show that the rate $\max\{ d/p, (d-1)/2 \}$ is sharp for dependent random fields; specifically, the term $(d-1)/2$ arises as a \textit{cost of dependence}.

For independent arrays, \citet{morrow1986invariance} established
set-indexed strong approximations, while \citet{rio1993a,rio1993b, rio1997}
showed how rates depend on the indexing class. For dependent fields, strong approximations have largely been developed under mixing or association-type conditions \citep{berkes1981mixing,berkes1984gaussian,strittmatter1990strong,balan2005strong,bulinski2006strong,shashkin2007strong,cai2011strong}, while central limit theorems and weak invariance principles are available under mixing, projective, finite-dependence and orthomartingale conditions \citep{bolthausen1982central,dedecker1998central,dedecker2001exponential,wang2013new,volny2014hannan,giraudo2018invariance,cuny2026weak}. Our setting departs from this literature in three respects. First, we measure dependence through the functional-dependence coefficients of \citet{wu2005nonlinear}, which can be computed explicitly, as in \S\ref{ssc:examples}. Mixing and association, in contrast, are properties of the induced $\sigma$-fields or covariances, which are typically hard to verify, and strong mixing can fail even for simple linear processes. Moreover, functional-dependence and spatial Hannan conditions are not interchangeable \citep{volny2017erratum}, so results under projective criteria do not automatically cover our setting. Second, we impose no growth restriction on the rectangles, whereas, for instance, \citet{berkes1981mixing} require growth regions that stay away from the coordinate axes. Third, the closest work in terms of usage of functional dependence measures - \citet{el2013central}, establishes moment inequalities and central limit theorems for functionals of i.i.d.\ spatial innovations, but weak limit theorems do not deliver the pathwise rates of a coupling. 
To the best of our knowledge, obtaining a single, sharp, almost sure Gaussian
white-noise coupling for dependent random fields under general enough conditions  over unrestricted rectangles, has remained an open problem.
We settle this problem with an optimal approximation error.

Our main result, Theorem~\ref{thm:main} only assumes sufficiently fast decay of spatial dependence along with positive long-run variance.
It produces a coupling with an i.i.d Gaussian random field at all observation
scales, allowing nonlinear and noncausal fields without requiring strong
mixing. The construction uses a permanent dyadic partition to obtain independent
surrogate blocks with the correct marginal laws. The spatial versions of Doob or Burkholder-type maximal inequalities from
\citet{el2013central} and \citet{moricz1977moment} control this replacement
error. Within each block, tensor Haar coordinates reduce the simultaneous control of all lower-left rectangular sums to an $\ell^\infty$ bound on the Haar coefficients, at a logarithmic price. Revealing the innovations one binary digit at a time writes these coefficients as a martingale with vanishing conditional third moments, so that the third-order Yurinskii coupling of \citet{cattaneo2025yurinskii} applies without any symmetry assumption on the observations; this local coupling is the only source of the moment term $d/p$. A Gaussian-to-Gaussian comparison then replaces the resulting Gaussian blocks by an i.i.d Gaussian random field , while contributing as error the cost of dependence $(d-1)/2$. Finally, the Vorob'ev--Berkes--Philipp lemma stitches the local couplings into a single extension of the original probability space, on which the local errors are aggregated across dyadic scales.

\subsection{Organization of the paper}
Section~\ref{sec:main-result} states the local approximation conditions,
the strong approximation theorem, worked examples and the optimality result.
Section~\ref{sec:main-proof} proves the main theorem.
Appendices~\ref{sec:helper-results} and~\ref{sec:helper-proofs}
collect auxiliary results and their proofs.
Appendices~\ref{sec:proof-GA} and~\ref{sec:proof-spectral}
give the local Gaussian approximation and Gaussian-to-Gaussian comparison arguments.

\subsection{Notation}\label{sec:notation}
We write $\mathbb N=\{1,2,\ldots\}$ and $[n]=\{1,\ldots,n\}$,
and interpret inequalities between lattice points coordinatewise.
For $m=(m_1,\ldots,m_d)\in\mathbb N^d$, define
\[
  [1,m]=\prod_{j=1}^d\{1,\ldots,m_j\},
  \qquad
  S_X(m)=\sum_{i\in[1,m]}X_i.
\]
Partial sums of other arrays are defined analogously. For a positive
integer $n$, we use $m\le n$ as shorthand for $m\in[n]^d$. For a finite set $Q$, $|Q|$ denotes its cardinality.
For a vector $a\in\mathbb R^D$, $|a|$ denotes its Euclidean norm,
and $|a|_\infty=\max_{1\le j\le D}|a_j|$.
For $h\in\mathbb Z^d$ and a nonempty finite set $Q\subset\mathbb Z^d$,
define
\[
  \dist_\infty(h,Q)=\min_{i\in Q}|h-i|_\infty.
\]
For a rectangle
\[
  Q=\prod_{j=1}^d\{a_j,\ldots,a_j+L_j-1\},
\]
we write $R\preceq Q$ if $R$ is a subrectangle anchored at the
lower-left corner of $Q$, that is,
\[
  R=\prod_{j=1}^d\{a_j,\ldots,b_j\},
  \qquad
  a_j\le b_j\le a_j+L_j-1.
\]

For a random vector $Z$ and $q\ge1$, let
$\|Z\|_q=(\mathbb E|Z|^q)^{1/q}$ and $\|Z\|=\|Z\|_2$.
For a matrix $M$, $\rho^\star(M)$ denotes its largest singular value,
which equals $\sup_{|a|=1}|a^\top Ma|$ when $M$ is symmetric.
For positive semidefinite $\Sigma$, $N(0,\Sigma)$ denotes the centered
Gaussian law with covariance $\Sigma$, including the singular case.
We write $\iu$ for the imaginary unit and $\mathcal L(Z)$ for the
law of $Z$.

For a deterministic sequence $a_n>0$, we write $Z_n=O_{\as}(a_n)$
if, almost surely, $|Z_n|\le C_0a_n$ for all $n\ge n_0$, where
$C_0$ and $n_0$ are finite random variables.
We use the usual deterministic Landau notation.
We write $Z_n=O_{\mathbb P}(a_n)$ if $Z_n/a_n$ is bounded in
probability and $Z_n=o_{\mathbb P}(a_n)$ if $Z_n/a_n$ converges
to zero in probability.
Finally, we write $a_n\lesssim b_n$ if $a_n\le Cb_n$ for some
constant $C>0$, and $a_n\asymp b_n$ if both $a_n\lesssim b_n$
and $b_n\lesssim a_n$ hold.
\section{Strong approximation and optimality}\label{sec:main-result}

\noindent For $d\ge1$, we consider a centered stationary random field
$(X_i)_{i\in\mathbb Z^d}$ with the Bernoulli representation
\begin{equation}\label{eq:bernoulli-shift}
  X_i=g\bigl((\eta_{i-s})_{s\in\mathbb Z^d}\bigr),
  \  i\in\mathbb Z^d,
\end{equation}
where $(\eta_j)_{j\in\mathbb Z^d}$ are independent and identically
distributed real random variables on a probability space
$(\Omega,\mathcal F,\mathbb P)$, and
$g:\mathbb R^{\mathbb Z^d}\to\mathbb R$ is measurable. In general, when $d\geq 2$, due to the lack of spatial ordering, the above representation allows for dependence in all directions.

Following the functional-dependence framework introduced
for time series by \citet{wu2005nonlinear} and developed for random
fields by \citet{el2013central}, we measure dependence through the effect of changing a single
innovation. Let $X_i^\star$ be obtained from $X_i$ by replacing
$\eta_0$ with an independent copy $\eta_0'$, while leaving all other
innovations unchanged. For $q\ge1$, define
\begin{equation}\label{eq:functional-dependence}
  \delta_{i,q}=\|X_i-X_i^\star\|_q,
  \ 
  \Delta_q=\sum_{i\in\mathbb Z^d}\delta_{i,q},
\end{equation}
where $\|\cdot\|_q$ is the usual $L^q$ norm. The coefficient
$\delta_{i,q}$ measures the sensitivity of the observation at $i$ to
the innovation at the origin, and $\Delta_q$ aggregates this influence
over the lattice. 
\subsection{$m$-dependent approximations and assumptions}
We work with the centered field \eqref{eq:bernoulli-shift} and the
functional-dependence coefficients \eqref{eq:functional-dependence}.
The quantitative assumption concerns the dependence remaining after
conditioning on nearby innovations. For $m\in\mathbb N$, define
\begin{equation}\label{eq:local-def}
  \Fcal_i(m)=\sigma(\eta_{i-s}:|s|_\infty\le m),
  \
  X_i^{[m]}=\IE[X_i\mid\Fcal_i(m)].
\end{equation}
The localization radius is $m$: collections of these variables are
independent whenever their index sets have supremum-norm distance
greater than $2m$. Thus $(X_i^{[m]})$ is a finite-range approximation.
Since $X_i^{[m]}$ is obtained by integrating out the innovations outside the window $\{i-s:|s|_\infty\le m\}$, we may and do fix, once and for all, a Borel function $g_m$ such that $X_i^{[m]}=g_m(\eta_{i-s};|s|_\infty\le m)$ almost surely, for every $i\in\Z^d$. All modified versions of $X_i^{[m]}$ appearing below (starred versions, surrogate versions) are obtained by evaluating the same $g_m$ at the modified innovations. Similar to $X_i^\star$, also denote by $(X_i -X_i^{[m]})^\star$ when we replace $\eta_0$ by $\eta_0'$ in $X_i$ and in $X_i^{[m]}$. For $p\ge1$, define the functional-dependence coefficients of this residual field by
\begin{equation}\label{eq:residual-dependence}
  \delta_{i,p}^{[m]}=\norm{(X_i - X_i^{[m]})-(X_i -X_i^{[m]})^\star}_p,
  \ 
  \Delta_p^{[m]}=\sum_{i\in\Z^d}\delta_{i,p}^{[m]}.
\end{equation}

\subsection{Main theorem}
\begin{theorem}\label{thm:main}

Let $d\ge2$, and consider the centered stationary random field
$(X_i)_{i\in\mathbb Z^d}$ of the form \eqref{eq:bernoulli-shift}.
Suppose that $\|X_0\|_p<\infty$ and $\Delta_p<\infty$ for some $p>2$.
Assume that there is a constant
$A>\min\{d(1-2/p),1\}$ such that
\color{black}
\begin{equation}\label{eq:residual-rate-main}
  \Delta_p^{[s]}\lesssim (1+s)^{-A},\ s\ge1.
\end{equation}

Assume further that
$\sigma_\infty^2:=\sum_{h\in\Z^d}\Cov(X_0,X_h)>0$.
Then, on an extension of the original probability space, there exists
a field $(Y_i)_{i\in\Z^d}$ of mutually independent $N(0,1)$ random
variables such that, for every $\eps>0$,
\color{black}
\begin{equation}\label{eq:main-rate}
\max_{m\le n}\abs{S_X(m)-\sigma_\infty S_Y(m)}
=O_{\as}(a_n),
\end{equation}
where
\begin{align}
    a_n:=n^{\max\{ d/p, (d-1)/2 \} } (1 \vee \log n)^{\,3d/2+5/2-1/p}(1\vee \log \log n)^{\,3/2+\eps}. \label{eq:a-n}
\end{align}
\end{theorem}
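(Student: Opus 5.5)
The proof proceeds in four layers, mirroring the outline sketched in the introduction.

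\emph{Permanent dyadic blocking.} Fix once and for all a partition of $\mathbb N^d$ into dyadic shells. At level $k$ we work in the cube $[1,2^k]^d$ and tile $[1,2^{k+1}]^d\setminus[1,2^k]^d$ into cubes of side $L_k\approx 2^{\theta k}$, with $\theta<1$ to be tuned. On each block $Q$ we replace $X_i$ by the $m_k$-dependent approximation $X_i^{[m_k]}$ from \eqref{eq:local-def}, with $m_k\asymp L_k^{\gamma}$ for some small $\gamma$. We then shrink each block by a margin of width $2m_k$, which makes the cores independent. The replacement error $\max_{m\le n}|S_X(m)-S_{X^{[m_k]}}(m)|$ is controlled by the spatial Burkholder/M\'oricz maximal inequality (El Machkouri--Voln\'y--Wu, M\'oricz) applied to the residual field, whose dependence sum is $\Delta_p^{[m_k]}\lesssim m_k^{-A}$. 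This gives an $L^p$ bound of order $n^{d/2}m_k^{-A}$ up to logarithms. The condition $A>\min\{d(1-2/p),1\}$ is exactly what lets us choose $m_k$ so that this is $\lesssim n^{\max\{d/p,(d-1)/2\}}$. A Borel--Cantelli argument along $n=2^k$ upgrades the bound to an almost sure one.

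The margin strips carry the surface cost. Summed over blocks, they contribute a variance of order $n^{d-1}m_k$. Uniformly over all prefixes, again by M\'oricz, this yields $n^{(d-1)/2}$ up to powers of $\log n$. This is the first appearance of the $(d-1)/2$ term.

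\emph{Local coupling inside a block.} Write the vector of all anchored sums $\{S(R):R\preceq Q\}$ through tensor Haar coefficients of $(X_i^{[m_k]})_{i\in Q}$. Any anchored rectangle sum is a combination of at most $(\log L_k)^d$ Haar coefficients with bounded weights, so an $\ell^\infty$ coupling bound on the coefficients transfers to all $R\preceq Q$ at a logarithmic price. To couple the coefficient vector with a Gaussian vector of the same covariance, we reveal the innovations $\eta_j$ one binary digit at a time. This produces a Doob martingale whose increments have vanishing conditional third moments, because each binary digit is a symmetric $\pm$ bit. The third-order Yurinskii coupling of Cattaneo et al. then applies, with error driven by the $p$th moments. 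Summed across blocks and aggregated over scales, this gives the $n^{d/p}$ term. We glue the resulting block Gaussians $G_Q$, which are independent across blocks, using the Vorob'ev--Berkes--Philipp lemma.

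\emph{Gaussian-to-Gaussian comparison.} Each $G_Q$ has the covariance of the $m_k$-dependent field on $Q$, which is not $\sigma_\infty^2 I$. We compare it with $\sigma_\infty\sum_{i\in R}Y_i$ by coupling in the Fourier (spectral) basis. The covariance of the field sums over $R$ differs from $\sigma_\infty^2|R|$ by a boundary term of order $|\partial R|\sum_h|h|\,|\gamma(h)|$, together with the truncation $\sum_{|h|>m_k}|\gamma(h)|$. Both are finite under $\Delta_2<\infty$ and the rate \eqref{eq:residual-rate-main}. A Gaussian maximal inequality over the $L_k^d$ anchored rectangles then bounds the comparison error by $L_k^{(d-1)/2}\sqrt{\log}$ per block. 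Added up across blocks along a prefix, these errors are independent, so the total is again $n^{(d-1)/2}$ up to logarithms. Finally, we assemble everything over the dyadic levels with Borel--Cantelli. The $\log\log n$ factor with the $\eps$ arises from summability over the levels $k$.

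\emph{Main obstacle.} The delicate step is the local Gaussian coupling. Achieving the $d/p$ rate uniformly over all anchored rectangles, without any symmetry assumption on the observations, requires the Haar and binary-digit martingale construction. It also requires checking that the conditional covariance of this martingale concentrates well enough for Yurinskii's bound to apply. The second difficulty is calibrating $\theta$ and $\gamma$ so that the boundary, truncation and moment errors balance simultaneously under only $A>\min\{d(1-2/p),1\}$.
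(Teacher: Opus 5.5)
\textbf{There is a genuine gap.} Your local ingredients match the paper's: the tensor Haar reduction, the bitwise Doob martingale with the third-order Yurinskii coupling, the spectral Gaussian comparison, and Vorob'ev--Berkes--Philipp gluing. The blocking layer you put underneath them, however, does not deliver the stated rate, for two reasons.

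\emph{The margins.} By your own count, discarding strips of width $2m_k$ costs a standard deviation of order $(n^{d-1}m_k)^{1/2}$. This is not $n^{(d-1)/2}$ up to logarithms, because $m_k$ must be polynomial in $n$. To make the truncation error $n^{d/2}m_k^{-A}$ at most $n^{\beta}$, with $\beta=\max\{d/p,(d-1)/2\}$, you need $m_k\gtrsim n^{(d-2\beta)/(2A)}$, and $d-2\beta=\min\{d(1-2/p),1\}>0$. Two cases show the conflict, up to logarithms:
\begin{itemize}
\item When $d(1-2/p)\ge1$, the margins cost at least $n^{(d-1)/2+1/(4A)}$.
\item When $c':=d(1-2/p)<1$, they cost at least $n^{(d-1)/2+c'/(4A)}$. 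This exceeds $n^{d/p}$ whenever $A<c'/(2(1-c'))$, a range allowed by $A>c'$ as soon as $c'>1/2$. For example, $d=2$, $p=3$ and $A\in(2/3,1)$ all fail.
\end{itemize}
No tuning of $\theta,\gamma$ reconciles the two constraints on $m_k$. Taking $\theta<1$ makes it worse, since the total margin volume is then $n^{d-\theta}m_k$.

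\emph{The fine tiling.} A prefix meets $\asymp(n/L_k)^d$ blocks, and the blockwise errors add up. Even if they were independent and centred, the totals are too large:
\begin{itemize}
\item comparison errors of size $L_k^{(d-1)/2}$ sum to $n^{d/2}L_k^{-1/2}\gg n^{(d-1)/2}$;
\item local coupling errors of size $L_k^{d/p}$ sum to $(n/L_k)^{d/2}L_k^{d/p}\gg n^{d/p}$.
\end{itemize}
Moreover, the Yurinskii coupling gives only a tail bound per block, not a moment bound, so even this square-root summation is not available.

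A smaller point: your comparison step uses $\sum_h|h|\,|\gamma(h)|<\infty$. This is not implied when $A\le1$, which the hypothesis permits if $d(1-2/p)<1$. The paper instead bounds the spectral density of $G-\sigma_\infty Y$ by $\sigma_\infty^{-2}(f-f(0))^2$, which suffices once $A>1/2$.

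\textbf{How the paper avoids both problems.} First, each shell $[2^{k+1}]^d\setminus[2^k]^d$ is cut into only $2^d-1$ cubes of side $2^k$. A prefix therefore picks up $O(1)$ blockwise errors per level, each at most of order $2^{k\beta}$ up to logarithms. The sum over levels is geometric and dominated by the top level.

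Second, nothing is discarded. Each block is replaced by a surrogate $X_i^Q$, obtained by resampling the innovations outside $Q$. The surrogate has exactly the law of $(X_i)_{i\in Q}$, and surrogates of distinct blocks are independent. The discrepancy $X_i-X_i^Q$ is controlled by peeling $Q$ into layers of depth $\asymp 2^j$:
\begin{itemize}
\item On such a layer the $2^{j-1}$-local approximations coincide pathwise, so $X_i-X_i^Q=(X_i-X_i^{[2^{j-1}]})-(X_i^Q-X_i^{Q,[2^{j-1}]})$.
\item A layer has volume $\lesssim 2^jL^{d-1}$, so it contributes $\lesssim 2^{j(1/2-A)}L^{(d-1)/2}$.
\item Summing over $j$ gives $L^{(d-1)/2}$ when $A>1/2$, and $L^{d/2-A}\lesssim L^{\beta}$ otherwise.
\end{itemize}
This depth-graded accounting is your missing idea: the residual dependence decays away from the boundary, so no hard margin of polynomial width is needed. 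The $m$-dependent truncation is then used only inside the local coupling, at radius $2^k$, where its cost $2^{k(d/2-A)}$ stays below $2^{k\beta}$.
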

An arbitrary axis-aligned rectangle in $[n]^d$ can be written by
inclusion--exclusion using at most $2^d$ rectangular prefixes.
Consequently, \eqref{eq:main-rate} controls the error over all such
rectangles, up to this constant factor. In the following, we highlight two aspects of Theorem \ref{thm:main}; the ubiquity of \eqref{eq:residual-rate-main}, and the sharpness of the polynomial terms in $a_n$.
\subsection{\texorpdfstring{Examples\color{black}}{Examples}}\label{ssc:examples}

The following examples verify the condition \eqref{eq:residual-rate-main} for
linear and nonlinear fields with infinite-range dependence. For a real
coefficient array $a=(a_s)_{s\in\mathbb Z^d}$, write
\[
 \|a\|_{\ell^q}=\Big(\sum_s|a_s|^q\Big)^{1/q},\ 
 T_q(m)=\Big(\sum_{|s|_\infty>m}|a_s|^q\Big)^{1/q},
 \  q\in\{1,2\}.
\]

\begin{example}[Two-sided linear fields]
Let the innovations be centered, with $\|\eta_0\|_p<\infty$ for
some $p>2$ and $\Var(\eta_0)>0$. Suppose that
$\|a\|_{\ell^1}<\infty$ and $\sum_s a_s\ne0$, and set
\[
 X_i=\sum_s a_s\eta_{i-s}.
\]
The series converges in $L^p$. Independence and centering give
\[
 X_i^{[m]}=\sum_{|s|_\infty\le m}a_s\eta_{i-s},\ 
 \delta_{i,p}=|a_i|\|\eta_0-\eta'_0\|_p,
 \ 
 \delta_{i,p}^{[m]}=\delta_{i,p}\ind\{|i|_\infty>m\}.
\]
Consequently,
\[
 \Delta_p=\|\eta_0-\eta'_0\|_p\|a\|_{\ell^1},\ 
 \Delta_p^{[m]}=\|\eta_0-\eta'_0\|_p T_1(m),\ 
 \sigma_\infty^2=\Var(\eta_0)\Big(\sum_s a_s\Big)^2>0.
\]
In particular, if
\begin{equation}\label{eq:example-coeff-decay}
 |a_s|\lesssim (1+|s|_\infty)^{-\tau},\ 
 \tau>d+\min\{d(1-2/p),1\},
\end{equation}
then $T_1(m)\lesssim (1+m)^{d-\tau}$, and
Theorem~\ref{thm:main} applies with $A=\tau-d$.
\end{example}

\begin{example}[Quadratic Gaussian linear fields]
Let the innovation $\eta_i$'s be independent standard Gaussians, let $a \neq 0$ and $\|a\|_{\ell^1}= \sum_s |a_s|<\infty$. Define
\[
 L_i=\sum_s a_s\eta_{i-s},\  X_i=L_i^2-\mathbb E L_i^2.
\]
 H\"older's inequality gives $\delta_{i,p}\lesssim |a_i|\|a\|_{\ell^2}$, so
$\Delta_p<\infty$ for every $p>2$.
For $U_{i,m}=\sum_{|s|_\infty\le m}a_s\eta_{i-s}$ and
$V_{i,m}=L_i-U_{i,m}$, independence yields
\[
 X_i^{[m]}=U_{i,m}^2-\mathbb E U_{i,m}^2,\ 
 X_i-X_i^{[m]}=2U_{i,m}V_{i,m}
                   +V_{i,m}^2-\mathbb E V_{i,m}^2.
\]
Replacing $\eta_0$ by $\eta_0'$ gives
\[
(X_i-X_i^{[m]})-(X_i-X_i^{[m]})^\star=\begin{cases}
    2a_i(\eta_0-\eta_0')V_{i,m}, & |i|_\infty\le m,\\
    X_i-X_i^\star, & |i|_\infty>m,
\end{cases}
\]
whence, by H\"older's inequality and Gaussian moment bounds,
\[
\delta_{i,p}^{[m]}\lesssim\begin{cases}
    |a_i|\,T_2(m), & |i|_\infty\le m,\\
    |a_i|\,\|a\|_{\ell^2}, & |i|_\infty>m.
\end{cases}
\]
Summing over $i\in\Z^d$,
\begin{equation}\label{eq:quadratic-residual-bound}
 \Delta_p^{[m]}\lesssim \|a\|_{\ell^1}T_2(m)+\|a\|_{\ell^2}T_1(m).
\end{equation}
Since $T_2(m)\le T_1(m)$, condition
\eqref{eq:example-coeff-decay} again gives
\eqref{eq:residual-rate-main} with $A=\tau-d$.
\end{example}
\color{black}

\subsection{Optimality of the polynomial rate}\label{ssc:optimality}
For a coupling of $(X_i)$ with mutually independent standard Gaussian
variables $(Y_i)$, write
\begin{equation}\label{eq:approximation-error}
 D_n=\max_{m\in[n]^d}
       |S_X(m)-\sigma_\infty S_Y(m)|.
\end{equation}
The following example show that the polynomial exponent in
Theorem~\ref{thm:main} is sharp. There are two aspects to this sharpness: the exponent $d/p$ is the natural one, and has already appeared in corresponding strong approximation results for independent random field \citep{rio1993a, rio1997}, and its $d=1$ version has been proved for stationary observations \citep{berkes2014kmt}. The rate $(d-1)/2$ has also appeared in the literature \citep{rio1993a}, but in the context of strong approximation over classes larger than the class of rectangles. Even though one might think this term to be some sort of cost of dimensionality, \cite{rio1997} succeeded in removing this rate for $d=2$ for independent random fields. Perhaps surprisingly, this term reappears when we go beyond independence to stationary random fields. In the following, we show that the term $(d-1)/2$ is the \textit{cost of dependence}, and is unavoidable.

\begin{example}
    There exists a centered finite-range Gaussian field satisfying
the assumptions of Theorem~\ref{thm:main} for every $p>2$ such that
no coupling with $(Y_i)$ has
$D_n=o_{\mathbb P}(n^{(d-1)/2})$.
To see this, let
$X_i=\eta_i+\theta\eta_{i-e_1}$, where the innovations are independent
standard Gaussians, $e_1=(1,0,\ldots,0)$ and $0<\theta<1$.
This finite-range field satisfies the assumptions for every $p>2$,
has long-run variance $(1+\theta)^2$, and has spectral density
bounded away from zero.
For the thin rectangle with endpoint $m_n=(1,n,\ldots,n)$,
\[
 \frac{S_X(m_n)}{n^{(d-1)/2}}\sim N(0,1+\theta^2),
 \ 
 \frac{\sigma_\infty S_Y(m_n)}{n^{(d-1)/2}}
       \sim N(0,(1+\theta)^2).
\]
Since $m_n\in[n]^d$,
\[
\frac{|S_X(m_n)-\sigma_\infty S_Y(m_n)|}{n^{(d-1)/2}}\le\frac{D_n}{n^{(d-1)/2}}.
\]
If $D_n=o_{\IP}(n^{(d-1)/2})$, the left-hand side would tend to zero in probability, and Slutsky's theorem would force the variances of the two Gaussian laws in the previous display to coincide, i.e., $1+\theta^2=(1+\theta)^2$, or equivalently $\theta=0$. This contradicts $\theta>0$ and establishes the sharpness of the polynomial exponent
$(d-1)/2$ in \eqref{eq:a-n} as a cost of dependence.
\end{example}

\section{Proof of the main theorem}\label{sec:main-proof}
The proof of Theorem \ref{thm:main} proceeds in five steps, which we briefly outline for readers' convenience.
\begin{itemize}
    \item \textbf{Blocking} (\S\ref{ssc:blocking}): we fix a permanent dyadic partition $\cQ$ of $\N^d$ into cubes $Q$, and replace $(X_i)_{i \in Q}$ by constructing i.i.d. copies $\BX_Q:= (X_i^Q)_{i \in Q}$ so that $\BX_Q$ are independent across $Q$. The cost of this replacement is controlled by Proposition \ref{prop:Q-blocks}.
    \item \textbf{Local Gaussian approximation} (\S\ref{ssc:local-GA}): on each cube $Q$, Theorem \ref{thm:GA} couples the block $\BX_Q$ with a Gaussian vector $\bG_Q:=(G_i^Q)_{i \in Q}$ with $\Cov(\BX_Q)= \Cov(\bG_Q)$, uniformly over all lower-left sub-rectangles.
    \item \textbf{From stationary Gaussian field to an i.i.d one} (\S\ref{ssc:spectral}): Theorem \ref{thm:spectral} couples a stationary Gaussian field $(G^{\mathrm{sp}}_i)_{i\in\Z^d}$ (so that $(G^{\mathrm{sp}}_i)_{i \in Q} \overset{d}{=} \bG_Q$) to an i.i.d. random field $\sigma_{\infty} Y_i^{\mathrm{sp}}$, where $Y_i^{\mathrm{sp}}\sim N(0,1)$.
    \item \textbf{Aggregation} (\S\ref{ssc:proof-main}): the three local errors are summed over the cubes meeting $[1,n]^d$ to obtain the final error in our invariance principle.
    \item \textbf{Enlarged probability space} (\S\ref{ssc:gluing}): The aggregation step above requires a common probability space. We invoke the Vorob'ev--Berkes--Philipp theorem consigns the above blockwise couplings to a single extension of $(\Omega,\Fcal,\IP)$.
\end{itemize}
In the following, we discuss each of these steps seriatim.
\subsection{Blocking}\label{ssc:blocking}
Theorem~\ref{thm:main} asks for a single coupling that controls rectangular partial sums at every observation scale. In contrast, Gaussian couplings are usually constructed for finitely many observations at a time. Such local couplings can be assembled into one global coupling only if the pieces on which they act are independent. On the other hand, the original field restricted to disjoint blocks are not independent, since each $X_i$ depends on innovations outside any block containing $i$. In this section, we replace each block by an independent surrogate with the same law, and control the error suitably. Following the dyadic blocking approach of \cite{berkes2014kmt}, we first fix a partition of the lattice that does not depend on the observation scale $n$; this permanence is what allows a single coupling to serve all observation scales simultaneously. For $k\ge0$, let $$I_{k,0}=[2^k]= \{1, \ldots, 2^k\}, \ \text{and } I_{k,1}=\{2^k+1, \ldots, 2^{k+1}\}.$$ Define, for every binary vector $e=(e_1, \ldots, e_d)\in \{0,1\}^d \setminus \{0\}$,
\[ Q_{k,e}:=\prod_{j=1}^d I_{k,e_j}. \]
Moreover, let $Q_*= \{(1,\ldots, 1)\}$. Then
\begin{align} \label{eq:cq}
    \cQ
  :=\{Q_*\}\cup
  \{Q_{k,e}:k\ge0,\ e\in\{0,1\}^d\setminus\{0\}\}
\end{align}
  constitutes a countable, disjoint partition of $\N^d$ consisting entirely of cubes. Indeed, if $i\in\N^d$ satisfies $|i|_\infty\ge 2$, then there is a unique $k\ge0$ with $2^k<|i|_\infty\le 2^{k+1}$, and $i\in Q_{k,e}$ for the unique $e$ given by $e_j=\ind\{i_j>2^k\}$; since every $Q_{k,e}$ is contained in $[2^{k+1}]^d\setminus[2^k]^d$, cells belonging to distinct values of $k$ are disjoint. In particular, for each $k\ge 0$ and $e\in \{0,1\}^d \setminus \{0\}$, $Q_{k,e}$ has sides of length $2^k$, and
  \begin{equation}\label{eq:shells}
      [2^K]^d=Q_*\cup\bigcup_{k=0}^{K-1}\ \bigcup_{e\ne 0}Q_{k,e},\  K\ge 1.
  \end{equation}
  Let $\cQ_0:=\{Q_*\}\cup\{Q_{0,e}:e\ne0\}$ and $\cQ_k:=\{Q_{k,e}:e\ne0\}$, $k\ge1$; thus $|\cQ_k|\le 2^d$, and every cell in $\cQ_k$ has side length $2^k$. Consider the product extension
  \[(\Omega_0,\Fcal_0,\IP_0)
  =\Big(
      \Omega\times\Omega^{\cQ},
      \Fcal\otimes\Fcal^{\otimes\cQ},
      \IP\otimes\IP^{\otimes\cQ}
    \Big), \]
  on which we can define, for each $Q \in \cQ$, an i.i.d.\ copy $\{\eta^{(Q)}_j\}_{j\in \Z^d}$ of $\{\eta_j\}_{j\in \Z^d}$, these copies being mutually independent and independent of the original innovations. For $Q \in \cQ$, $h\in \Z^d$, define
  \begin{align} \label{eq:Q-blocks}
      \eta_h^Q = \begin{cases}
      \eta_h,& h\in Q,\\
      \eta^{(Q)}_h, & h \notin Q,
  \end{cases} \ \text{and}, \ X_i^Q = g(\eta_{i-s}^Q; \ s\in \Z^d), \ i\in \Z^d.
  \end{align}
Let $\BX^Q= \{X_i^Q\}_{i \in \Z^d}$, and denote the corresponding surrogate block by $\BX_Q:=(X^Q_i)_{i\in Q}\in\R^Q$. By construction of $\{\eta^Q_j\}_{j \in \Z^d}$ and using that $\cQ$ constitutes a disjoint partition, it immediately follows that 
\begin{equation}\label{eq:block-independence}
    (\BX^Q)_{Q \in \cQ} \text{ are mutually independent under }\IP_0.
\end{equation}
Moreover, since $(\eta^Q_h)_{h\in\Z^d}$ is again i.i.d.\ with the law of $\eta_0$, it follows that $\Lcal(\BX^Q)=\Lcal((X_i)_{i\in\Z^d})$, and in particular $\Lcal(\BX_Q)=\Lcal((X_i)_{i\in Q})$ for every $Q\in\cQ$. Thus, each surrogate block reproduces the law of the original block exactly, while \eqref{eq:block-independence} decouples distinct blocks. The price is the discrepancy between $X_i$ and $X_i^Q$, which share the innovations indexed by $Q$ but not those outside it. By \eqref{eq:residual-rate-main}, $X_i$ depends only weakly on innovations far from $i$, so this discrepancy is concentrated near the boundary of $Q$. Proposition~\ref{prop:Q-blocks}, proved in Appendix~\ref{sec:helper-proofs}, makes this precise: for $Q\in\cQ_k$,
\begin{align}
    \big\| \max_{R \preceq Q} |\sum_{i\in R}(X_i -  X_i^Q)|\big\|_p \lesssim u_A(k),\ u_A(k):=\begin{cases}
        2^{k(d-1)/2},& A>1/2,\\
        2^{k(d-1)/2} (k+1), & A=1/2,\\
        2^{k(d/2-A)}, & 0 < A < 1/2.
        \end{cases}  \label{eq:Prop-1}
\end{align}
Denote $U_Q:=\max_{R \preceq Q} |\sum_{i\in R}(X_i -  X_i^Q)|$. Then, \eqref{eq:Prop-1} along with $|\cQ_k|\le 2^d$ and Markov's inequality instructs that
\[ \sum_{k=0}^{\infty} \IP_0\Big(
    \max_{Q\in\cQ_k}U_Q
    >u_A(k)\sqrt{k+1}
  \Big)
  \lesssim \sum_{k=0}^{\infty}(k+1)^{-p/2}<\infty,  \]
since $p>2$, which, via the Borel--Cantelli lemma, implies that
\begin{equation}\label{eq:boundary-as}
  \max_{Q\in\cQ_k}U_Q
  =O_{\as}\!\Big(u_A(k)\sqrt{k+1}\Big), \   k\to\infty.
\end{equation}
The bound \eqref{eq:boundary-as} comes from the surface of a cube of side $2^k$. The outermost layer of such a cube contains $O(2^{k(d-1)})$ sites, all of which echo the effect of replacing the innovations across the boundary. This is the first of the two places in the proof where the surface term $n^{(d-1)/2}$ in \eqref{eq:main-rate} arises. With independent surrogate blocks with the marginal laws in hand, we now couple each of them with a Gaussian vector.
\subsection{A local Gaussian approximation}\label{ssc:local-GA}
With the blocks decoupled, each surrogate block $\BX_Q$ can be treated in isolation. In this section, we couple it with a centered Gaussian vector $\bG_Q$ of the same covariance, $\Cov(\bG_Q)=\Sigma_Q$, such that $\max_{R\preceq Q}|\sum_{i\in R}(X^Q_i-G_{Q,i})|$ is small. By stationarity, it suffices to treat the cube $[2^k]^d$.
\begin{theorem} \label{thm:GA}
Grant the assumptions of Theorem \ref{thm:main}. Let $Q:=[2^k]^d$ for some $k\ge 1$. Denote $r= p \wedge 4$, and $\nu_A:= 1 + \sum_{\ell=1}^k2^{\ell(d(1-2/r) - A)}.$ Then, there exists a constant $C>0$ independent of $k$ and $x$, such that for every $x\in (0,1/2)$ the following holds: on a potentially enriched probability space, there exist a random vector $\tilde{\BX}^Q=(\Tilde{X}^Q_i)_{i\in Q} \overset{d}{=} ({X}_i)_{i\in Q}$ and a centered Gaussian vector $\bG^Q=(G^Q_i)_{i\in Q}$ with $\Cov(\bG^Q)=\Cov(({X}_i)_{i\in Q})$, such that
\[\IP\Big(\max_{R \preceq Q} \big|\sum_{i \in R}(\Tilde{X}^Q_i - G^Q_i)\big| > C x^{-3/2} 2^{kd/r} \nu_A^{1/2} (k+1)^{3d/2 + 1- 1/r}\Big)\le x. \]
\end{theorem}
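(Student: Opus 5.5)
We follow the strategy announced in the introduction: tensor Haar coordinates, a martingale obtained by revealing the innovations bit by bit, and the third-order Yurinskii coupling of \citet{cattaneo2025yurinskii}.

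\textbf{Step 1: reduction to Haar coefficients.} Let $N=2^{kd}$ and let $\{\psi_\lambda\}_{\lambda\in\Lambda}$ be the tensor-product Haar basis of $\R^Q$ for $Q=[2^k]^d$. This basis is orthonormal and has $|\Lambda|=N$. For a lower-left rectangle $R\preceq Q$, the indicator $\ind_R$ factors as $\ind_R=\bigotimes_j \ind_{\{1,\ldots,b_j\}}$. In each coordinate, a one-dimensional prefix indicator meets at most one Haar function per level ($k+1$ levels), and each such coefficient is bounded by $2^{\ell/2}$ at level $\ell$. Consequently, for any vector $Z\in\R^Q$,
\[
\max_{R\preceq Q}\Big|\sum_{i\in R}Z_i\Big| \le \sum_{\lambda}|\inner{\ind_R,\psi_\lambda}|\,|\inner{Z,\psi_\lambda}| \lesssim (k+1)^d\, \max_\lambda w_\lambda^{-1}|\inner{Z,\psi_\lambda}|,
\]
where $w_\lambda$ are scale weights chosen so that $\sum_\lambda|\inner{\ind_R,\psi_\lambda}|w_\lambda\lesssim (k+1)^d 2^{kd/2}$. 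It therefore suffices to couple the vector $W=(\inner{\BX_Q,\psi_\lambda})_\lambda\in\R^N$ with a Gaussian vector of the same covariance in a weighted $\ell^\infty$ norm. This costs a factor of $(k+1)^d$; one more half-power of $(k+1)^d$ will come from the Gaussian maximal inequality inside Yurinskii's bound, which yields the exponent $3d/2$.

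\textbf{Step 2: truncation to a finite-range field and a martingale.} Replace $X_i$ by $X_i^{[s]}$ at dyadic radii $s=2^\ell$, and telescope using $X_i^{[2^{\ell}]}-X_i^{[2^{\ell-1}]}$. The residual is controlled by \eqref{eq:residual-rate-main} together with the Burkholder-type moment inequality of \citet{el2013central} in $L^r$, $r=p\wedge4$. The layers give the sum $\nu_A=1+\sum_\ell 2^{\ell(d(1-2/r)-A)}$: each layer contributes variance or $r$-th moment $2^{kd}2^{\ell d(1-2/r)}2^{-\ell A}$ after accounting for block sizes.

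For the local-dependent part we form the Doob martingale of $W$ with respect to the filtration generated by revealing the innovations $\eta_h$ (for $h$ in the $2^k$-neighbourhood of $Q$) one binary digit at a time. Each $\eta_h$ is encoded by its i.i.d.\ Bernoulli digits. A single-digit increment has a conditional law that is a two-point symmetric shift, so its conditional third moment vanishes. This is exactly the hypothesis of the third-order Yurinskii coupling of \citet{cattaneo2025yurinskii}.

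\textbf{Step 3: applying the coupling.} Yurinskii's coupling gives $\IP(\|W-\tilde G\|_{w,\infty}>t)\lesssim$ a bound of order $x$ for $t\asymp x^{-3/2}(\beta_{4}\log N)^{1/4}\ldots$. Here the relevant fourth-moment and conditional-variance terms are bounded by $\delta$-coefficients summed via $\Delta_p$ and the telescoping layers. They scale as $N^{1/r}$ per unit weight, times $\nu_A^{1/2}$, times powers of $\log N\asymp k+1$. The variance-mismatch term, which measures the randomness of the predictable quadratic variation, has to be handled as well. We expect to control it by the same Efron--Stein/Burkholder bound, giving again $2^{kd/r}\nu_A^{1/2}$. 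Vorob'ev--Berkes--Philipp then realises $\tilde{\BX}^Q$ and $\bG^Q=\sum_\lambda\tilde G_\lambda\psi_\lambda$ on an enriched space, with $\Cov(\bG^Q)=\Cov(\BX_Q)$ exactly, since Yurinskii couples to the exact covariance. Combining with Step 1 and tracking the logarithms gives the exponent $3d/2+1-1/r$.

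\textbf{Main obstacle.} We expect the hardest part to be Step 3's control of the conditional covariance fluctuation and of the third/fourth-order moments of the digit martingale, uniformly over $N=2^{kd}$ coordinates. The dependence across Haar coefficients through shared innovations must be bounded by $\Delta_p$ without losing more than $\nu_A^{1/2}$. Our first attempt would bound each moment by Burkholder's inequality applied coordinatewise, followed by a union bound over $\log N$.
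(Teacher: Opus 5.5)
Your architecture matches the paper's: scaled tensor Haar coordinates, dyadic projections $X_i^{[2^s]}-X_i^{[2^{s-1}]}$, the bit-by-bit Doob martingale with increments $\zeta_{h,b}v_{h,b}$, and the third-order coupling of \citet{cattaneo2025yurinskii}. But the two estimates that actually produce the threshold $x^{-3/2}2^{kd/r}\nu_A^{1/2}$ are missing, and the route you suggest for the harder one would fail.

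\emph{The fourth-moment term.} For $2<p<4$, $\beta_{\infty,3}$ is a fourth-moment quantity. The bitwise increments have all moments only because finitely many bits are revealed. The sum $\sum_b\IE|D_{h,b}|^4$ need not stay bounded as $B\to\infty$ when only $p<4$ moments are available, so nothing ``summed via $\Delta_p$'' controls it. The paper truncates: $\wh D_{h,b}=D_{h,b}\ind\{|D_{h,b}|\le u\}$ with $u=2^{kd/p}(k+1)^{d/2}$. The key observation is that $|D_{h,b}|=|v_{h,b}|$ is $\cG_{h,b-1}$-measurable. Hence the truncated increments are still a Rademacher sign times a predictable vector, so they remain martingale differences with vanishing conditional third moments. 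Then $\sum_{h,b}\IE|\wh D_{h,b}|^4\le u^{4-p}\sum_h\IE M_h^p$ with $M_h=(\sum_b|D_{h,b}|^2)^{1/2}$. The Hilbert-space Burkholder inequality at each site, combined with the sparsity of the columns of $\HA$, gives $\|M_h\|_p\lesssim (k+1)^{d/2}\sum_s\Theta_{s,p}$, with $|\cdot|$ the Euclidean norm on $\R^Q$. A coordinatewise bound would cost powers of $2^{kd}$, since $\beta_{\infty,3}$ involves $|D_{h,b}|^3$. The truncation errors in the sum and in the quadratic variation cost $u^{1-p}$ and $u^{2-p}$ times the same $p$-th moment sum. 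Two smaller corrections: this $(k+1)^{d/2}$ comes from the column norms $|\HA_{\cdot i}|=(k+1)^{d/2}$, not from the Gaussian maximal inequality, which contributes only $\phi_\infty(2^{kd})\asymp(k+1)^{1/2}$. And in Step 1 the right weights are $w_\lambda=|Q_\lambda|^{-1/2}$, which give $\sum_\lambda|\inner{\ind_R,\psi_\lambda}|w_\lambda\le(k+1)^d$ with no factor $2^{kd/2}$.

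\emph{The covariance term.} This is the more serious gap. The covariance enters the coupling bound through $\IE[\rho^\star(\Omega)]$, the spectral norm of a $2^{kd}\times 2^{kd}$ random matrix. Coordinatewise Burkholder bounds plus a union bound control at best the entries. Passing from entries to the operator norm can lose a factor $2^{kd}$, and with only $p/2$ moments the union bound over $2^{2kd}$ entries already loses a polynomial factor. The paper's mechanism is as follows.
\begin{enumerate}
\item Write $\sum_{h,b}V_{h,b}-\Cov(Z_B)=\sum_{0\le t\le s\le k}\sum_h(T_h^{s,t}-\IE T_h^{s,t})$, where $T_h^{s,t}=\sum_b(v^{(s)}_{h,b}v^{(t)\top}_{h,b}+v^{(t)}_{h,b}v^{(s)\top}_{h,b})$ (halved when $t=s$).
\item Show that $T_h^{s,t}$ depends only on bits at sites within distance $2^{s+1}$ of $h$.
\item Colour the sites into $\asymp 2^{sd}$ classes of sites at mutual distance $>2^{s+2}$, so that within a class these matrices are independent.
\item Apply a matrix Rosenthal inequality in $L^{p/2}$ (matrix Bernstein after truncation), giving $2^{sd}\cdot 2^{2(k-s)d/r}(k+1)^{d+1-2/r}\Theta_{s,p}\Theta_{t,p}$.
\end{enumerate}
Summing over $s$ with $\Theta_{s,p}\lesssim 2^{-sA}$ is exactly where $\nu_A$ comes from. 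It does not arise from the residual $X-X^{[2^k]}$, which costs only $2^{k(d/2-A)}$.

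\emph{The target covariance.} The coupling produces $N(0,\Sigma)$ for whichever $\Sigma$ you feed in. To get $\Cov(\bG^Q)=\Cov((X_i)_{i\in Q})$ you must choose $\Sigma=\HA\Sigma_Q\HA^\top$ and pay for two deterministic mismatches. The first is $\rho^\star(\HA(\Sigma_Q^{[2^k]}-\Sigma_Q)\HA^\top)\le 2^{kd}\rho^\star(\Sigma_Q^{[2^k]}-\Sigma_Q)\lesssim 2^{k(d-A)}$. The second is $\Cov(Z_B)-\Cov(Z)$, which, like $|Z-Z_B|_\infty$, is made small by taking $B$ large. No Vorob'ev--Berkes--Philipp step is needed here: take $\tilde\BX^Q=(X_i)_{i\in Q}$ on the space enlarged by the coupling.
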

Note that by stationarity, Theorem \ref{thm:GA} holds for every $Q\in\cQ_k$. Let $\{x_k\}_{k\ge0}\subset(0,1/2)$ be a sequence with $\sum_kx_k<\infty$; its choice will be specified in \S\ref{ssc:proof-main}. For each $Q\in\cQ$, put
\[
\mu_Q:=\Lcal((X_i)_{i\in Q}),\ 
\Sigma_Q:=\Cov((X_i)_{i\in Q}),\ 
\nu_Q:=N(0,\Sigma_Q).
\]
For $Q\in\cQ_k$ with $k\ge1$, choose the finite-dimensional joint law
\[
\pi_Q:=\Lcal(\widetilde{\BX}_Q,\bG_Q)
\ \text{on }\R^Q\times \R^Q
\]
provided by Theorem~\ref{thm:GA} with $x=x_k$. Its marginals are $\mu_Q$ and $\nu_Q$, respectively. For the blocks $Q\in\cQ_0$, choose $\pi_Q:=\mu_Q\otimes\nu_Q$. On the common probability space of \S\ref{ssc:proof-main}, let $\bG_Q$ be such that $(\BX_Q,\bG_Q)$ has law $\pi_Q$, and write
\[
V_Q:=\max_{R\preceq Q}\Big|\sum_{i\in R}(X_i^Q-G_{Q,i})\Big|.
\]
Set
\begin{align}
    b_k:=C x_k^{-3/2}2^{kd/r}\nu_A(k)^{1/2}(k+1)^{3d/2+1-1/r}, \label{eq:b_k}
\end{align}
with $C$ as in Theorem~\ref{thm:GA}. Theorem~\ref{thm:GA} along with $|\cQ_k|\le 2^d$ instructs that
\[ \sum_{k=1}^{\infty}\tilde\IP\Big(\max_{Q\in\cQ_k}V_Q>b_k\Big)\le 2^d\sum_{k=1}^\infty x_k<\infty,\]
which, via the Borel--Cantelli lemma, implies that
\begin{equation}\label{eq:VQ-as}
  \max_{Q\in\cQ_k}V_Q
  =O_{\as}\!\Big(b_k\Big), \   k\to\infty.
\end{equation}
We remark that the moment exponent $d/p$ enters the proof through \eqref{eq:VQ-as}; for $p\le 4$, $b_k$ grows as $2^{kd/p}$, up to the dependence factor $\nu_A(k)^{1/2}$ and logarithmic factors. For $p>4$, the method saturates at $2^{kd/4}$, which is absorbed by the surface term $2^{k(d-1)/2}$, since $d\ge2$. 

Note that the Gaussian vectors $\bG_Q$, however, carry the covariance $\Sigma_Q$ of the original field rather than that of white noise, and \S\ref{ssc:spectral} closes this gap.

\subsection{From a stationary Gaussian field to an i.i.d one}\label{ssc:spectral}
The Gaussian vectors $\bG_Q$ of \S\ref{ssc:local-GA} inherit the covariance $\Sigma_Q$ of the original field, whereas the target in Theorem~\ref{thm:main} is white noise scaled by $\sigma_\infty$. This section bridges the two: within each block, we couple a stationary Gaussian field with autocovariance $\gamma(h)=\Cov(X_0,X_h)$ to $\sigma_\infty$ times an i.i.d.\ $N(0,1)$ field, uniformly over lower-left sub-rectangles. Denote
 Denote
\begin{equation}\label{eq:wA}
    w_A(k):=\begin{cases}
        2^{k(d-2A)/2}(k+1)^{1/2}, & 0<A<1/2,\\
        2^{k(d-1)/2}(k+1), & A=1/2,\\
        2^{k(d-1)/2}(k+1)^{1/2}, & A>1/2.
    \end{cases}
\end{equation}
The following theorem compares a dependent Gaussian field with long run variance $\sigma_{\infty}^2$, to an i.i.d. Gaussian random field with variance $\sigma_{\infty}^2$.
\begin{theorem}\label{thm:spectral}
Grant the assumptions of Theorem \ref{thm:main}. On some probability space $(\Omega^{\mathrm{sp}},\Fcal^{\mathrm{sp}},\IP^{\mathrm{sp}})$ there exist centered, jointly Gaussian, stationary fields $(G^{\mathrm{sp}}_i)_{i\in\Z^d}$ and $(Y^{\mathrm{sp}}_i)_{i\in\Z^d}$ such that $\Cov(G^{\mathrm{sp}}_i,G^{\mathrm{sp}}_j)=\gamma(i-j)$, $(Y^{\mathrm{sp}}_i)_{i\in\Z^d}$ are i.i.d.\ $N(0,1)$, and, for a constant $C>0$, for every $k\ge0$ and every translate $Q$ of $[2^k]^d$,
\[\IP^{\mathrm{sp}}\Big(\max_{R\preceq Q}\big|\sum_{i\in R}(G^{\mathrm{sp}}_i-\sigma_\infty Y^{\mathrm{sp}}_i)\big|>C\,w_A(k)\Big)\le 2\cdot 4^{-k}.\]
\end{theorem}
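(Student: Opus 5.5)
Our plan is to build both fields from a single spectral (white-noise) representation. Let $f(\omega)=\sum_h\gamma(h)e^{-\iu\langle h,\omega\rangle}$, $\omega\in[-\pi,\pi]^d$, be the spectral density of $\gamma$. It is continuous and nonnegative because $\sum_h|\gamma(h)|\le \Delta_2^2<\infty$; this summability follows from the standard projection bound $|\gamma(h)|\le\sum_j\delta_{j,2}\delta_{j+h,2}$. We also have $f(0)=\sigma_\infty^2>0$. Put $\psi:=\sqrt f$ with Fourier coefficients $(c_s)_{s\in\Z^d}\in\ell^2$, so that $\sum_s c_sc_{s+h}=\gamma(h)$. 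Take $(Y^{\mathrm{sp}}_i)$ i.i.d.\ $N(0,1)$ and define $G^{\mathrm{sp}}_i:=\sum_s c_sY^{\mathrm{sp}}_{i-s}$. This gives the required covariance, joint Gaussianity and stationarity, and $\sum_sc_s=\psi(0)=\sigma_\infty$ whenever $\sum_s|c_s|<\infty$. The difference field is then the linear filter $D_i:=G^{\mathrm{sp}}_i-\sigma_\infty Y^{\mathrm{sp}}_i=\sum_s b_sY^{\mathrm{sp}}_{i-s}$ with $b_s=c_s-\sigma_\infty\ind\{s=0\}$ and $\sum_sb_s=0$.

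For a rectangle $R$, $\sum_{i\in R}D_i=\sum_j Y^{\mathrm{sp}}_j\,\beta_R(j)$ with $\beta_R(j)=\sum_{i\in R}b_{i-j}$. Because $\sum_s b_s=0$, we can write $\beta_R(j)=-\sum_{i\notin R}b_{i-j}$. So $|\beta_R(j)|$ is controlled by the tail mass $\tau(r):=\sum_{|s|_\infty>r}|b_s|$ evaluated at $r=\dist_\infty(j,\partial R)$, for $j$ both inside and outside $R$. Summing over layers at distance $r$ from the boundary, each of size $\lesssim L^{d-1}$ (where $L=2^k$), we obtain
\[
\Var\Big(\sum_{i\in R}D_i\Big)\lesssim L^{d-1}\sum_{r\le L}\tau(r)^2 + (\text{far contribution}).
\]
If $\tau(r)\lesssim (1+r)^{-A}$, this gives $L^{d-1}$ for $A>1/2$, $L^{d-1}\log L$ for $A=1/2$, and $L^{d-2A}$ for $A<1/2$. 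These are exactly $w_A(k)^2$ up to the logarithmic factors. A Gaussian maximal step finishes the argument. We use a chaining or Móricz-type maximal inequality over the $\le L^d$ prefixes $R\preceq Q$; alternatively, a union bound suffices, since each sum is Gaussian with variance $\lesssim w_A(k)^2/(k+1)$, and a union bound over $2^{kd}$ prefixes costs a factor $\sqrt{k}$, which is absorbed in the $(k+1)^{1/2}$ factors of $w_A$. This gives the tail probability $2\cdot4^{-k}$ after choosing $C$ large enough. Translation invariance makes the bound uniform over translates of $[2^k]^d$.

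The main obstacle is transferring the decay hypothesis \eqref{eq:residual-rate-main} on $\Delta^{[s]}_p$ to the tail decay $\tau(r)\lesssim r^{-A}$ of the square-root filter $c_s$. This is not automatic: the square root is a nonlinear operation on the spectral side. Our first attempt is a Wiener–Lévy / Beurling-algebra argument. We would show that $\Delta_2^{[s]}\lesssim s^{-A}$ forces $\sum_{|h|_\infty>r}|\gamma(h)|\lesssim r^{-A}$, via $\gamma(h)=\E[X_0X_h]$ and the $m$-dependent approximations, since $X^{[s]}_0$ and $X^{[s]}_h$ are independent for $|h|_\infty>2s$. Then $f$ lies in the weighted Wiener algebra with tail weight $r^{-A}$. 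Where $f$ is bounded away from $0$, $\sqrt f$ inherits the same tail decay by Wiener–Lévy.

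$f$ may vanish somewhere, so as a fallback we would not insist on $\sqrt f$. Instead, on the set $\{f\ge \sigma_\infty^2/2\}$ near $0$ we use $\sqrt f$, and we choose any measurable square root elsewhere. We then treat the remaining part, which contains no low frequencies, by a direct spectral variance bound. Its variance over a rectangle is $\int|\widehat{\ind_R}|^2|\psi-\sigma_\infty|^2$, and $|\psi(\omega)-\sigma_\infty|^2\lesssim|\omega|^{2A\wedge 2}$ near $0$ by the Hölder-type regularity of $f$ at the origin that the tail condition provides. Estimating $\int|\widehat{\ind_R}(\omega)|^2|\omega|^{2\alpha}d\omega$ coordinatewise with the Fejér kernel yields the same $L^{d-1}$ versus $L^{d-2A}$ trichotomy. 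This spectral route can replace the layer counting entirely, so it is the version we would try first if the Wiener–Lévy step stalls.
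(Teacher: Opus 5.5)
Your spectral fallback is correct and is essentially the paper's proof. You should make it the proof, not the backup.

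The paper builds $G^{\mathrm{sp}}$ and $Y^{\mathrm{sp}}$ from one complex Gaussian random measure $W$, using the integrands $e^{\iu\langle i,\lambda\rangle}\sqrt{f(\lambda)}$ and $e^{\iu\langle i,\lambda\rangle}$. This is the same coupling as your $G^{\mathrm{sp}}_i=\sum_sc_sY^{\mathrm{sp}}_{i-s}$. That series converges in $L^2$ because $(c_s)\in\ell^2$, so summability of $(c_s)$ is never needed. The paper then:
\begin{enumerate}
\item bounds the prefix variances by $\int f_H\prod_j|D_{m_j}(\lambda_j)|^2\,\mathrm{d}\mu$, where $f_H=(\sqrt f-\sigma_\infty)^2\lesssim\min\{|\lambda|_\infty,1\}^{\varrho}$;
\item splits $|\lambda|_\infty^\varrho\le\sum_j|\lambda_j|^\varrho$ and applies the one-dimensional Dirichlet-kernel estimate;
\item finishes with the Gaussian union bound over the $2^{kd}$ prefixes.
\end{enumerate}
This is exactly what you describe.

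Two points need tightening. First, no splitting of the spectrum into $\{f\ge\sigma_\infty^2/2\}$ and the rest is needed. Since $\sqrt f\ge0$ and $\sigma_\infty=\sqrt{f(0)}>0$, the bound
\[
(\sqrt f-\sigma_\infty)^2=\frac{(f-f(0))^2}{(\sqrt f+\sigma_\infty)^2}\le\sigma_\infty^{-2}\bigl(f-f(0)\bigr)^2
\]
holds on all of $\mathbb{T}^d$, zeros of $f$ included. So only the modulus of continuity of $f$ at the origin enters.

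Second, that modulus comes from the covariance tail $\sum_{|h|_\infty>m}|\gamma(h)|\lesssim(1+m)^{-A}$. Independence of $X^{[s]}_0$ and $X^{[s]}_h$ by itself gives only a pointwise bound on $\gamma(h)$, which is not summable. Instead, combine the projection bound $|\gamma(h)|\le\sum_u\delta_{-u,2}\delta_{h-u,2}$ with $\sum_{|i|_\infty>m}\delta_{i,2}\le\Delta^{[m]}_p$, as the paper does. The latter holds because $X^{[m]}_i$ does not involve $\eta_0$ when $|i|_\infty>m$.

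The time-domain route has a genuine gap, exactly at the step you flagged. The decay of the Fourier coefficients of $\sqrt f$ is dictated by the zeros of $f$, which the hypotheses do not control. So the transfer $\tau(r)\lesssim r^{-A}$ is false in general. Take $X_i=\eta_i+\eta_{i-e_1}$ with standard Gaussian innovations and $e_1=(1,0,\ldots,0)$. This field has range one, so the residual-rate condition holds for every $A$. Yet $\sqrt f=2|\cos(\lambda_1/2)|$ has $|c_{(s_1,0,\ldots,0)}|\asymp s_1^{-2}$, hence $\tau(r)\asymp r^{-1}$.

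For rougher densities with zeros, you have no a priori reason even for $\sum_s|c_s|<\infty$. Yet this is needed already to write $\sum_sb_s=0$ and $\beta_R(j)=-\sum_{i\notin R}b_{i-j}$. Wiener--L\'evy applies only where $f$ stays away from zero. Even then, the standard weighted version, with weights $(1+|s|)^{A'}$ for $A'<A$, loses a power of $L$ when $A\le1/2$.

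The spectral argument avoids all of this. It needs regularity of $f$ at the origin only, never of $\sqrt f$.
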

The proof is given in Appendix~\ref{sec:proof-spectral}. For each $Q\in\cQ$, define the probability measure
\[
\rho_Q:=\Lcal_{\IP^{\mathrm{sp}}}\big((G_i^{\mathrm{sp}})_{i\in Q},(Y_i^{\mathrm{sp}})_{i\in Q}\big)
\ \text{on }\R^Q\times \R^Q.
\]
Its marginals are $\nu_Q$ and $N(0,I_Q)$; in particular, its first marginal coincides with the second marginal of $\pi_Q$, which is precisely what allows the two couplings to be linked in \S\ref{ssc:gluing}. As in \S\ref{ssc:local-GA}, only the law $\rho_Q$ enters the subsequent argument, and the stationary fields of Theorem~\ref{thm:spectral} serve merely to construct it. On the common probability space of \S\ref{ssc:proof-main}, let $\BY_Q=(Y_{Q,i})_{i\in Q}$ be such that $(\bG_Q,\BY_Q)$ has law $\rho_Q$, and write $W_Q:=\max_{R\preceq Q}|\sum_{i\in R}(G_{Q,i}-\sigma_\infty Y_{Q,i})|$. Theorem~\ref{thm:spectral} along with $|\cQ_k|\le 2^d$ instructs that
\[ \sum_{k=0}^{\infty}\tilde\IP\Big(\max_{Q\in\cQ_k}W_Q>C\,w_A(k)\Big)\le 2^{d+1}\sum_{k=0}^\infty 4^{-k}<\infty,\]
which, via the Borel--Cantelli lemma, implies that
\begin{equation}\label{eq:WQ-as}
  \max_{Q\in\cQ_k}W_Q
  =O_{\as}\!\Big(w_A(k)\Big), \   k\to\infty.
\end{equation}
This completes the list of local error bounds, alongside \eqref{eq:boundary-as} and \eqref{eq:VQ-as}. For every $A>0$, $w_A(k)$ has the same polynomial order as the block-replacement error $u_A(k)$ in \eqref{eq:Prop-1}, namely $2^{k(d-1)/2}$ when $A>1/2$. It remains to sum the three local errors over the blocks meeting $[1,n]^d$, which we do next.
\subsection{Aggregation}\label{ssc:proof-main}
Each of the preceding three sections produced a local error bound on the cells of a single dyadic shell: the block-replacement error \eqref{eq:boundary-as}, the local Gaussian approximation error \eqref{eq:VQ-as} and the dependent-to-independent Gaussian comparison error \eqref{eq:WQ-as}, of orders $u_A(k)\sqrt{k+1}$, $b_k$ and $w_A(k)$, respectively. In this section, we sum these errors over the cells meeting $[1,n]^d$. As announced in the outline of Section~\ref{sec:main-proof}, we take for granted a common probability space $(\tilde\Omega,\tilde\Fcal,\IP)$ on which the original field, the surrogate blocks $\BX_Q$, and the vectors $\bG_Q$ and $\BY_Q$ are defined simultaneously, with $(\BX_Q,\bG_Q)\sim\pi_Q$ and $(\bG_Q,\BY_Q)\sim\rho_Q$ for every $Q\in\cQ$. Two features of the blocking make the aggregation straightforward. First, by \eqref{eq:shells}, the cube $[1,n]^d$ is contained in $[2^K]^d$, $K:=\lceil\log_2 n\rceil$, which is the disjoint union of the cells in $\cQ_0,\ldots,\cQ_{K-1}$; each $\cQ_k$ contains at most $2^d$ cells, and every prefix rectangle $[1,m]$ intersects each cell in a lower-left sub-rectangle. Hence, by \eqref{eq:boundary-as}, \eqref{eq:VQ-as} and \eqref{eq:WQ-as}, almost surely,
\[
\max_{m\le n}|S_X(m)-\sigma_\infty S_Y(m)|\lesssim\sum_{k=0}^{K-1}\big(u_A(k)\sqrt{k+1}+b_k+w_A(k)\big).
\]
Second, we  choose the summable sequence $\{x_k\}$ in \eqref{eq:b_k} suitably to ensure all  three errors: $u_A(k)\sqrt{k+1}$, $b_k$ and $w_A(k)$ match the powers of $2^k$, the side length of the cells in $\cQ_k$. The sum over $k\le K-1$ is therefore $\lesssim$ its last term, which corresponds to cells of side length $2^{K-1}$, comparable to $n$. Consequently, the polynomial exponent in \eqref{eq:main-rate} arises as the largest of the exponents of the three local errors. The remainder of this section fixes $\{x_k\}$ and codifies the above strategy.

\subsubsection{Step 1}  Let $n\ge 2$, and let $K:=\lceil\log_2 n\rceil$, so that $2^{K-1}<n\le 2^K$. For any cell $Q \in \cQ_k$ for some $k\in \{0,1,\ldots, K-1\}$, let $R:=[1,m]\cap Q\preceq Q$. Observe that
\[\sum_{i\in R}(X_i-\sigma_\infty Y_i)=\sum_{i\in R}(X_i-X^Q_i)+\sum_{i\in R}(X^Q_i-G_{Q,i})+\sum_{i\in R}(G_{Q,i}-\sigma_\infty Y_i).\]
Recall $U_Q$, $V_Q$ and $W_Q$ from \S\ref{ssc:blocking}, \S\ref{ssc:local-GA} and \S\ref{ssc:spectral}. Immediately, \eqref{eq:boundary-as}, \eqref{eq:VQ-as} and \eqref{eq:WQ-as} let us write
\begin{equation}\label{eq:local-as}
   \max_{m\le n}|S_X(m)-\sigma_\infty S_Y(m)| \le  \sum_{k=0}^{K-1}\sum_{Q\in\cQ_k}(U_Q+V_Q+W_Q) = O_{\text{a.s.}}(\sum_{k=0}^{K-1}\varpi_k),\  k\ge 0,
\end{equation}
where 
\[\varpi_k:=u_A(k)\sqrt{k+1}+b_k+w_A(k), \]
with $u_A(k)$ as in \eqref{eq:Prop-1}, $b_k$ from \eqref{eq:b_k} and $w_A(k)$ as in \eqref{eq:wA}. Note that each of the three sums in $\varpi_k$ is bounded by a term of the form $2^{k\alpha}\varphi(k)$, with $\alpha>0$ and $\varphi(k)=(k+1)^{a}\{\log(k+3)\}^{b}$, $a,b\ge0$. Therefore, for each of these terms,
\begin{align}\label{eq:geom-sum}
    \sum_{k=0}^{K-1}2^{k\alpha}\varphi(k)\le \varphi(K)\sum_{k=0}^{K-1}2^{k\alpha}\lesssim 2^{K\alpha}\varphi(K)\lesssim n^{\alpha}(\log n)^a(\log\log n)^b,\   n\ge 16.
\end{align}
\subsubsection{Step 2.}
It remains to identify the exponents $\alpha$, $a$ and $b$ for each of the three local errors. Let $c:=d(1-2/r)$, and $\beta=\max\{d/p,(d-1)/2\}$. Observe that, by the assumption of Theorem~\ref{thm:main},
\begin{equation}\label{eq:beta-identity}
    d-2\beta=\min\{d(1-2/p),1\}<A.
\end{equation}
If $\alpha<\beta$, then $n^{\alpha}(\log n)^a(\log\log n)^b=O(n^\beta)$ for all $a,b\ge0$; hence $a$ and $b$ matter only when $\alpha=\beta$. In every case below, $\alpha>0$. Fix
\begin{equation}
    x_k:=\frac14\big[(k+1)\log(k+3)\,(\log\log(k+16))^2\big]^{-1},\  k\ge0. \nonumber
\end{equation}
Then $x_k\in(0,1/4]$, $\sum_k x_k<\infty$, and, for every $\eps>0$,
\begin{equation}\label{eq:xk-prop}
    x_k^{-3/2}\lesssim (k+1)^{3/2}(\log(k+3))^{3/2+\eps},\  k\ge0.
\end{equation}
\begin{enumerate}[label=(\roman*)]
    \item \emph{Block-replacement error.} By \eqref{eq:Prop-1},
    \[
    u_A(k)\sqrt{k+1}=
    \begin{cases}
        2^{k(d-1)/2}(k+1)^{1/2}, & A>1/2,\\
        2^{k(d-1)/2}(k+1)^{3/2}, & A=1/2,\\
        2^{k(d/2-A)}(k+1)^{1/2}, & A<1/2.
    \end{cases}
    \]
    Since $(d-1)/2\ge d/2-A$ if and only if $A\ge1/2$, \eqref{eq:geom-sum} yields
   \[
\sum_{k=0}^{K-1}u_A(k)\sqrt{k+1} \lesssim \kappa_n^{\mathrm{rep}},\
\kappa_n^{\mathrm{rep}}:=n^{\max\{(d-1)/2,\,d/2-A\}}(\log n)^{3/2}.
\]
    \item \emph{Local Gaussian error.} By \eqref{eq:b_k} and \eqref{eq:xk-prop},
    \[
    b_k \lesssim \, 2^{kd/r}\nu_A(k)^{1/2}(k+1)^{3d/2+5/2-1/r}(\log(k+3))^{3/2+\eps},
    \]
    where 
    \[    \nu_A(k)\lesssim \begin{cases}
        1, & A>c,\\
        k+1, & A=c,\\
        2^{k(c-A)}, & A<c.
    \end{cases}\]
   Consequently, \eqref{eq:geom-sum} yields $\sum_{k=0}^{K-1}b_k\lesssim \kappa_n^{\mathrm{GA}}$ with
    \[
\kappa_n^{\mathrm{GA}}:=
\begin{cases}
    n^{d/r}(\log n)^{3d/2+5/2-1/r}(\log\log n)^{3/2+\eps}, & A>c,\\
    n^{d/r}(\log n)^{3d/2+3-1/r}(\log\log n)^{3/2+\eps}, & A=c,\\
    n^{(d-A)/2}(\log n)^{3d/2+5/2-1/r}(\log\log n)^{3/2+\eps}, & A<c.
\end{cases}
\]
    \item \emph{Dependent-to-independent Gaussian comparison error.} By \eqref{eq:Prop-1} and \eqref{eq:wA}, $w_A(k)\le u_A(k)\sqrt{k+1}$ for every $k\ge0$. Consequently, by (i), $\sum_{k=0}^{K-1}w_A(k)\lesssim \kappa_n^{\mathrm{rep}}$.
\end{enumerate}
Collecting (i)--(iii), almost surely, for all $n\ge16$,
\[
\max_{m\le n}|S_X(m)-\sigma_\infty S_Y(m)|\lesssim \max\{\kappa_n^{\mathrm{rep}},\kappa_n^{\mathrm{GA}}\}.
\]
Now we control $\max\{\kappa_n^{\mathrm{rep}},\kappa_n^{\mathrm{GA}}\}$ by $a_n$ from \eqref{eq:a-n}. We first consider the polynomial exponents $(d-1)/2, d/2-A, d/r$ and $(d-A)/2$. By \eqref{eq:beta-identity}, $A>d-2\beta\ge(d-2\beta)/2$, and therefore, $$\max\{d/2-A,  (d-A)/2\}<\beta.$$ For the other two exponents,
\[
\max\{(d-1)/2,\,d/r\}=\max\{(d-1)/2,\,d/p\}=\beta,
\]
since $r=p$ for $p\le4$, while $d/p<d/4\le(d-1)/2$ for $p>4$. Hence we recover the polynomial exponent from $a_n$ through $\max\{\kappa_n^{\mathrm{rep}},\kappa_n^{\mathrm{GA}}\}$. Moreover, when $A=c$, since $\min\{d(1-2/p),1\}\ge\min\{c,1\}$, \eqref{eq:beta-identity} forces $c>1$, i.e.\ $d/r<(d-1)/2\le\beta$. Therefore, in this case, $\kappa_n^{GA}=O(n^\beta)$.

We next consider the logarithmic factors, focusing on the case $A\ne c$. The maximum logarithmic factor in $\kappa_n^{\mathrm{GA}}$ is $(\log n)^{3d/2+5/2-1/r}(\log\log n)^{3/2+\eps}$, which dominates the factor $(\log n)^{3/2}$ of $\kappa_n^{\mathrm{rep}}$. Consequently,
\[
\max\{\kappa_n^{\mathrm{rep}},\kappa_n^{\mathrm{GA}}\}\lesssim n^{\beta}(\log n)^{3d/2+5/2-1/r}(\log\log n)^{3/2+\eps}\le a_n,
\]
where the last step uses $1/r\ge1/p$, with equality if and only if $p\le4$. Therefore, almost surely, for all $n\ge16$,
\[
\max_{m\le n}|S_X(m)-\sigma_\infty S_Y(m)|\lesssim a_n.
\]
Collecting (i)-(iii), almost surely, for all $n\ge16$,
\[\max_{m\le n}|S_X(m)-\sigma_\infty S_Y(m)|\lesssim n^{\beta}(\log n)^{3d/2+5/2-1/p}(\log\log n)^{3/2+\eps}.\]

Since $\eps>0$ is arbitrary and $(Y_i)$ does not depend on $\eps$, this proves \eqref{eq:main-rate}, provided that \eqref{eq:boundary-as}, \eqref{eq:VQ-as} and \eqref{eq:WQ-as} hold simultaneously on one probability space. Thus far, we have taken such a space for granted. The blocking error $U_Q$ compares the original field with its surrogate blocks on $(\Omega_0,\Fcal_0,\IP_0)$, whereas $V_Q$ and $W_Q$ are, at this stage, defined only through the coupling laws $\pi_Q$ and $\rho_Q$. In \S\ref{ssc:gluing}, we link $\pi_Q$ and $\rho_Q$ along their common Gaussian marginal $\nu_Q$, take the product of the resulting three measures over $Q\in\cQ$, and transfer that product measure to the prescribed surrogate blocks $\BX_Q$. The resulting extension $(\tilde\Omega,\tilde\Fcal,\tilde\IP)$ carries the original field, the surrogate blocks, and the vectors $\bG_Q$ and $\BY_Q$ simultaneously, with the pair laws prescribed in \eqref{eq:final-laws}. Since each Borel--Cantelli argument above involves only these pair laws, together with $\IP_0$ for \eqref{eq:boundary-as}, all three almost sure bounds hold on this single space. 

\subsection{Constructing a common probability space}\label{ssc:gluing}
This section turns the abstract coupling laws of \S\ref{ssc:local-GA}--\ref{ssc:spectral} into random variables defined on a single extension of the original probability space. Recall that $(\Omega_0,\Fcal_0,\IP_0)$ from \S\ref{ssc:blocking} carries (a) the original $\eta_j$'s and its i.i.d. copies $\{\eta^{(Q)}_j\}$; and (b) the independent surrogate blocks $\BX_Q$, $Q\in\cQ$.

For $Q\in\cQ$,  recall 
\begin{equation}
    \mu_Q:=\Lcal((X_i)_{i\in Q})=\Lcal_{\IP_0}(\BX_Q),\  \nu_Q:=N(0,\Sigma_Q). \nonumber
\end{equation}
We combine the twin families of probability measures on $\R^Q\times \R^Q$ introduced in \S\ref{ssc:local-GA} - \S\ref{ssc:spectral}: 
\begin{itemize}
    \item the couplings $\pi_Q$, with marginals $\mu_Q$ and $\nu_Q$;
    \item the couplings $\rho_Q$, with marginals $\nu_Q$ and $N(0,I_Q)$.
\end{itemize}
This \textit{linking} is provided by the Vorob'ev--Berkes--Philipp coupling \cite{dudley1999uniform}, restated as Lemma \ref{lem:gluing}; in particular, for each $Q\in\cQ$, we obtain a probability measure $\tau_Q$ on $(\R^Q)^{\otimes 3}$ such that, with $(x_Q,g_Q,y_Q)$ denoting the coordinate map,
\begin{equation}
    \Lcal_{\tau_Q}(x_Q,g_Q)=\pi_Q,\  \Lcal_{\tau_Q}(g_Q,y_Q)=\rho_Q. \nonumber
\end{equation}
At this stage, $\tau_Q$ is still an abstract measure, and its first coordinate $x_Q$ is not yet tied to the block $\BX_Q$, which lives on $(\Omega_0,\Fcal_0,\IP_0)$. To tie them, we first consider the product of measures $\tau_Q$ over $Q\in\cQ$, and then lift the product onto the space $(\Omega_0,\Fcal_0,\IP_0)$, so that the $x$-coordinates are realized by the blocks $\BX_Q$ themselves, while the $(g,y)$-coordinates are attached on an extension. 

To that end, we proceed in three steps.
\begin{enumerate}[label=(\roman*)]
    \item \emph{Product-measure marginals.} Since $\cQ$ is countable, the product measure $\tau:=\bigotimes_{Q\in\cQ}\tau_Q$ exists on $\mathbb{G}\times\mathbb{H}$, where
    \[\mathbb{G}:=\prod_{Q\in\cQ}\R^Q,\  \mathbb{H}:=\prod_{Q\in\cQ}(\R^Q\times \R^Q)\]
    store the $x$-coordinates and the $(g,y)$-coordinates, respectively. The $\mathbb{G}$-marginal of $\tau$ is $\bigotimes_Q\mu_Q$. Let $\BX:=(\BX_Q)_{Q\in\cQ}$, where $\BX_Q=(X_i^Q)_{i\in Q}$ are the surrogate blocks from \eqref{eq:Q-blocks}. By \eqref{eq:block-independence}, $\Lcal_{\IP_0}(\BX)=\bigotimes_Q\mu_Q$. 
    \item \emph{Lifting of measure.} By (ii), Lemma \ref{lem:transfer} applies to $\BX$, $\mathbb{G}$, $\mathbb{H}$ and $\tau$. It yields a probability measure $\tilde\IP$ on $(\tilde\Omega,\tilde\Fcal):=(\Omega_0\times\mathbb{H},\Fcal_0\otimes\mathcal{B}(\mathbb{H}))$ (whose projection onto $(\Omega_0,\Fcal_0)$ is $\IP_0$) such that $\Lcal_{\tilde\IP}(\BX,Z)=\tau$, where $Z=((\bG_Q,\BY_Q))_{Q\in\cQ}$ denotes the $\mathbb{H}$-coordinate. Here $\bG_Q=(G_{Q,i})_{i\in Q}$ and $\BY_Q=(Y_{Q,i})_{i\in Q}$ are the Gaussian vectors entering $V_Q$ and $W_Q$ in \S\ref{ssc:local-GA} and \S\ref{ssc:spectral}, now realized on the same space as the original field. By construction,
    \begin{equation}\label{eq:final-laws}
        \Lcal_{\tilde\IP}(\BX_Q,\bG_Q)=\pi_Q,\  \Lcal_{\tilde\IP}(\bG_Q,\BY_Q)=\rho_Q,
    \end{equation}
    and the triples $(\BX_Q,\bG_Q,\BY_Q)$, $Q\in\cQ$, are mutually independent.
    \item \emph{White noise on $\Z^d$.} Define
    \[Y_i:=Y_{Q,i}\  \text{for the unique } Q\in\cQ \text{ containing } i,\  i\in\N^d.\]
    Since $\BY_Q\sim N(0,I_Q)$ are independent across $Q$, $(Y_i)_{i\in\N^d}$ are i.i.d.\ $N(0,1)$. By one further product extension, we append an independent i.i.d.\ $N(0,1)$ family $(Y_i)_{i\in\Z^d\setminus\N^d}$, and maintain the notation $(\tilde\Omega,\tilde\Fcal,\tilde\IP)$.
\end{enumerate}
In particular, $(\tilde\Omega,\tilde\Fcal,\tilde\IP)$ is the common probability space taken for granted in \S\ref{ssc:blocking}--\S\ref{ssc:proof-main}. Since the projection of $\tilde\IP$ onto $\Omega_0$ is $\IP_0$, and that of $\IP_0$ onto $\Omega$ is $\IP$, every event determined by the original field $(X_i)_{i\in\Z^d}$ maintains its original probability measure. 
\section*{AI usage statement\texorpdfstring{\protect\footnotemark}{}}\footnotetext{\href{https://leidendeclaration.ai/}{https://leidendeclaration.ai/}}
The authors acknowledge the use of ChatGPT-Pro and Claude Opus in preparing this paper. Based on concrete mathematical arguments provided by the authors, ChatGPT-Pro produced an initial version of the proof, which the authors subsequently modified and simplified significantly. Through repeated interactions, the models were also used to sharpen some of the auxiliary arguments and to polish the exposition. In particular, ChatGPT helped the authors locate the earlier results of \cite{moricz1977moment} and the troika of works \cite{rio1993a}, \cite{rio1993b} and \cite{rio1997}. The authors take full responsibility for the veracity of all claims and technical details in the paper.

\renewcommand{\bibfont}{\small}
\bibliographystyle{plainnat}
\bibliography{references}

@article{strassen1964invariance,
  author = {Strassen, Volker},
  title = {An invariance principle for the law of the iterated logarithm},
  journal = {Zeitschrift f{\"u}r Wahrscheinlichkeitstheorie und Verwandte Gebiete},
  volume = {3}, pages = {211--226}, year = {1964},
}

@article{kmt,
  author = {Koml{\'o}s, J{\'a}nos and Major, P{\'e}ter and Tusn{\'a}dy, G{\'a}bor},
  title = {An approximation of partial sums of independent {RV}'-s, and the sample {DF}. {I}},
  journal = {Zeitschrift f{\"u}r Wahrscheinlichkeitstheorie und Verwandte Gebiete},
  volume = {32}, pages = {111--131}, year = {1975},
}

@article{kmt1976,
  author = {Koml{\'o}s, J{\'a}nos and Major, P{\'e}ter and Tusn{\'a}dy, G{\'a}bor},
  title = {An approximation of partial sums of independent {RV}'s, and the sample {DF}. {II}},
  journal = {Zeitschrift f{\"u}r Wahrscheinlichkeitstheorie und Verwandte Gebiete},
  volume = {34}, pages = {33--58}, year = {1976},
}

@book{csorgo1981strong,
  author = {Cs{\"o}rg{\H o}, Mikl{\'o}s and R{\'e}v{\'e}sz, P{\'a}l},
  title = {Strong Approximations in Probability and Statistics},
  publisher = {Academic Press},
  address = {New York}, year = {1981},
  isbn = {9780121985400},
}

@book{philipp1975almost,
  author = {Philipp, Walter and Stout, William},
  title = {Almost Sure Invariance Principles for Partial Sums of Weakly Dependent Random Variables},
  series = {Memoirs of the American Mathematical Society},
  volume = {2}, note = {No. 161},
  publisher = {American Mathematical Society},
  address = {Providence, RI}, year = {1975},
}

@article{berkes1979approximation,
  author = {Berkes, Istv{\'a}n and Philipp, Walter},
  title = {Approximation theorems for independent and weakly dependent random vectors},
  journal = {The Annals of Probability},
  year = {1979}, volume = {7}, number = {1}, pages = {29--54},
}

@article{wu2005nonlinear,
  author = {Wu, Wei Biao},
  title = {Nonlinear system theory: Another look at dependence},
  journal = {Proceedings of the National Academy of Sciences},
  year = {2005}, volume = {102}, number = {40}, pages = {14150--14154},
}

@article{wu2007strong,
  author = {Wu, Wei Biao},
  title = {Strong invariance principles for dependent random variables},
  journal = {The Annals of Probability},
  year = {2007}, volume = {35}, number = {6}, pages = {2294--2320},
}

@article{liu2009strong,
  author = {Liu, Weidong and Lin, Zhengyan},
  title = {Strong approximation for a class of stationary processes},
  journal = {Stochastic Processes and their Applications},
  volume = {119}, number = {1}, pages = {249--280}, year = {2009},
}

@article{berkes2014kmt,
  author = {Berkes, Istv{\'a}n and Liu, Weidong and Wu, Wei Biao},
  title = {{Koml{\'o}s--Major--Tusn{\'a}dy} approximation under dependence},
  journal = {The Annals of Probability},
  year = {2014}, volume = {42}, number = {2}, pages = {794--817},
}

@article{wuzhou2011gaussian,
  author = {Wu, Wei Biao and Zhou, Zhou},
  title = {{Gaussian} approximations for non-stationary multiple time series},
  journal = {Statistica Sinica},
  year = {2011},
  volume = {21},
  number = {3},
  pages = {1397--1413},
}

@article{karmakar2020optimal,
  author = {Karmakar, Sayar and Wu, Wei Biao},
  title = {Optimal {Gaussian} approximation for multiple time series},
  journal = {Statistica Sinica},
  year = {2020}, volume = {30}, number = {3}, pages = {1399--1417},
}

@article{bonnerjee2024gaussian,
  author = {Bonnerjee, Soham and Karmakar, Sayar and Wu, Wei Biao},
  title = {{Gaussian} approximation for nonstationary time series with optimal rate and explicit construction},
  journal = {The Annals of Statistics},
  year = {2024}, volume = {52}, number = {5}, pages = {2293--2317},
}

@misc{bonnerjee2026fast,
  author = {Bonnerjee, Soham and Karmakar, Sayar and Michailidis, George},
  title = {Fast localization of anomalous patches in spatial data under dependence},
  year = {2026},
  howpublished = {arXiv:2603.27546},
  eprint = {2603.27546}, archivePrefix = {arXiv}, primaryClass = {stat.ME},
}

@article{morrow1986invariance,
  author = {Morrow, Gregory J. and Philipp, Walter},
  title = {Invariance principles for partial sum processes and empirical processes indexed by sets},
  journal = {Probability Theory and Related Fields},
  year = {1986}, volume = {73}, number = {1}, pages = {11--42},
}

@article{rio1993a,
  author = {Rio, Emmanuel},
  title = {Strong approximation for set-indexed partial sum processes via {KMT} constructions {I}},
  journal = {The Annals of Probability},
  year = {1993}, volume = {21}, number = {2}, pages = {759--790},
}

@article{rio1993b,
  author = {Rio, Emmanuel},
  title = {Strong approximation for set-indexed partial-sum processes, via {KMT} constructions {II}},
  journal = {The Annals of Probability},
  year = {1993}, volume = {21}, number = {3}, pages = {1706--1727},
}

@article{rio1997,
  author = {Rio, Emmanuel},
  title = {Strong approximation for set-indexed partial sum processes via {KMT} constructions {III}},
  journal = {ESAIM: Probability and Statistics},
  year = {1997}, volume = {1}, pages = {319--338},
}

@article{berkes1981mixing,
  author = {Berkes, Istv{\'a}n and Morrow, Gregory J.},
  title = {Strong invariance principles for mixing random fields},
  journal = {Zeitschrift f{\"u}r Wahrscheinlichkeitstheorie und Verwandte Gebiete},
  year = {1981}, volume = {57}, number = {1}, pages = {15--37},
}

@article{berkes1984gaussian,
  author = {Berkes, Istv{\'a}n},
  title = {{Gaussian} approximation of mixing random fields},
  journal = {Acta Mathematica Hungarica},
  year = {1984}, volume = {43}, number = {1--2}, pages = {153--185},
}

@incollection{strittmatter1990strong,
  author = {Strittmatter, Wolfram},
  title = {Strong approximations for set-indexed partial-sum processes and empirical processes of mixing random fields},
  booktitle = {Probability in Banach Spaces 7},
  editor = {Eberlein, Ernst and Kuelbs, James and Marcus, Michael B.},
  series = {Progress in Probability}, volume = {21}, pages = {201--268},
  publisher = {Birkh{\"a}user}, address = {Boston}, year = {1990},
}

@article{balan2005strong,
  author = {Balan, Raluca M.},
  title = {A strong invariance principle for associated random fields},
  journal = {The Annals of Probability},
  year = {2005}, volume = {33}, number = {2}, pages = {823--840},
}

@article{shashkin2007strong,
  author = {Shashkin, Alexey P.},
  title = {Strong {Gaussian} approximation of an associated random field},
  journal = {Russian Mathematical Surveys},
  year = {2007}, volume = {62}, number = {5}, pages = {1012--1014},
}

@incollection{bulinski2006strong,
  author = {Bulinski, Alexander and Shashkin, Alexey},
  title = {Strong invariance principle for dependent random fields},
  booktitle = {Dynamics \& Stochastics},
  series = {Institute of Mathematical Statistics Lecture Notes--Monograph Series},
  volume = {48}, pages = {128--143}, year = {2006},
  publisher = {Institute of Mathematical Statistics},
}

@article{cai2011strong,
  author = {Cai, Guang-hui},
  title = {A strong invariance principle for negatively associated random fields},
  journal = {Czechoslovak Mathematical Journal},
  year = {2011}, volume = {61}, number = {1}, pages = {27--40},
}

@article{bolthausen1982central,
  author = {Bolthausen, Erwin},
  title = {On the central limit theorem for stationary mixing random fields},
  journal = {The Annals of Probability},
  year = {1982}, volume = {10}, number = {4}, pages = {1047--1050},
}

@article{dedecker1998central,
  author = {Dedecker, J{\'e}r{\^o}me},
  title = {A central limit theorem for stationary random fields},
  journal = {Probability Theory and Related Fields},
  year = {1998}, volume = {110}, number = {3}, pages = {397--426},
}

@article{dedecker2001exponential,
  author = {Dedecker, J{\'e}r{\^o}me},
  title = {Exponential inequalities and functional central limit theorems for random fields},
  journal = {ESAIM: Probability and Statistics},
  year = {2001}, volume = {5}, pages = {77--104},
}

@article{wang2013new,
  author = {Wang, Yizao and Woodroofe, Michael},
  title = {A new condition for the invariance principle for stationary random fields},
  journal = {Statistica Sinica},
  year = {2013}, volume = {23}, number = {4}, pages = {1673--1696},
}

@article{el2013central,
  author = {El Machkouri, Mohamed and Voln{\'y}, Dalibor and Wu, Wei Biao},
  title = {A central limit theorem for stationary random fields},
  journal = {Stochastic Processes and their Applications},
  year = {2013}, volume = {123}, number = {1}, pages = {1--14},
}

@article{volny2014hannan,
  author = {Voln{\'y}, Dalibor and Wang, Yizao},
  title = {An invariance principle for stationary random fields under {Hannan}'s condition},
  journal = {Stochastic Processes and their Applications},
  year = {2014}, volume = {124}, number = {12}, pages = {4012--4029},
}

@article{giraudo2018invariance,
  author = {Giraudo, Davide},
  title = {Invariance principle via orthomartingale approximation},
  journal = {Stochastics and Dynamics},
  year = {2018}, volume = {18}, number = {6}, pages = {1850043},
}

@article{cuny2026weak,
  author = {Cuny, Christophe and Dedecker, J{\'e}r{\^o}me and Merlev{\`e}de, Florence},
  title = {On the weak invariance principle for random fields with commuting filtrations under {$\mathbb{L}^{1}$}-projective criteria},
  journal = {Electronic Journal of Probability},
  year = {2026}, volume = {31}, pages = {1--37}, note = {Paper No. 1},
}

@article{volny2017erratum,
  author = {Voln{\'y}, Dalibor and Wang, Yizao},
  title = {Erratum to {An invariance principle for stationary random fields under Hannan's condition} [{Stochastic Process. Appl.} 124(12) (2014) 4012--4029]},
  journal = {Stochastic Processes and their Applications},
  year = {2017}, volume = {127}, number = {6}, pages = {2088--2091},
}

@article{moricz1977moment,
  author = {M{\'o}ricz, Ferenc},
  title = {Moment inequalities for the maximum of partial sums of random fields},
  journal = {Acta Scientiarum Mathematicarum},
  year = {1977}, volume = {39}, number = {3--4}, pages = {353--366},
}

@article{cattaneo2025yurinskii,
  author = {Cattaneo, Matias D. and Masini, Ricardo P. and Underwood, William G.},
  title = {{Yurinskii}'s coupling for martingales},
  journal = {The Annals of Statistics},
  year = {2025}, volume = {53}, number = {5}, pages = {2179--2203},
}

@incollection{burkholder1988sharp,
  author    = {Burkholder, Donald L.},
  title     = {Sharp inequalities for martingales and stochastic integrals},
  booktitle = {Colloque Paul L\'evy sur les processus stochastiques},
  series    = {Ast\'erisque},
  number    = {157--158},
  pages     = {75--94},
  year      = {1988},
  publisher = {Soci\'et\'e math\'ematique de France},
}

@book{dudley1999uniform,
  author = {Dudley, Richard M.},
  title = {Uniform Central Limit Theorems},
  series = {Cambridge Studies in Advanced Mathematics}, volume = {63},
  publisher = {Cambridge University Press}, address = {Cambridge}, year = {1999}}

@article{vorobev1962consistent,
  author = {Vorob'ev, Nikolai N.},
  title = {Consistent families of measures and their extensions},
  journal = {Theory of Probability and its Applications}, volume = {7}, pages = {147--163}, year = {1962}}

@book{kallenberg2002foundations,
  author = {Kallenberg, Olav},
  title = {Foundations of Modern Probability},
  edition = {Second}, series = {Probability and its Applications},
  publisher = {Springer}, address = {New York}, year = {2002}}

@book{ibragimov1978gaussian,
  author = {Ibragimov, Il'dar A. and Rozanov, Yurii A.},
  title = {Gaussian Random Processes},
  series = {Applications of Mathematics}, volume = {9},
  publisher = {Springer}, address = {New York}, year = {1978}}

@article{tropp2012user,
  title={User-friendly tail bounds for sums of random matrices},
  author={Tropp, Joel A},
  journal={Foundations of computational mathematics},
  volume={12},
  number={4},
  pages={389--434},
  year={2012},
  publisher={Springer}
}

@article{Moricz1979MultiparameterII,
  author  = {M{\'o}ricz, Ferenc},
  title   = {Multiparameter strong laws of large numbers. {II}
             (Higher order moment restrictions)},
  journal = {Acta Scientiarum Mathematicarum (Szeged)},
  volume  = {41},
  year    = {1979},
  pages   = {339--349}
}

@article{Moricz1983Rectangular,
  author  = {M{\'o}ricz, Ferenc},
  title   = {A general moment inequality for the maximum of the rectangular
             partial sums of multiple series},
  journal = {Acta Mathematica Hungarica},
  volume  = {41},
  number  = {3--4},
  year    = {1983},
  pages   = {337--346},
}

@article{christofides1990maximal,
  title={Maximal inequalities for multidimensionally indexed submartingale arrays},
  author={Christofides, Tasos C and Serfling, Robert J},
  journal={The Annals of Probability},
  volume={18},
  number={2},
  pages={630--641},
  year={1990},
  publisher={Institute of Mathematical Statistics}
}

@article{bretagnolle1989hungarian,
  title={Hungarian constructions from the nonasymptotic viewpoint},
  author={Bretagnolle, Jean and Massart, Pascal},
  journal={The Annals of Probability},
  volume={17},
  number={1},
  pages={239--256},
  year={1989},
  publisher={Institute of Mathematical Statistics}
}

@article{hirsch1995potential,
  title={Potential theory related to some multiparameter processes},
  author={Hirsch, Francis},
  journal={Potential Analysis},
  volume={4},
  number={3},
  pages={245--267},
  year={1995},
  publisher={Springer}
}

@incollection{walsh2006martingales,
  title={Martingales with a multidimensional parameter and stochastic integrals in the plane},
  author={Walsh, John B},
  booktitle={Lectures in Probability and Statistics: Lectures given at the Winter School in Probability and Statistics held in Santiago de Chile},
  pages={329--491},
  year={1986},
  publisher={Springer}
}

@article{Cairoli1970,
author = {Cairoli, Renzo},
journal = {Séminaire de probabilités de Strasbourg},
language = {fre},
pages = {1-27},
publisher = {Springer - Lecture Notes in Mathematics},
title = {Une inégalité pour martingales à indices multiples et ses applications},
volume = {4},
year = {1970},
}
\newpage
\appendix
\section*{Organization of the appendix}
This appendix contains the deferred proofs and other auxiliary results. In particular, Appendix~\ref{sec:proof-GA} proves the local Gaussian approximation in Theorem~\ref{thm:GA}.  Moving on, Appendix~\ref{sec:proof-spectral} proves the spectral comparison in Theorem~\ref{thm:spectral}, by realizing the stationary Gaussian field and the white noise through a common spectral white noise. Subsequently, Appendix~\ref{sec:helper-results} collects the auxiliary results, modularized according to their use: \S\ref{ssc:aux-main}, \S\ref{ssc:aux-GA} and \S\ref{ssc:aux-spectral} contain those used in the proofs of Theorems~\ref{thm:main}, \ref{thm:GA} and \ref{thm:spectral}, respectively, while \S\ref{ssc:aux-additional} contains moment and maximal inequalities used throughout. Finally, Appendix~\ref{sec:helper-proofs} contains the proofs of the auxiliary results.

\section{Proof of Theorem \ref{thm:GA}}\label{sec:proof-GA}
We prove Theorem~\ref{thm:GA} in the following steps.
\begin{itemize}
        \item \textbf{Haar coordinates} (\S\ref{subsec:haar}): as in the dyadic constructions of \cite{bretagnolle1989hungarian, rio1997}, we work with the scaled tensor Haar transform $\HA$ on $\R^Q$, whose coordinates $(\HA x)_\lambda=\sum_{i\in Q_\lambda}\pm x_i$ are signed sums of $x$ over the dyadic sub-rectangles $Q_\lambda$ of $Q$. Lemma \ref{lem:Haar-algebra} yields
    \[
    \max_{m\in Q}\Big|\sum_{i\in[1,m]}(x_i-g_i)\Big|\le(k+1)^d\,|\HA(x-g)|_\infty, \  x,g\in\R^Q,
    \]
    so that $\ell^\infty$ control in Haar coordinates yields control of all lower-left rectangular sums.
    \item \textbf{$m$-dependent approximation} (\S\ref{ssc:mdep}): $X_i$ is replaced by its $2^k$-dependent approximation $X_i^{[2^k]}$, at a polynomial cost which is negligible compared to the threshold in Theorem \ref{thm:GA}. By the preceding step, it then suffices to couple $Z:=\HA(X_i^{[2^k]})_{i\in Q}$ with a Gaussian vector in $\ell^\infty$.
        \item \textbf{Dyadic projections} (\S\ref{ssc:scales}--\S\ref{ssc:bits}): telescoping the 
        $m$-dependent approximations along dyadic scales gives
    \[
    X_i^{[2^k]}=\sum_{s=0}^k\xi_i^{(s)}, \  \xi_i^{(0)}=X_i^{[1]}, \  \xi_i^{(s)}=X_i^{[2^s]}-X_i^{[2^{s-1}]},
    \]
    where, for $s\ge1$, $\xi_i^{(s)}$ are the orthogonal projection of $X_i$. Each $(\xi_i^{(s)})_{i}$ is thus a $2^s$-dependent field, whose functional dependence is $\lesssim2^{-sA}$ by \eqref{eq:residual-rate-main}. To write $Z$ as a martingale, we reveal the innovations one binary digit at a time, via $\eta_h=F_\eta^{-1}(\sum_{b\ge1}2^{-b}B_{h,b})$ with i.i.d.\ fair bits $B_{h,b}$. This lets us write
    \[
    \IE[Z\mid\cG_{h,b}]-\IE[Z\mid\cG_{h,b-1}]=(2B_{h,b}-1)\,v_{h,b},
    \]
    where $\cG_{h,b}$ is generated by the bits revealed up to $(h,b)$. We call the corresponding Doob martingale along this filtration as the \textit{bitwise martingale}. Note that  all conditional third moments vanish.

            \item \textbf{Coupling} (\S\ref{ssc:beta}--\S\ref{ssc:coupling}): the bitwise martingale facilitates invoking the third-order Yurinskii coupling of \cite{cattaneo2025yurinskii}, subsequently, we control the terms resulting from this application.
\end{itemize}
Now we proceed towards a formal proof. Fix $k\ge1$ and $x\in(0,1/2)$, recall $Q$, $r$ and $\nu_A$ from Theorem \ref{thm:GA}, and let $c:=d(1-2/r)$, so that $\nu_A=1+\sum_{\ell=1}^k2^{\ell(c-A)}$. Here and below, $\lesssim$ hides constants depending only on $d$, $p$, $A$, $\Delta_p$ and the implied constant in \eqref{eq:residual-rate-main}. We denote the right hand side in the large deviation bound of Theorem \ref{thm:CMU} as
\begin{equation}\nonumber
    \mathfrak{b}\asymp x^{-3/2}\,2^{kd/r}\,\nu_A^{1/2}\,(k+1)^{3d/2+1-1/r}.
\end{equation}
Further, denote $\tilde\BX^Q:=(X_i)_{i\in Q}$. With the notational bobbery dealt with, we provide each of the key steps in this proof.

\subsection{Preliminaries: Haar basis and Haar transform} \label{subsec:haar}
Similar to \cite{bretagnolle1989hungarian, rio1997}, we proceed via introducing the Haar basis of the indicator functions of intervals, and relating it to the cubes in our problem.  A dyadic interval in $[2^k]$ is an interval obtained by repeatedly bisecting the whole set.  If $I$ is a dyadic interval with $\abs I\ge2$, let $I^-$ and $I^+$ be its left and right halves and define
\begin{equation*}
  b_I=\abs I^{-1/2}(\ind_{I^-}-\ind_{I^+}).
\end{equation*}
Together with the father function $\phi=2^{-k/2}\ind_{[2^k]},$ these functions form an orthonormal basis $\cB_k$ of $\R^{[2^k]}$.  For $b\in\cB_k$, write $I_b$ for its support, with $I_\phi=[2^k]$.  Then
\begin{equation}\label{eq:haar-values}
  b(i)\in\{0,\pm\abs{I_b}^{-1/2}\}.
\end{equation}
For a tuple $\lambda=(b_1,\dots,b_d)\in\cB_k^d,$
define its corresponding tensor-product function $\psi_\lambda: Q \mapsto \R$, given by
\begin{equation}\label{eq:tensor-haar}
  \psi_\lambda(i)=\prod_{j=1}^d b_j(i_j),\ i=(i_1, \ldots, i_d)\in Q,
  \ \text{and its support rectangle }
  Q_\lambda=\prod_{j=1}^d I_{b_j}.
\end{equation}
In other words, $\psi_{\lambda}(i)=0$ if $i \notin Q_{\lambda} \subseteq Q$. Moreover, writing out $\psi_{\lambda}:=\{\psi_{\lambda}(i); i\in Q\}$ as a vector in $\R^Q$, it follows immediately that $\{\psi_{\lambda}\}_{\lambda \in \cB_k^d}$ constitutes an orthonormal basis of $\R^Q$.  Let $\HO:= (\HO_{\lambda, i})_{\lambda\in \cB_k^d, i \in Q}$ be the orthogonal $2^{kd}\times 2^{kd}$ matrix whose $\lambda$th row is $\psi_\lambda$.  Put
\begin{align}
    \HD=\operatorname{diag}(\abs{Q_\lambda}^{1/2}:\lambda\in\cB_k^d),
  \ \text{and }
  \HA=\HD\HO. \nonumber
\end{align}
The following result collects some technical tools pertinent to the treatment and application of the Haar bases.
\begin{lemma}\label{lem:Haar-algebra}
It follows that
\begin{enumerate}[label=\textup{(\alph*)}]
\item $\HA_{\lambda i}\in\{0,\pm1\}$;
\item $\rho^\star(\HA)^2=2^{kd}$;
\item every column of $\HA$ has exactly $(k+1)^d$ nonzero entries, and $|\HA_{\cdot i}|=(k+1)^{d/2}$;
\item for every $m\in Q$,
\begin{equation*}
  \sum_{\lambda\in\cB_k^d}
  \frac{\abs{\inner{\ind_{[1,m]}, {\psi_\lambda}}}}
       {\abs{Q_\lambda}^{1/2}}
  \le (k+1)^d;
\end{equation*}
\item for all $x,g\in\R^Q$, $\max_{m\in Q}
  \big|\sum_{i\in[1,m]}(x_i-g_i)\big|
  \le (k+1)^d|{\HA(x-g)}|_\infty.$
\end{enumerate}
\end{lemma}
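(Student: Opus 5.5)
The argument reduces everything to the one-dimensional Haar system, using that $\HA=\HD\HO$ has rows $|Q_\lambda|^{1/2}\psi_\lambda$.

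For (a), by \eqref{eq:haar-values} and \eqref{eq:tensor-haar}, $\psi_\lambda(i)\in\{0,\pm\prod_j|I_{b_j}|^{-1/2}\}=\{0,\pm|Q_\lambda|^{-1/2}\}$. Multiplying by $|Q_\lambda|^{1/2}$ gives entries in $\{0,\pm1\}$.

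For (b), $\HO$ is orthogonal, so $\HA^\top\HA=\HO^\top\HD^2\HO$. Hence $\rho^\star(\HA)^2=\rho^\star(\HD)^2=\max_\lambda|Q_\lambda|=|Q|=2^{kd}$, and the maximum is attained at $\lambda=(\phi,\dots,\phi)$.

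For (c), fix $i\in Q$. In one dimension, the point $i_j$ lies in the support of $\phi$ and in exactly one dyadic interval of each length $2^k,2^{k-1},\dots,2$. That gives $1+k$ basis functions $b\in\cB_k$ with $b(i_j)\neq0$. Taking the tensor product, exactly $(k+1)^d$ tuples $\lambda$ have $\psi_\lambda(i)\ne0$. By (a) each such entry of $\HA$ is $\pm1$, so the column norm is $(k+1)^{d/2}$.

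For (d), the indicator factorises, $\ind_{[1,m]}=\bigotimes_j\ind_{[1,m_j]}$, and so do $\psi_\lambda$ and $|Q_\lambda|$. The sum therefore equals $\prod_{j=1}^d\sum_{b\in\cB_k}|\inner{\ind_{[1,m_j]},b}|/|I_b|^{1/2}$. In one dimension, $|\inner{\ind_{[1,m_j]},b}|/|I_b|^{1/2}\le \sum_{i\in I_b}|b(i)|/|I_b|^{1/2}=1$. Moreover, it vanishes unless $I_b$ straddles the cut point, in the sense that $\ind_{[1,m_j]}$ is non-constant on $I_b^-$ or $I_b^+$, or nonzero on $I_b$ without being orthogonal to $b$. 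More precisely, for a Haar function $b$ with $I_b\subseteq[1,m_j]$ the inner product is $0$ by mean zero, and for $I_b\cap[1,m_j]=\emptyset$ it is $0$ as well. So only $b$ whose support contains the boundary point $m_j$ with $m_j<\max I_b$ contribute, together with $\phi$. There is at most one such interval per level, among $k$ levels, and together with $\phi$ each factor is at most $k+1$. This proves (d). The one delicate point is the case $m_j=2^k$: then only $\phi$ contributes, which is still fine.

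For (e), write $v=x-g$. Orthonormal expansion gives
\[
\sum_{i\in[1,m]}v_i=\inner{\ind_{[1,m]},v}=\sum_\lambda\inner{\ind_{[1,m]},\psi_\lambda}\inner{\psi_\lambda,v}=\sum_\lambda\frac{\inner{\ind_{[1,m]},\psi_\lambda}}{|Q_\lambda|^{1/2}}(\HA v)_\lambda.
\]
Bounding each $|(\HA v)_\lambda|$ by $|\HA v|_\infty$ and applying (d) finishes the proof.

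The only real care needed is the one-dimensional counting in (d), namely checking that at each dyadic level at most one Haar function sees the cut point $m_j$. Everything else is bookkeeping with the tensor structure.
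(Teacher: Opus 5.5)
Your proof is correct and follows the paper's argument essentially step for step. You use orthogonality of $\HO$ for (b), one-dimensional counting plus tensorization for (c) and (d), and the orthonormal expansion of $\inner{\ind_{[1,m]},x-g}$ against $\psi_\lambda=|Q_\lambda|^{-1/2}(\text{row }\lambda\text{ of }\HA)$ for (e). The only difference is cosmetic: in (d) you bound each contributing one-dimensional term by $1$ via $\sum_{i\in I_b}|b(i)|=|I_b|^{1/2}$, whereas the paper computes the inner product exactly and gets $1/2$; either bound suffices for $(k+1)^d$.
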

Two consequences of Lemma \ref{lem:Haar-algebra} are used repeatedly. Firstly, applying (e) with $x=\HA^{-1}y$ and $g=0$,
\begin{equation}\label{eq:haar-inverse}
    \max_{m\in Q}\Big|\sum_{i\in[1,m]}(\HA^{-1}y)_i\Big|\le (k+1)^d|y|_\infty,\  y\in\R^Q.
\end{equation}
Secondly, by (c) and Minkowski's inequality, for any random variables $(w_i)_{i\in Q}$,
\begin{equation}\label{eq:haar-column}
    \big\||\HA w|\big\|_p\le\sum_{i\in Q}|\HA_{\cdot i}|\,\|w_i\|_p=(k+1)^{d/2}\sum_{i\in Q}\|w_i\|_p,\  w=(w_i)_{i\in Q}.
\end{equation}

\subsection{$m$-dependent approximation and reduction to Haar coordinates}\label{ssc:mdep}
Recall $X_i^{[m]}$ as in \eqref{eq:local-def}. Lemma \ref{lem:maximal}, applied to the residual field $(X_i-X_i^{[2^k]})_{i}$, along with \eqref{eq:residual-rate-main} instructs that
\begin{align}
    \Big\|\max_{m \in Q}\big| \sum_{i\in [1,m]}(X_i - X_i^{[2^k]}) \big| \Big\|_p \lesssim \Delta_p^{[2^k]} 2^{kd/2}\lesssim 2^{k(d/2-A)}.\label{eq:ga-m-dep}
\end{align}
The polynomial cost in \eqref{eq:ga-m-dep} is dominated by the polynomial part of $\mathfrak{b}$: since $c=d-2d/r$ and $\nu_A\ge\max\{1,2^{k(c-A)}\}$,
\begin{equation}\label{eq:mdep-vs-nu}
    2^{k(d/2-A)}\le 2^{k(d-A)/2}\le 2^{k\max\{d/r,\,(d-A)/2\}}\le 2^{kd/r}\nu_A^{1/2}.
\end{equation}

Write $\BX^{[2^k]}=(X_i^{[2^k]})_{i\in Q}$ and $Z:=\HA\BX^{[2^k]}\in\R^Q$, where the dependence on $Q$ is suppressed for notational convenience. The Gaussian approximation then aims to construct a Gaussian vector $\bG^Q=(G^Q_i)_{i\in Q}$ such that all lower-left rectangular sums of $\BX^{[2^k]}-\bG^Q$ are small with high probability. In fact, we construct a Gaussian vector $T_G$ such that $|Z - T_G|_{\infty}$ is small, after which the linear transformation $\bG^Q=\HA^{-1}T_G$ does the job: by \eqref{eq:haar-inverse},
\begin{align}
    \max_{m \in Q}\big| \sum_{i\in [1,m]}(X_i^{[2^k]}- (\HA^{-1}T_G)_i) \big| \le (k+1)^d |Z - T_G|_{\infty}. \label{eq:ga-haar-appl}
\end{align}
Conveniently, in light of Lemma \ref{lem:Haar-algebra}, replacing simultaneous control of all lower-left rectangular sums by control of the $\ell^\infty$ norm in Haar coordinates worsens the approximation rate only by the logarithmic factor $(k+1)^d$. Moreover, such Gaussian approximations controlling the $\ell^{\infty}$ norm are readily available from \cite{cattaneo2025yurinskii}, once $Z$ is written as a sum of martingale differences with suitable conditional structure; this is accomplished in \S\ref{ssc:scales}--\S\ref{ssc:bits}.

\subsection{Decomposition into local scales}\label{ssc:scales}
We proceed via decomposing $X_i^{[2^k]}$ into projection-based increments across dyadic radii as follows:
\begin{equation}\nonumber
    \xi_i^{(0)}=X_i^{[1]}, \ \text{and, for } s\in [k], \ \xi_i^{(s)}= X_i^{[2^s]}- X_i^{[2^{s-1}]},\  \text{so that}\   X_i^{[2^k]}=\sum_{s=0}^k\xi^{(s)}_i.
\end{equation}
By construction,
\begin{equation}\label{eq:xi-local}
    \xi^{(s)}_i \text{ is a function of } (\eta_{i-u})_{|u|_\infty\le 2^s}.
\end{equation}
Let $\xi_i^{(s),*}$ be obtained by replacing $\eta_0$ by an i.i.d. copy $\eta_0'$, and set $ \delta_{j,p}^{(s)}=\norm{\xi_j^{(s)}-\xi_j^{(s),*}}_p$ and $\Theta_{s,p}=\sum_{j\in\Z^d}\delta_{j,p}^{(s)}.$ For $s\ge1$, rewriting $\xi_i^{(s)}= (X_i -X_i^{[2^{s-1}]})- (X_i -X_i^{[2^{s}]})$, it follows from the triangle inequality and \eqref{eq:residual-rate-main} that $$\Theta_{s,p} \le \Delta_p^{[2^{s-1}]}+ \Delta_p^{[2^s]}\lesssim 2^{-sA}.$$ For $s=0$, if $|i|_\infty\le1$ then $$X^{[1]}_i-X^{[1],*}_i=\IE[X_i-X^\star_i\mid\eta_{i-u},|u|_\infty\le1;\eta'_0],$$ while the difference vanishes if $|i|_\infty>1$; hence the conditional Jensen's inequality gives $\Theta_{0,p}\le\Delta_p$. Consequently,
  \begin{align}
      \Theta_{s,p} \lesssim 2^{-sA},\ 0\le s\le k,\ \text{and}\ \sum_{s=0}^k\Theta_{s,p}\lesssim 1. \label{eq:bound-on-theta}
  \end{align}
  For $0\le s\le k$, denote $\Xi^{(s)}:= (\xi^{(s)}_i)_{i \in Q}$ and $Z^{(s)}= \HA {\Xi}^{(s)}$, so that $Z= \sum_{s=0}^k Z^{(s)}$. Here, since $Q$ is finite, we assume that the entries of $Q$ have been enumerated in the same manner for $\HA$, $\Xi^{(s)}$, and in all subsequent quantities to which $\HA$ will be multiplied. By \eqref{eq:xi-local}, $Z$ depends only on the innovations indexed by
  \begin{equation}\nonumber
      H_Q:= \{h \in \Z^d: \dist_\infty(h,Q)\le 2^k \},\  |H_Q|=3^d2^{kd}.
  \end{equation}

\subsection{The \textit{bitwise martingale}}\label{ssc:bits}
By \S\ref{ssc:scales}, $Z$ is a function of the finitely many innovations $(\eta_h)_{h\in H_Q}$. To apply the coupling of \cite{cattaneo2025yurinskii}, we write $Z$ as a sum of martingale differences with vanishing conditional third moments. Revealing the innovations one site $i\in \Z^d$ at a time does not achieve this, since the resulting increments are general functions of $\eta_h$, whose conditional third moments need not vanish. We therefore reveal them one binary digit at a time. Recall that $\eta_h=F_\eta^{-1}(U_h)$ with $(U_h)_{h\in\Z^d}$ i.i.d.\ $\mathrm{Unif}(0,1)$. Outside a null set, $U_h=\sum_{b\ge1}2^{-b}B_{h,b}$, where $\{B_{h,b}\}_{h\in\Z^d,b\ge1}$ are i.i.d.\ $\mathrm{Ber}(1/2)$ random variables; thus $\eta_h = F^{-1}_{\eta}(\sum_{b=1}^{\infty}2^{-b}B_{h,b})$. Since the number of Bernoulli variables $\{B_{h,b}\}$ is countably infinite, whereas the martingale coupling of \cite{cattaneo2025yurinskii} concerns finitely many martingale differences, we reveal only the first $B$ bits at every site of $H_Q$, where $B\in\N$ is fixed but large, to be chosen later (in \eqref{eq:var-3} and \S\ref{ssc:4errors} (b)). Enumerating $H_Q=\{h_1, \ldots, h_H\}$, let
\begin{equation}\nonumber
    \cG_{h_j, b}=\sigma(\{B_{h_i, c}:i\in [j-1], c\in [B]\}\cup \{B_{h_j, c}: c\in [b]\} ),\  j\in[H],\ b\in[B],
\end{equation}
 where by convention we take $\cG_{h_j, 0}=\cG_{h_{j-1}, B}$ and $\cG_{h_1,0}=\{\emptyset,\Omega\}$. Ordered lexicographically in $(j,b)$, these form a filtration of finite $\sigma$-fields; let $\cB_B:=\cG_{h_H,B}=\sigma(B_{h,b}: h \in H_Q, b\in [B] )$, and put
 \[Z_B:=\IE[Z\mid\cB_B],\  Z^{(s)}_B:=\IE[Z^{(s)}\mid\cB_B].\]
 Since $Z\in L^p$ is measurable with respect to $\sigma(B_{h,b}:h\in H_Q,b\ge1)=\sigma(\cup_B\cB_B)$, martingale convergence entails
 \begin{equation}\label{eq:ZB-conv}
     Z_B\to Z\  \text{almost surely and in } L^p,\  \text{as } B\to\infty.
 \end{equation}
Let
 \[ D_{h,b}:= \IE[Z|\cG_{h,b}] - \IE[Z | \cG_{h, b-1}], \ h\in H_Q, b\in [B], \]
 and likewise define $D_{h, b}^{(s)}$ with $Z^{(s)}$ in place of $Z$, so that $D_{h,b}=\sum_{s=0}^k D_{h,b}^{(s)}$. Since $\IE Z=0$, $\sum_{j=1}^H \sum_{b=1}^B D_{h_j, b}= Z_B$. Finally, for $\eps\in \{0,1\}$, let
 \[z_{\eps, h, b}:= 2\IE[Z\ind\{B_{h,b}=\eps\} | \cG_{h, b-1}], \  z_{\eps, h, b}^{(s)}:= 2\IE[Z^{(s)}\ind\{B_{h,b}=\eps\} | \cG_{h, b-1}],\]
 which are $\cG_{h,b-1}$-measurable. Since $B_{h,b}$ is a fair bit independent of $\cG_{h,b-1}$, we can write
\begin{align}
    \IE[Z | \cG_{h,b}]&= B_{h,b}z_{1, h, b} + (1-B_{h,b})z_{0,h,b} \ \text{and}, \ \IE[Z | \cG_{h,b-1}] =\frac{z_{1,h,b}+z_{0, h, b}}{2} \nonumber,
\end{align}
which implies that
\begin{align}
    D_{h,b}= \zeta_{h,b}v_{h,b},\   D_{h,b}^{(s)}= \zeta_{h,b}v^{(s)}_{h,b},\  \text{where}\  & \zeta_{h,b}:=2B_{h,b}-1,\nonumber\\ v_{h,b}:=\frac{z_{1, h, b}-z_{0,h,b}}{2},\  & v^{(s)}_{h,b}:=\frac{z^{(s)}_{1, h, b}-z^{(s)}_{0,h,b}}{2}. \label{eq:D_hb}
\end{align}
In particular, $v_{h,b}=\sum_{s=0}^kv^{(s)}_{h,b}$. The following lemma records the conditional structure that makes the third-order coupling applicable.
\begin{lemma}\label{lem:rademacher}
The increments $D_{h,b}$ in \eqref{eq:D_hb} satisfy the following.
\begin{enumerate}[label=\textup{(\roman*)}, leftmargin=*]
    \item $\{D_{h_j,b}\}$ is a martingale difference sequence adapted to the filtration $\{\cG_{h_j,b}\}$ ordered lexicographically, with $\sum_{j,b}D_{h_j,b}=Z_B$.
    \item $|D_{h,b}|=|v_{h,b}|$ is $\cG_{h,b-1}$-measurable.
    \item $V_{h,b}:=\IE[D_{h,b}D_{h,b}^\top\mid\cG_{h,b-1}]=v_{h,b}v_{h,b}^\top=D_{h,b}D_{h,b}^\top$.
    \item All conditional third moments vanish: $\IE[D_{h,b,\lambda_1}D_{h,b,\lambda_2}D_{h,b,\lambda_3}\mid\cG_{h,b-1}]=0$ for all coordinates $\lambda_1,\lambda_2,\lambda_3$.
\end{enumerate}
\end{lemma}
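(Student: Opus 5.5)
The four claims all follow from the representation \eqref{eq:D_hb} together with one structural fact. The bit $B_{h,b}$ is independent of $\cG_{h,b-1}$, so $\zeta_{h,b}=2B_{h,b}-1$ is a Rademacher sign independent of $\cG_{h,b-1}$. Meanwhile $z_{\eps,h,b}$, and hence $v_{h,b}$, are $\cG_{h,b-1}$-measurable. I would first check \eqref{eq:D_hb} carefully and then read off (i)--(iv) in order.

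To verify the representation, take a bounded $\cG_{h,b-1}$-measurable $W$. Independence of $B_{h,b}$ gives
\[
\IE[ZW\ind\{B_{h,b}=\eps\}] = \IE\big[W\,\IE[Z\ind\{B_{h,b}=\eps\}\mid\cG_{h,b-1}]\big] = \IE[W z_{\eps,h,b}/2] = \IE[W z_{\eps,h,b}\ind\{B_{h,b}=\eps\}],
\]
where the last step uses independence once more and $\IP(B_{h,b}=\eps)=1/2$. Now $\cG_{h,b}=\cG_{h,b-1}\vee\sigma(B_{h,b})$, and generators of the form $W\ind\{B_{h,b}=\eps\}$ form a $\pi$-system generating $\cG_{h,b}$. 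Hence $\IE[Z\mid\cG_{h,b}]=B_{h,b}z_{1,h,b}+(1-B_{h,b})z_{0,h,b}$. Averaging over $\eps$ gives $\IE[Z\mid\cG_{h,b-1}]=(z_{1,h,b}+z_{0,h,b})/2$. Subtracting the two, and using $B=(1+\zeta)/2$ and $1-B=(1-\zeta)/2$, yields $D_{h,b}=\zeta_{h,b}v_{h,b}$. Integrability is not an issue, since $Z\in L^p$ by Lemma~\ref{lem:Haar-algebra}(a) and $\|X_0\|_p<\infty$. The same argument applies to each $Z^{(s)}$.

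For (i), note that $D_{h,b}$ is $\cG_{h,b}$-measurable and $\IE[D_{h,b}\mid\cG_{h,b-1}]=v_{h,b}\IE\zeta_{h,b}=0$. In the lexicographic order, $\cG_{h_j,0}=\cG_{h_{j-1},B}$, so the increments chain correctly. The sum then telescopes to $\IE[Z\mid\cB_B]-\IE[Z\mid\cG_{h_1,0}]=Z_B-\IE Z=Z_B$.

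For (ii), $\zeta^2=1$ gives $|D_{h,b}|=|v_{h,b}|$, and the right-hand side is $\cG_{h,b-1}$-measurable.

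For (iii), $D_{h,b}D_{h,b}^\top=\zeta^2v v^\top=v v^\top$ is $\cG_{h,b-1}$-measurable, so its conditional expectation is itself.

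For (iv), each triple product equals $\zeta_{h,b}^3v_{\lambda_1}v_{\lambda_2}v_{\lambda_3}=\zeta_{h,b}v_{\lambda_1}v_{\lambda_2}v_{\lambda_3}$. Its conditional expectation is $v_{\lambda_1}v_{\lambda_2}v_{\lambda_3}\IE\zeta_{h,b}=0$. This is justified by independence, with $v_{\lambda_j}\in L^{3}$ when $p\ge3$. If $p<3$, I would first condition on the finite $\sigma$-field $\cG_{h,b-1}$, on whose atoms $v$ is constant, so no integrability is needed.

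The only delicate point is the identification of $\IE[Z\mid\cG_{h,b}]$ in \eqref{eq:D_hb}, which relies on the independence of the fresh bit from the past. Everything else is algebra with $\zeta^2=1$ and $\IE\zeta=0$.
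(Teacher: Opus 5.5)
Your proof is correct and follows the paper's route: the paper also reads (i)--(iv) directly off $D_{h,b}=\zeta_{h,b}v_{h,b}$, using $\zeta_{h,b}^2=1$, $\zeta_{h,b}^3=\zeta_{h,b}$, $\IE[\zeta_{h,b}\mid\cG_{h,b-1}]=0$ and the $\cG_{h,b-1}$-measurability of $v_{h,b}$, and it treats (i) as Doob's martingale property. Your $\pi$-system verification of \eqref{eq:D_hb} supplies a step the paper only asserts in the main text; also, since $\cG_{h,b-1}$ is a finite $\sigma$-field, $v_{h,b}$ is bounded, so the $p\ge3$ versus $p<3$ split in (iv) is unnecessary.
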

Finally in order to apply, Corollary 2.2 of \cite{cattaneo2025yurinskii} (Theorem \ref{thm:CMU} below) to the martingale difference sequence $\{D_{h_j,b}\}$, we define the following terms. Let $\{W_{h_j,b}\}_{j\in [H], b \in [B]}$ be i.i.d. $N(0, I_Q),$ independent of all the random variables above, recall $V_{h,b}$ from Lemma \ref{lem:rademacher}(iii), and define
\begin{align}
    \beta_{\infty, 3}&:= \sum_{h, b}\IE\Big[|D_{h,b}|^3 |D_{h,b}|_{\infty} + |V_{h,b}^{1/2} W_{h,b}|^3|V_{h,b}^{1/2} W_{h,b}|_{\infty} \Big], \ \text{and,}\nonumber\\
    \Omega &= \sum_{h,b} V_{h,b} - \Sigma, \ \Sigma= \HA \Sigma_Q \HA^\top,\nonumber
\end{align}
where $\Sigma_Q=\Cov((X_i)_{i\in Q})$. In the following, we attempt to control $\beta_{\infty, 3}$.

\subsection{\texorpdfstring{Third-moment bounds and truncation\color{black}}{Third-moment bounds and truncation}}\label{ssc:fail}
Recall $M_h^{(s)}$ from Lemma \ref{lem:M_h}, and let $M_h:=(\sum_b|D_{h,b}|^2)^{1/2}$. Triangle inequality gives $M_h\le\sum_sM^{(s)}_h$, so that
\begin{align}
    \sum_{h,b}|D_{h,b}|^3 |D_{h,b}|_{\infty} \le \sum_{h,b} |D_{h,b}|^4 \le \sum_h \big(\sum_b |D_{h,b}|^2 \big)^2 =\sum_hM_h^4 &\le \sum_h \big(\sum_s M_h^{(s)}\big)^4 \nonumber.
\end{align}
On the other hand, a version of Burkholder's inequality (Lemma \ref{lem:M_h}) and \eqref{eq:bound-on-theta} jointly imply
\begin{equation}\label{eq:Mh-bound}
    \|M_h\|_p\le \|\sum_s M_h^{(s)}\|_p \lesssim (k+1)^{d/2}\sum_{s=0}^k \Theta_{s,p} \lesssim (k+1)^{d/2},\  h\in H_Q.
\end{equation}
Therefore, if $p\ge 4$, it follows that $ \| M_h\|_4^4 \le \|M_h\|_p^4 \lesssim (k+1)^{2d}$, which provides
\[\IE\Big[\sum_{h,b}|D_{h,b}|^3 |D_{h,b}|_{\infty}\Big] \lesssim 2^{kd} (k+1)^{2d}\]
in light of $|H_Q|= 3^d 2^{kd}$.

However, this step cannot go through when $p\in (2,4)$. Despite $D_{h,b}$ possessing moments of all orders, its moments depend on $B$, which we will take to be potentially very large so that $Z - Z_B$ is a negligible quantity. Therefore, in order to establish sharper control of the moments, we further truncate $D_{h,b}$ when $p\in (2,4)$. Denote $u:=2^{kd/p}(k+1)^{d/2}$, and
\begin{align}
    \wh{D}_{h,b}= \begin{cases}
        D_{h,b}, & p \ge 4,\\
        D_{h,b} \ind{\{ |D_{h,b}|\le u\}}, & 2<p<4.
    \end{cases} \nonumber
\end{align}
Firstly, by Lemma \ref{lem:rademacher}, $\{\wh{D}_{h_j,b}\}_{j\in [H], b\in [B]}$ remains a martingale difference sequence with respect to $\{\cG_{h_j,b}\}_{j\in [H], b \in [B]}$; in fact, $\wh D_{h,b}=\zeta_{h,b}\wh v_{h,b}$ with $\wh v_{h,b}:=v_{h,b}\ind\{|v_{h,b}|\le u\}$ (and $\wh v_{h,b}=v_{h,b}$ if $p\ge4$), and the conditional identities in Lemma \ref{lem:rademacher}(ii)--(iv) apply to $\wh D_{h,b}$ and its conditional covariance. Moreover, since $p/2\ge1$, $\sum_b|D_{h,b}|^p\le(\sum_b|D_{h,b}|^2)^{p/2}=M_h^p$, so that \eqref{eq:Mh-bound} yields
\begin{equation}\label{eq:Dp-sum}
    \sum_{h,b}\IE[|D_{h,b}|^p]\le\sum_h\IE[M_h^p]\lesssim 2^{kd}(k+1)^{pd/2}.
\end{equation}
Consequently, for $2<p<4$,
\begin{align}
    \IE\Big[|\sum_{h,b} (D_{h,b} - \wh{D}_{h,b})|_{\infty}\Big] &\le \sum_{h,b}\IE[|D_{h,b}|\ind{\{ |D_{h,b}|> u\}}]  \le  u^{1-p} \sum_{h,b}\IE[|D_{h,b}|^p]\lesssim (k+1)^{d/2} 2^{kd/p}. \label{eq:trunc-approx}
\end{align}
Equation \eqref{eq:trunc-approx} allows us to focus on establishing a Gaussian approximation for $\wh S:=\sum_{h,b}\wh{D}_{h,b}$. Correspondingly, define $\wh{V}_{h,b}:=\IE[\wh{D}_{h,b}\wh{D}_{h,b}^\top|\cG_{h,b-1}]=\wh v_{h,b}\wh v_{h,b}^\top$, and denote
\begin{align}
    \wh{\beta}_{\infty, 3}&:= \sum_{h, b}\IE\Big[|\wh{D}_{h,b}|^3 |\wh{D}_{h,b}|_{\infty} + |\wh{V}_{h,b}^{1/2} W_{h,b}|^3|\wh{V}_{h,b}^{1/2} W_{h,b}|_{\infty} \Big], \ \text{and,}\nonumber\\
    \wh{\Omega} &= \sum_{h,b} \wh{V}_{h,b} - \Sigma, \ \Sigma= \HA \Sigma_Q \HA^\top.\nonumber
\end{align}
\subsection{Control of \texorpdfstring{$\wh{\beta}_{\infty, 3}$}{beta-hat(infinity, 3)}}\label{ssc:beta}
Note that, similar to the treatment in \S\ref{ssc:fail}, it follows that for $p \ge 4$,
\begin{align}
    \IE\Big[\sum_{h,b}|\wh{D}_{h,b}|^3 |\wh{D}_{h,b}|_{\infty}\Big] \leq \sum_{h,b}\IE[|\wh D_{h,b}|^4]\le \sum_h \| M_h \|_p^4 \lesssim 2^{kd} (k+1)^{2d}. \label{eq:beta-first}
\end{align}
On the other hand, for $p \in (2,4)$, \eqref{eq:Dp-sum} yields
\begin{align}
    \IE\Big[\sum_{h,b}|\wh{D}_{h,b}|^3 |\wh{D}_{h,b}|_{\infty}\Big] \leq \sum_{h,b} \IE[|\wh{D}_{h,b}|^4] &\leq u^{4-p}\sum_{h,b} \IE[|{D}_{h,b}|^p]\nonumber\\
    &\lesssim \big(2^{\frac{kd}{p}} (k+1)^{d/2}\big)^{4-p} 2^{kd} (k+1)^{pd/2}= 2^{4kd/p} (k+1)^{2d}\label{eq:beta-second}.
\end{align}
A similar calculation holds for the second summand of $\wh{\beta}_{\infty, 3}$. Indeed, $\wh V_{h,b}=\wh v_{h,b}\wh v_{h,b}^\top$ has rank at most one, so that, whenever $\wh v_{h,b}\ne0$,
\[\wh V_{h,b}^{1/2}=\wh v_{h,b}\wh v_{h,b}^\top/|\wh v_{h,b}|\]
(and $\wh V_{h,b}^{1/2}=0$ otherwise), and
\[\wh V_{h,b}^{1/2}W_{h,b}=g_{h,b}\,\wh v_{h,b},\  g_{h,b}:=\wh v_{h,b}^\top W_{h,b}/|\wh v_{h,b}|,\]
where, conditional on $\cG_{h,b-1}$, $g_{h,b}\sim N(0,1)$ since $W_{h,b}$ is independent of $\cG_{h,b-1}$. Hence
\[|\wh V_{h,b}^{1/2}W_{h,b}|^3|\wh V_{h,b}^{1/2}W_{h,b}|_\infty\le g_{h,b}^4|\wh v_{h,b}|^4,\]
whose conditional expectation equals $3|\wh v_{h,b}|^4=3|\wh D_{h,b}|^4$. Combining \eqref{eq:beta-first}-\eqref{eq:beta-second} yields
\begin{align}
    \wh{\beta}_{\infty, 3} \le 4\sum_{h,b}\IE[|\wh D_{h,b}|^4]\lesssim 2^{4kd/r} (k+1)^{2d}, \ r=p\wedge 4. \label{eq:beta-final}
\end{align}
\subsection{Control of \texorpdfstring{$\IE[\rho^{\star}(\wh{\Omega})]$}{E[rho-star(Omega-hat)]}}\label{ssc:Omega}
Recall $V_{h,b}=D_{h,b} D_{h,b}^\top$ from Lemma \ref{lem:rademacher}, and let $\Sigma_Q^{[2^k]}=\cov(\BX^{[2^k]})$. Noting that $\Cov(Z)= \HA\Sigma_Q^{[2^k]} \HA^\top$, write
\begin{align}
    \IE[\rho^{\star}(\wh{\Omega})]&\leq \IE\Bigg[\rho^{\star}\Big(\sum_{h,b} (\wh{V}_{h,b} - V_{h,b})\Big)\Bigg] + \IE\Bigg[\rho^\star\Big(\sum_{h,b} V_{h,b} - \Cov(Z_B) \Big) \Bigg] \nonumber\\ &\hspace{2cm}+ \rho^\star\big(\Cov(Z_B) - \Cov(Z) \big) + \rho^\star\big(\HA(\Sigma_Q^{[2^k]}  - \Sigma_Q)\HA^\top\big). \label{eq:var-decomp}
\end{align}
In the following, we control \eqref{eq:var-decomp} termwise. Firstly, the first term vanishes when $p\ge4$. For $2<p<4$, $V_{h,b}-\wh V_{h,b}=v_{h,b}v_{h,b}^\top\ind\{|v_{h,b}|>u\}$, whose largest singular value equals $|D_{h,b}|^2\ind\{|D_{h,b}|>u\}\le u^{2-p}|D_{h,b}|^p$; hence, by \eqref{eq:Dp-sum},
\begin{align}
    \IE\Bigg[\rho^{\star}\Big(\sum_{h,b} (\wh{V}_{h,b} - V_{h,b})\Big)\Bigg] \le u^{2-p} \sum_{h,b} \IE[|D_{h,b}|^p] \lesssim 2^{2kd/p} (k+1)^d. \label{eq:var-1}
\end{align}
On the other hand, for the second term in \eqref{eq:var-decomp}, Lemma \ref{lem:cov-2nd-term} instructs that
 \begin{align}
     \IE\Big[\rho^\star\big(\sum_{h,b} V_{h,b} - \Cov(Z_B)\big)\Big] \lesssim 2^{2kd/ r} (k+1)^{d+1 - 2/r} \nu_A, \ r=p\wedge 4. \label{eq:var-2}
 \end{align}
For the third term, \eqref{eq:ZB-conv} and the finiteness of $Q$ imply $\Cov(Z_B)\to\Cov(Z)$ as $B\to\infty$. We choose $B$ so large that
\begin{equation}\label{eq:var-3}
    \rho^\star\big(\Cov(Z_B) - \Cov(Z) \big)\le 2^{2kd/r}(k+1)^{d+1-2/r}.
\end{equation}
For the fourth term, Lemma \ref{lem:Haar-algebra}(b), Lemma \ref{lem:cov-compare} and \eqref{eq:residual-rate-main} yield
\begin{equation}\label{eq:var-4}
    \rho^\star\big(\HA(\Sigma_Q^{[2^k]}  - \Sigma_Q)\HA^\top\big)\le 2^{kd}\rho^\star\big(\Sigma_Q^{[2^k]}  - \Sigma_Q\big)\lesssim 2^{k(d-A)}=2^{2kd/r}2^{k(c-A)}\lesssim 2^{2kd/r}\nu_A,
\end{equation}
where we used $d=2d/r+c$ and $\nu_A\ge2^{k(c-A)}$. Combining \eqref{eq:var-1}-\eqref{eq:var-4}, and noting that $(k+1)^d\le(k+1)^{d+1-2/r}$ and $\nu_A\ge1$,
\begin{equation}\label{eq:Omega-final}
    \IE[\rho^{\star}(\wh{\Omega})]\lesssim 2^{2kd/ r} (k+1)^{d+1 - 2/r} \nu_A.
\end{equation}
\subsection{The martingale coupling, and the conclusion}\label{ssc:coupling}
We have now reached the point at which \cite{cattaneo2025yurinskii} can be directly invoked. For completeness, we state a version of the Corollary 2.2 therein that we end up applying.
\begin{theorem}[Corollary 2.2, Eqn. (5), of \cite{cattaneo2025yurinskii}]\label{thm:CMU}
On a complete probability space, let $\mathcal{H}_0\subseteq\mathcal{H}_1\subseteq\cdots\subseteq\mathcal{H}_T$ be countably generated $\sigma$-fields with $\mathcal{H}_0$ trivial, and let $X_1,\ldots,X_T$ be square-integrable $\R^{D}$-valued random vectors such that $X_t$ is $\mathcal{H}_t$-measurable and $\IE[X_t|\mathcal{H}_{t-1}]=0$. Let $$V_t=\IE[X_tX_t^\top|\mathcal{H}_{t-1}], \ S=\sum_{t=1}^TX_t\  \text{and } \Omega_T=\sum_{t=1}^TV_t-\Sigma,$$ where $\Sigma$ is a positive semi-definite matrix. Suppose that $\IE[X_{t,a}X_{t,b}X_{t,c}|\mathcal{H}_{t-1}]=0$ for all $t$ and all $a,b,c\in[D]$. Let $W_1,\ldots,W_T$ be i.i.d.\ $N(0,I_D)$, independent of $\mathcal{H}_T$, and
\[\beta_{\infty,3}=\sum_{t=1}^T\IE\Big[|X_t|^3|X_t|_\infty+|V_t^{1/2}W_t|^3|V_t^{1/2}W_t|_\infty\Big].\]
Then, for every $\bar\eta>0$, on an enlarged probability space there exists $T_G\sim N(0,\Sigma)$ such that
\begin{equation}\nonumber
    \IP(|S-T_G|_\infty>\bar\eta)\le 24\Big(\frac{\beta_{\infty,3}\,\phi_\infty(D)^3}{\bar\eta^4}\Big)^{1/4}+17\Big(\frac{\IE[\rho^\star(\Omega_T)]\,\phi_\infty(D)^2}{\bar\eta^2}\Big)^{1/3},
\end{equation}
where $\phi_\infty(D)=\sqrt{2\log(2D)}$.
\end{theorem}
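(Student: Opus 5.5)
This is Corollary~2.2 of \cite{cattaneo2025yurinskii}, which the paper imports rather than proves. If I had to reprove it, I would follow the Yurinskii/Strassen route: first reduce the coupling statement to smooth test-function comparisons, then run a martingale Lindeberg replacement carried to fourth order, and finally compare a Gaussian with random covariance to $N(0,\Sigma)$.

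\textbf{Step 1: reduction to smooth comparisons.} By Strassen's theorem (in the form of Dudley's coupling lemma), a $T_G\sim N(0,\Sigma)$ with
\[
\IP(|S-T_G|_\infty>\bar\eta)\le\varepsilon
\]
exists on an enlarged space as soon as $\IP(S\in A)\le\IP(T\in A^{\bar\eta})+\varepsilon$ for every Borel $A\subseteq\R^D$. Here $A^{\bar\eta}$ is the closed $\ell^\infty$-enlargement and $T\sim N(0,\Sigma)$.

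For fixed $A$, I would build a test function $f$ with $\ind_A\le f\le\ind_{A^{\bar\eta}}$ up to a small error. The construction is the sup-norm distance to $A$, smoothed by the soft-max $\beta^{-1}\log\sum_{a}(e^{\beta y_a}+e^{-\beta y_a})$ with $\beta\asymp\phi_\infty(D)/\bar\eta$, then composed with a smooth step function.

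The key analytic input is that such an $f$ has $k$-th order derivatives whose $\ell^1$-type norms, contracted against $u^{\otimes k}$, are bounded by $C_k(\beta/\bar\eta^{\,0})^{\ldots}$. More precisely, contracted against a vector $u$ they are bounded by
\[
C\,|u|_\infty^{j}|u|^{k-j}\,\phi_\infty(D)^{k-1}/\bar\eta^{k}.
\]
This is exactly the form that produces the mixed norm $|X_t|^3|X_t|_\infty$ in $\beta_{\infty,3}$.

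\textbf{Step 2: martingale Lindeberg replacement.} I would replace the increments one at a time, from $t=T$ down to $t=1$, swapping $X_t$ for $V_t^{1/2}W_t$. The hybrid sums are
\[
\sum_{s<t}X_s+\sum_{s\ge t}V_s^{1/2}W_s .
\]
Since $V_t$ is $\mathcal H_{t-1}$-measurable and the $W_t$ are independent, conditioning on $\mathcal H_{t-1}$ makes the first and second conditional moments of $X_t$ and $V_t^{1/2}W_t$ agree. The third conditional moments agree too: both vanish, by hypothesis for $X_t$ and by symmetry for the Gaussian term.

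I would Taylor-expand $f$ to fourth order around the $\mathcal H_{t-1}$-measurable-plus-future part. The expansion terms of order up to three then cancel in expectation, and the remainder is bounded by the derivative estimate times $\IE[|X_t|^3|X_t|_\infty+|V_t^{1/2}W_t|^3|V_t^{1/2}W_t|_\infty]$.

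One subtlety is that the future Gaussian terms carry covariances $V_s$, $s>t$, that are not $\mathcal H_{t-1}$-measurable. I would handle this as the authors do: work conditionally and use only the fact that the remainder bound is uniform in the base point. Summing over $t$ gives
\[
\IP(S\in A)\le\IE f\Big(\sum_tV_t^{1/2}W_t\Big)+C\,\beta_{\infty,3}\,\phi_\infty(D)^3/\bar\eta^4+(\text{smoothing error}).
\]

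\textbf{Step 3: random-covariance comparison and optimisation.} The main obstacle is that $\sum_tV_t^{1/2}W_t$ is only conditionally Gaussian, with covariance $\Sigma+\Omega_T$, which is random and may not even dominate $\Sigma$. To compare it with $N(0,\Sigma)$, I would run a Gaussian interpolation (Slepian-type) on the event-wise covariances. The second-derivative bound gives an error of order $\IE[\rho^\star(\Omega_T)]\,\phi_\infty(D)^2/\bar\eta'^2$, with the non-dominance handled by splitting $\Omega_T$ into positive and negative parts.

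Finally, I would optimise the smoothing parameters separately for the two error terms: a scale $\bar\eta'$ for the covariance term and the full $\bar\eta$ for the fourth-order term. Balancing against the smoothing loss turns the errors into the $1/4$ and $1/3$ powers in the statement, with the explicit constants $24$ and $17$ coming from careful bookkeeping of the derivative constants.
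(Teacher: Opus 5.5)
The paper gives no proof of this statement. It quotes Corollary~2.2 of \cite{cattaneo2025yurinskii}, and that citation is all that is needed. The reproof you sketch, however, has two genuine gaps.

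\textbf{The test function.} Strassen's criterion must be checked for \emph{every} Borel (or closed) $A\subseteq\R^D$. Soft-max cannot smooth $x\mapsto\dist_\infty(x,A)=\inf_{a\in A}|x-a|_\infty$ for general $A$. Replacing $|x-a|_\infty$ by its soft-max still leaves an infimum over $a\in A$, and that infimum has kinks already when $A$ consists of two points. Soft-max handles maxima of finitely many linear forms, such as rectangles or max-statistics. That is enough for Kolmogorov-type bounds but not for couplings.

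Even where soft-max does apply, keeping its approximation error below $\bar\eta$ forces $\beta\asymp\log(2D)/\bar\eta$ rather than $\phi_\infty(D)/\bar\eta$. The resulting fourth derivatives are then of order $\log(2D)^3|u|_\infty^4/\bar\eta^4$, not the mixed bound you state.

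What works is a Gaussian convolution $f=g*\varphi_\sigma$, where $\varphi_\sigma$ is the $N(0,\sigma^2I_D)$ density and $g$ is a cutoff of $\dist_\infty(\cdot,A)$ that is $\bar\eta^{-1}$-Lipschitz for $|\cdot|_\infty$. One derivative falls on $g$ and contributes $|u|_\infty/\bar\eta$; three fall on the Gaussian kernel and contribute $|u|^3/\sigma^3$. The price is that $f$ lies between $\ind_A$ and $\ind_{A^{3\bar\eta}}$ only up to $\IP(\sigma|Z|_\infty>\bar\eta)\le\sigma\phi_\infty(D)/\bar\eta$, using $\IE|Z|_\infty\le\phi_\infty(D)$. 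Balancing this slack against $\beta_{\infty,3}/(\sigma^3\bar\eta)$ is exactly what yields the $1/4$ power and the factor $\phi_\infty(D)^3$. With the deterministic sandwich you describe, there would be nothing to balance.

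\textbf{The martingale Lindeberg step.} This is the crux, and your proposed fix does not work. In your hybrid, the base point $\sum_{s<t}X_s+\sum_{s>t}V_s^{1/2}W_s$ depends on $X_t$, because $V_s$ is $\mathcal{H}_{s-1}$-measurable and $\mathcal{H}_{s-1}\supseteq\mathcal{H}_t$ for $s>t$. Hence the first- to third-order Taylor terms, e.g.\ $\IE[\nabla f(\text{base})^\top X_t]$, need not vanish. A remainder bound uniform in the base point does not restore this cancellation. Conditioning on any $\sigma$-field that makes the base point measurable also carries information about $X_t$, so the conditional moments of $X_t$ are no longer the prescribed ones.

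The missing idea is to replace the whole future Gaussian block by a single Gaussian with the \emph{predictable} remaining covariance $\Sigma+M-\sum_{s<t}V_s$. This matrix is $\mathcal{H}_{t-1}$-measurable, so conditionally on $\mathcal{H}_{t-1}$ the base point is independent of $X_t$. It requires a deterministic cap $\sum_tV_t\preceq\Sigma+M$, which you enforce by multiplying $X_t$ by the $\mathcal{H}_{t-1}$-measurable indicator of $\{\sum_{s\le t}V_s\preceq\Sigma+M\}$, at cost $\IP(\Omega_T\not\preceq M)$. The hybrid then interpolates between $N(0,\Sigma+M)$ and, on $\{\Omega_T\preceq M\}$, $S$ plus a conditionally Gaussian term with covariance $M-\Omega_T$.

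Take $M=\nu I$. Optimizing over $\nu$ the Markov bounds $\IP(\rho^\star(\Omega_T)>\nu)\le\IE\rho^\star(\Omega_T)/\nu$ and $\IP(\sqrt{\nu}|Z|_\infty>\bar\eta)\le\sqrt{\nu}\,\phi_\infty(D)/\bar\eta$ gives the $1/3$ power. Your Step~3 interpolation between $\Sigma+\Omega_T$ and $\Sigma$ presupposes the output of Step~2, so it cannot substitute for this device.
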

To smoothen out the application of Theorem \ref{thm:CMU}, order the finitely many pairs $(h_j,b)$ and use the dictionary
\begin{equation}\nonumber
    \mathcal{H}_0=\{\emptyset,\Omega\},\  \mathcal{H}_t=\cG_{h_j,b},\   X_t=\wh D_{h_j,b},\   V_t=\wh V_{h_j,b},\  \Omega_T=\wh\Omega,\   D=2^{kd}.
\end{equation}
By Lemma \ref{lem:rademacher}, $\{X_t\}$ are martingale differences with $V_t=\IE[X_tX_t^\top|\mathcal{H}_{t-1}]$ and vanishing conditional third moments. Thus in view of $\phi_\infty(2^{kd})^2=2\log(2^{kd+1})\lesssim (k+1)$, \eqref{eq:beta-final} and \eqref{eq:Omega-final}, Theorem \ref{thm:CMU} instructs that with
\[\bar\eta:=\frac{\mathfrak{b}}{4(k+1)^d} \asymp x^{-3/2}2^{kd/r}\nu_A^{1/2}(k+1)^{d/2+1-1/r},\]
there exists $T_G\sim N(0,\Sigma)$ such that
\begin{align*}
    \IP(|\wh S-T_G|_\infty>\bar\eta)&\lesssim \Big(\frac{2^{4kd/r}(k+1)^{2d+3/2}}{x^{-6}2^{4kd/r}\nu_A^2(k+1)^{2d+4-4/r}}\Big)^{1/4}\\
    &\hspace{2cm}+\Big(\frac{2^{2kd/r}(k+1)^{d+2-2/r}\nu_A}{x^{-3}2^{2kd/r}\nu_A(k+1)^{d+2-2/r}}\Big)^{1/3}\\
    &\lesssim x^{3/2}(k+1)^{(4/r-5/2)/4}+x.
\end{align*}
Since $r>2$ implies $4/r-5/2<0$, and $x^{3/2}\le x$, we can choose the constant hiding in $\lesssim$ to ensure
\begin{equation}\label{eq:CMU-applied}
    \IP(|\wh S-T_G|_\infty>\bar\eta)\le x/4.
\end{equation}
Set $\bG^Q:=\HA^{-1}T_G$. Then $\bG^Q$ is a centered Gaussian vector with
\[\Cov(\bG^Q)=\HA^{-1}\Sigma\HA^{-\top}=\Sigma_Q=\Cov((X_i)_{i\in Q}).\]
Recalling $\HA\BX^{[2^k]}=Z$ and $Z_B=\sum_{h,b}D_{h,b}$, we can decompose $\BX^{[2^k]}-\bG^Q=\HA^{-1}(Z-Z_B)+\HA^{-1}\sum_{h,b}(D_{h,b}-\wh D_{h,b})+\HA^{-1}(\wh S-T_G)$. Therefore, the triangle inequality and \eqref{eq:haar-inverse} yield
\begin{align}
    \max_{m\in Q}\Big|\sum_{i\in[1,m]}(\tilde X^Q_i-G^Q_i)\Big|\le&\max_{m\in Q}\Big|\sum_{i\in[1,m]}(X_i-X^{[2^k]}_i)\Big|+(k+1)^d|Z-Z_B|_\infty\nonumber\\
    &+(k+1)^d\Big|\sum_{h,b}(D_{h,b}-\wh D_{h,b})\Big|_\infty+(k+1)^d|\wh S-T_G|_\infty.\label{eq:four-errors}
\end{align}
In (a), (b) (c) and (d) in the following, we show that each of the four terms on the right-hand side of \eqref{eq:four-errors} exceeds $\mathfrak{b}/4$ with probability at most of the order of $x$, so that we always have the option to choose suitable constants.

\subsection{Control of \eqref{eq:four-errors}}\label{ssc:4errors}
\begin{enumerate}[label=(\alph*)]
    \item By Markov's inequality, \eqref{eq:ga-m-dep} and \eqref{eq:mdep-vs-nu},
    \begin{align*}
    \IP\Big(\max_{m\in Q}\Big|\sum_{i\in[1,m]}(X_i-X^{[2^k]}_i)\Big|>\frac{\mathfrak{b}}4\Big)&\le\Big(\frac{4}{\mathfrak{b}}\Big)^p\Big\|\max_{m\in Q}\Big|\sum_{i\in[1,m]}(X_i-X^{[2^k]}_i)\Big|\Big\|_p^p\\
    &\lesssim\Big(\frac{2^{k(d/2-A)}}{\mathfrak{b}}\Big)^p\le x^{3p/2}\le x.
    \end{align*}
    \item In addition to \eqref{eq:var-3}, we choose $B$ so large that $\IP((k+1)^d|Z-Z_B|_\infty>\mathfrak{b}/4)\le x/4$; this is possible by \eqref{eq:ZB-conv}, since $k$, $x$ and $\mathfrak{b}$ are fixed. All other bounds above are uniform in $B$.
    \item If $p\ge4$, the third term vanishes. If $2<p<4$, then $r=p$, and Markov's inequality and \eqref{eq:trunc-approx} give
    \[\IP\Big((k+1)^d\Big|\sum_{h,b}(D_{h,b}-\wh D_{h,b})\Big|_\infty>\frac{\mathfrak{b}}4\Big)\lesssim\frac{(k+1)^{3d/2}2^{kd/p}}{ x^{-3/2}2^{kd/p}\nu_A^{1/2}(k+1)^{3d/2+1-1/p}}\lesssim x^{3/2}\le x.\]
    \item By \eqref{eq:CMU-applied}, $\IP((k+1)^d|\wh S-T_G|_\infty>\mathfrak{b}/4)=\IP(|\wh S-T_G|_\infty>\bar\eta)\le x$.
\end{enumerate}
 Since $Q=[2^k]^d$, the family $\{[1,m]:m\in Q\}$ coincides with $\{R:R\preceq Q,\ R\ne\emptyset\}$. Therefore, a union bound over (a)-(d) proves Theorem \ref{thm:GA}, in view of \eqref{eq:ga-m-dep}, \eqref{eq:ga-haar-appl}, \eqref{eq:trunc-approx}, \eqref{eq:beta-final}, \eqref{eq:Omega-final} and \eqref{eq:CMU-applied}.
\section{Proof of Theorem \ref{thm:spectral}}\label{sec:proof-spectral}
Throughout this section, we grant the assumptions of Theorem \ref{thm:main}. Recall $\gamma(h)=\Cov(X_0,X_h)$, and let $\mu$ denote the normalized Lebesgue measure on $\mathbb{T}^d:=[-\pi,\pi]^d$. To properly state our arguments, some notational niceties need to be taken care of; we write $\iu$ for the imaginary unit, $\overline{z}$ for the complex conjugate of $z\in\C$, and $\mathcal L(Z)$ for the law
of $Z$.

\subsection{The spectral density near the origin} \label{ssc:spec-origin}
By Lemma \ref{lem:cov-tail}, the Fourier series $f(\lambda)=\sum_{h\in\Z^d}\gamma(h)e^{-\iu\langle h,\lambda\rangle}$ converges absolutely and uniformly, so that $f$ is continuous; since $\gamma(-h)=\gamma(h)$ is real, $f$ is real and even, and $\gamma(h)=\int_{\mathbb{T}^d}e^{\iu\langle h,\lambda\rangle}f(\lambda)\mu(\mathrm{d}\lambda)$. Moreover, $f\ge0$: for $n\in\N$ and $\lambda\in\mathbb{T}^d$,
\[0\le\frac{1}{(2n+1)^d}\IE\bigg[\Big|\sum_{j\in\{-n,\ldots,n\}^d}X_je^{-\iu\langle j,\lambda\rangle}\Big|^2\bigg]=\sum_{h}\gamma(h)e^{-\iu\langle h,\lambda\rangle}\prod_{l=1}^d\Big(1-\frac{|h_l|}{2n+1}\Big)_+\longrightarrow f(\lambda),\]
by dominated convergence. Finally, $f(0)=\sum_h\gamma(h)=\sigma^2_\infty>0$.

\subsection{A common i.i.d random field}
Let $W$ be a centered complex Gaussian random measure on $\mathbb{T}^d$ with control measure $\mu$, so that, $\IE[W(B_1)\overline{W(B_2)}]=\mu(B_1\cap B_2)$, which is Hermitian, $W(-B)=\overline{W(B)}$. We will use the following properties of $W$:
\begin{itemize}
    \item the stochastic integral $\int\varphi\,\mathrm{d}W$ is real-valued whenever $\varphi(-\lambda)=\overline{\varphi(\lambda)}$;
    \item $\IE[\int\varphi\,\mathrm{d}W\,\overline{\int\psi\,\mathrm{d}W}]=\int\varphi\bar\psi\,\mathrm{d}\mu$; see \cite[Chapter I]{ibragimov1978gaussian}. 
\end{itemize}
Define, on the probability space $(\Omega^{\mathrm{sp}},\Fcal^{\mathrm{sp}},\IP^{\mathrm{sp}})$ the jointly Gaussian random variables
\[G^{\mathrm{sp}}_i:=\int_{\mathbb{T}^d}e^{\iu\langle i,\lambda\rangle}\sqrt{f(\lambda)}\,W(\mathrm{d}\lambda),\  Y^{\mathrm{sp}}_i:=\int_{\mathbb{T}^d}e^{\iu\langle i,\lambda\rangle}\,W(\mathrm{d}\lambda),\  i\in\Z^d,\]
with $\IE[G^{\mathrm{sp}}_iG^{\mathrm{sp}}_j]=\int e^{\iu\langle i-j,\lambda\rangle}f(\lambda)\mu(\mathrm{d}\lambda)=\gamma(i-j)$ and $\IE[Y^{\mathrm{sp}}_iY^{\mathrm{sp}}_j]=\int e^{\iu\langle i-j,\lambda\rangle}\mu(\mathrm{d}\lambda)=\ind\{i=j\}$. The difference $H_i:=G^{\mathrm{sp}}_i-\sigma_\infty Y^{\mathrm{sp}}_i=\int e^{\iu\langle i,\lambda\rangle}(\sqrt{f(\lambda)}-\sigma_\infty)W(\mathrm{d}\lambda)$ is a stationary Gaussian field with spectral density $f_H:=(\sqrt f-\sigma_\infty)^2$. Since $\sigma_\infty=\sqrt{f(0)}>0$,
\begin{equation}\label{eq:fH}
    f_H(\lambda)=\frac{(f(\lambda)-f(0))^2}{(\sqrt{f(\lambda)}+\sigma_\infty)^2}\le\sigma_\infty^{-2}(f(\lambda)-f(0))^2.
\end{equation}
Combining \eqref{eq:fH} with Lemma \ref{lem:spec-modulus}, we obtain $f_H(\lambda)\lesssim s^\varrho$, where
\[\varrho:=2A\ \text{ if }0<A<2,\ \varrho:=3\ \text{ if }A=2,\ \varrho:=4\ \text{ if }A>2,\]
since $s^4\log^2(e/s)\lesssim s^3$. In particular, $\varrho<1$ if and only if $A<1/2$, $\varrho=1$ if and only if $A=1/2$, and $\varrho\in(1,4]$ if $A>1/2$. Moreover, $s^\varrho\le|\lambda|_\infty^\varrho\le\sum_{j=1}^d|\lambda_j|^\varrho$.
Now we proceed towards the formal proof.
\begin{proof}[Proof of Theorem \ref{thm:spectral}]
Fix $k\ge0$, and let $Q$ be a translate of $[2^k]^d$. By stationarity, the non-empty lower-left sub-rectangles of $Q$ are translates of the prefixes $[1,m]$, $m\in[2^k]^d$, and the corresponding partial sums of $(H_i)$ have the same laws. By the spectral isometry, for $m\in[2^k]^d$,
\[\Var\Big(\sum_{i\in[1,m]}H_i\Big)=\int_{\mathbb{T}^d}f_H(\lambda)\prod_{j=1}^d|D_{m_j}(\lambda_j)|^2\mu(\mathrm{d}\lambda)\lesssim \sum_{j=1}^d\Big[\int_{-\pi}^\pi|u|^\varrho|D_{m_j}(u)|^2\frac{\mathrm{d}u}{2\pi}\Big]\prod_{l\ne j}m_l,\]
where we used Lemma \ref{eq:dirichlet} after recalling $D_m(\lambda)$ from therein. Consequently, uniformly over $m\in[2^k]^d$,
\begin{equation}\label{eq:prefix-var}
    \Var\Big(\sum_{i\in[1,m]}H_i\Big)\lesssim v_k^2,\  v_k^2:=\begin{cases}
        2^{k(d-2A)}, & 0<A<1/2,\\
        2^{k(d-1)}(k+1), & A=1/2,\\
        2^{k(d-1)}, & A>1/2;
    \end{cases}
\end{equation}
for instance, if $A<1/2$, then $m_j^{1-2A}\prod_{l\ne j}m_l\le 2^{k(d-2A)}$. Since there are $2^{kd}$ nonempty lower-left sub-rectangles of $Q$, the Gaussian tail bound and a union bound give, for $t>0$,
\[\IP^{\mathrm{sp}}\Big(\max_{R\preceq Q}\Big|\sum_{i\in R}H_i\Big|>t\Big)\le 2^{kd+1}\exp\Big(-\frac{t^2}{2\bar v_k^2}\Big), \  \bar v_k^2:=\max_{m\in[2^k]^d}\Var\Big(\sum_{i\in[1,m]}H_i\Big).\]
Choosing $t:=\{2(d+2)(k+1)\}^{1/2}\bar v_k$, the right-hand side is at most $ 2\cdot4^{-k}$. Finally, \eqref{eq:prefix-var} yields $t\lesssim w_A(k)$, with $w_A(k)$ as in \eqref{eq:wA}. This completes the proof.
\end{proof}

\section{\texorpdfstring{Auxiliary results}{Auxiliary results}}\label{sec:helper-results}
\subsection{Auxiliary results for Theorem \ref{thm:main}}\label{ssc:aux-main}

\begin{proposition}\label{prop:Q-blocks}
   Grant the assumptions of Lemma \ref{lem:maximal} for $(X_i)$, i.e., $\Delta_p<\infty$ for some $p>2$. Further assume that there exists a constant $A>0$ such that $ \Delta_p^{[m]}\lesssim (1+m)^{-A},\ m\ge1.$  Recall $\cQ$ from \eqref{eq:cq}, and for each $Q \in \cQ$, consider $\BX^Q$ as defined in \eqref{eq:Q-blocks}. Suppose $Q$ has sides of length $L$, where $L=2^k$ for some $k\in \N \cup \{0\}$. Then, with implied constants depending only on $d$, $p$, $A$ and $\Delta_p$,
    \[ \big\| \max_{R \preceq Q} |\sum_{i\in R}(X_i -  X_i^Q)|\big\|_p \lesssim \begin{cases}
        2^{k(d-1)/2},& A>1/2,\\
        2^{k(d-1)/2} (k+1), & A=1/2,\\
        2^{k(d/2-A)}, & 0 < A < 1/2.
    \end{cases} \]
\end{proposition}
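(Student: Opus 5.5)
We bound the discrepancy $X_i-X_i^Q$ by splitting it along the localization scale $\dist_\infty(i,Q^c)$. For $i\in Q$ let $t_i:=\dist_\infty(i,\Z^d\setminus Q)-1\ge0$ be the depth of $i$ inside $Q$. Then $X_i^{[t_i]}$ is a function of innovations inside $Q$ only, so $X_i^{[t_i]}=(X_i^Q)^{[t_i]}$, where the local approximation of the surrogate field uses the same Borel function $g_{t_i}$. This gives
\[
X_i-X_i^Q=\big(X_i-X_i^{[t_i]}\big)-\big(X_i^Q-(X_i^Q)^{[t_i]}\big).
\]
The two brackets have the same law structure, since $\BX^Q\overset{d}{=}(X_i)$, so it suffices to control the maximal rectangular sums of the first bracket. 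Because $t_i$ varies with $i$, I would not use it directly. Instead, I would decompose $Q$ into dyadic layers $L_\ell:=\{i\in Q:2^{\ell}-1\le t_i<2^{\ell+1}-1\}$ for $\ell=0,\dots,k-1$. On $L_\ell$ I use the fixed radius $m_\ell:=2^{\ell}-1$ (radius $0$ meaning the conditional expectation given $\eta_i$ alone), which is at most $t_i$. Hence $X_i^{[m_\ell]}$ is still a function of innovations in $Q$, and the identity above holds with $m_\ell$ in place of $t_i$.

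\textbf{Step 1 (one layer).} Fix $\ell$. For any $R\preceq Q$, the set $R\cap L_\ell$ is a union of at most $2d$ rectangular slabs of thickness at most $2^{\ell+1}$ and other sides at most $2^k$. I would apply the maximal inequality Lemma~\ref{lem:maximal} to the residual field $(X_i-X_i^{[m_\ell]})_i$, whose functional-dependence sum is $\Delta_p^{[m_\ell]}\lesssim 2^{-\ell A}$, on each slab separately. The inequality is Burkholder/M\'oricz type, giving $\|\max_R|\sum|\|_p\lesssim \Delta_p\,|\text{slab}|^{1/2}$ uniformly over anchored subrectangles. This yields
\[
\Big\|\max_{R\preceq Q}\Big|\sum_{i\in R\cap L_\ell}(X_i-X_i^{[m_\ell]})\Big|\Big\|_p\lesssim 2^{-\ell A}\,(2^{\ell}2^{k(d-1)})^{1/2}=2^{k(d-1)/2}2^{\ell(1/2-A)}.
\]
The same bound holds for the surrogate term. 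One subtlety needs checking: maximizing over $R\preceq Q$ intersected with a slab must still be a maximum over anchored rectangles within that slab, up to splitting into $O(1)$ pieces and inclusion--exclusion ($2^d$ terms). I would verify this for the slab geometry; the slabs adjacent to the upper faces of $Q$ intersect $R$ in corner-anchored pieces.

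\textbf{Step 2 (the deep core).} Sites with $t_i\ge 2^{k-1}$ hardly arise, since $t_i<2^{k-1}$ for every $i$ in a cube of side $2^k$. The layers therefore cover $Q$ entirely. Summing over $\ell$ by Minkowski gives
\[
2^{k(d-1)/2}\sum_{\ell=0}^{k-1}2^{\ell(1/2-A)}.
\]
This is $\lesssim 2^{k(d-1)/2}$ if $A>1/2$, $\lesssim 2^{k(d-1)/2}(k+1)$ if $A=1/2$, and $\lesssim 2^{k(d-1)/2}2^{k(1/2-A)}=2^{k(d/2-A)}$ if $A<1/2$. These are exactly the three regimes of Proposition~\ref{prop:Q-blocks}. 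For $\ell=0$ the radius is $0$; there I would use $\Delta_p^{[0]}\lesssim\Delta_p$ via conditional Jensen, as in the treatment of $\Theta_{0,p}$.

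\textbf{Main obstacle.} The delicate point is Step 1's use of Lemma~\ref{lem:maximal} for the surrogate residual $X_i^Q-(X_i^Q)^{[m_\ell]}$ restricted to non-rectangular layer pieces. It requires that the functional-dependence coefficients of the surrogate field equal those of the original, which holds since $\BX^Q\overset{d}{=}(X_i)$ jointly with innovations. It also requires that the maximal inequality is uniform over anchored subrectangles of a thin slab without losing powers of $\log$. If Lemma~\ref{lem:maximal} only covers cubes, I would instead invoke a M\'oricz-type maximal inequality from variance bounds on arbitrary rectangles (superadditive $g(R)=|R|$). This gives the bound without extra logarithms when $p>2$.
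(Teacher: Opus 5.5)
Your proposal is correct and is essentially the paper's proof: peel $Q$ into dyadic depth layers, split each layer into at most $2d$ slabs of size $\lesssim 2^{j}L^{d-1}$, and use that on a layer the radius-$2^{j-1}$ local approximations of $X_i$ and $X_i^Q$ coincide (your radius $2^\ell-1$). The difference is then a difference of two residuals, to which Lemma~\ref{lem:maximal} applies with $\Delta_p^{[2^{j-1}]}\lesssim 2^{-jA}$, followed by summing the geometric series.

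There are only two cosmetic differences. First, the paper handles the boundary layer directly, applying $\Delta_p$ to $X$ and $X^Q$ separately; your radius-$0$ residual with $\Delta_p^{[0]}\le 2\Delta_p$ also works. Second, the anchoring issue you flag is immediate once the slabs $P$ are fixed independently of $R$: since $R\preceq Q$ starts at the lower-left corner of $Q$, each $R\cap P$ is either empty or satisfies $R\cap P\preceq P$. So no inclusion--exclusion is needed.
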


\begin{lemma}[Vorob'ev--Berkes--Philipp coupling; Theorem 1.31 of \cite{dudley1999uniform}]\label{lem:gluing}
Let $E,F,H$ be Polish spaces with their Borel $\sigma$-fields. Let $\alpha$ be a probability measure on $E\times F$ and $\beta$ a probability measure on $F\times H$. If the $F$-marginals of $\alpha$ and $\beta$ agree, then there exists a probability measure $\tau$ on $E\times F\times H$ whose $(E,F)$-marginal is $\alpha$ and whose $(F,H)$-marginal is $\beta$.
\end{lemma}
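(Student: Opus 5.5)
The plan is to glue $\alpha$ and $\beta$ by making the $E$- and $H$-coordinates conditionally independent given the common $F$-coordinate. Let $\nu$ denote the common $F$-marginal of $\alpha$ and $\beta$. The whole construction rests on disintegrating both measures along $F$.

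\emph{Step 1: disintegration.} Since $E$ and $H$ are Polish, the disintegration theorem for regular conditional distributions gives Markov kernels $y\mapsto\alpha_y$ from $F$ to $E$ and $y\mapsto\beta_y$ from $F$ to $H$ such that
\[
\alpha(A\times B)=\int_B\alpha_y(A)\,\nu(\mathrm dy),\qquad
\beta(B\times C)=\int_B\beta_y(C)\,\nu(\mathrm dy),
\]
for Borel $A\subseteq E$, $B\subseteq F$, $C\subseteq H$. Existence here is the usual Polish-space result (e.g.\ \cite{kallenberg2002foundations}). It is the only place where topology enters, and it is the step I regard as the main obstacle: without a standard Borel structure, regular conditional laws may fail to exist. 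Each kernel is unique up to $\nu$-null sets, and that ambiguity is harmless below.

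\emph{Step 2: definition of $\tau$.} For Borel $A\subseteq E$, $B\subseteq F$, $C\subseteq H$, set
\[
\tau(A\times B\times C):=\int_B\alpha_y(A)\,\beta_y(C)\,\nu(\mathrm dy).
\]
The map $y\mapsto\alpha_y(A)\beta_y(C)$ is measurable, being a product of measurable functions. More generally, $y\mapsto(\alpha_y\otimes\beta_y)(D)$ is measurable for every Borel $D\subseteq E\times H$, by a monotone class ($\pi$--$\lambda$) argument starting from measurable rectangles. Hence $\tau(\mathrm dx,\mathrm dy,\mathrm dz)=\nu(\mathrm dy)\,(\alpha_y\otimes\beta_y)(\mathrm dx,\mathrm dz)$ is a well-defined probability measure on $E\times F\times H$. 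Countable additivity follows from monotone convergence, and since the rectangles generate the product $\sigma$-field, the formula above determines $\tau$ uniquely.

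\emph{Step 3: marginals.} Take $C=H$. Then $\beta_y(H)=1$ gives $\tau(A\times B\times H)=\int_B\alpha_y(A)\,\nu(\mathrm dy)=\alpha(A\times B)$. Two probability measures on $E\times F$ that agree on rectangles coincide, so the $(E,F)$-marginal of $\tau$ is $\alpha$. Symmetrically, taking $A=E$ shows that the $(F,H)$-marginal of $\tau$ is $\beta$, which completes the argument.
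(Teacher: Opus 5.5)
Your argument is correct: disintegrating $\alpha$ and $\beta$ over the common $F$-marginal and then making the $E$- and $H$-coordinates conditionally independent given the $F$-coordinate is the standard proof of the Vorob'ev--Berkes--Philipp lemma. The paper gives no proof of its own and simply cites Theorem 1.31 of Dudley, whose proof uses the same conditional-independence construction, so your proposal matches the intended argument.
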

See also \cite{vorobev1962consistent} and \cite{berkes1979approximation}. 

\begin{lemma}\label{lem:transfer}
Let $(\Omega_0,\Fcal_0,\IP_0)$ be a probability space, let $\mathbb{G}$ and $\mathbb{H}$ be standard Borel spaces, and let $\BX:\Omega_0\to\mathbb{G}$ be a random element with law $\mu$. Suppose $\tau$ is a probability measure on $\mathbb{G}\times\mathbb{H}$ with first marginal $\mu$. Then there is a probability measure $\tilde\IP$ on $(\tilde\Omega,\tilde\Fcal):=(\Omega_0\times\mathbb{H},\Fcal_0\otimes\mathcal{B}(\mathbb{H}))$ such that
\begin{enumerate}[label=\textup{(\alph*)}]
    \item the projection of $\tilde\IP$ onto $(\Omega_0,\Fcal_0)$ is exactly $\IP_0$;
    \item if $Z$ denotes the $\mathbb{H}$-coordinate, then $\Lcal_{\tilde\IP}(\BX,Z)=\tau$.
\end{enumerate}
\end{lemma}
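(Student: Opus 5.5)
The plan is to build $\tilde\IP$ by disintegrating $\tau$ along its first coordinate and then composing the resulting conditional kernel with the given random element $\BX$. That is, we let the $\mathbb{H}$-coordinate be drawn, given $\omega$, from the conditional law of the second coordinate of $\tau$ given that the first coordinate equals $\BX(\omega)$.

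\textbf{Step 1: disintegration.} Since $\mathbb{G}$ and $\mathbb{H}$ are standard Borel, the disintegration theorem (e.g.\ \cite[Theorem 6.3]{kallenberg2002foundations}) yields a probability kernel $\kappa:\mathbb{G}\times\mathcal B(\mathbb{H})\to[0,1]$ with the following properties:
\begin{itemize}
\item $g\mapsto\kappa(g,C)$ is Borel for every $C\in\mathcal B(\mathbb{H})$;
\item $C\mapsto\kappa(g,C)$ is a probability measure for every $g$;
\item $\tau(A\times C)=\int_A\kappa(g,C)\,\mu(\mathrm{d}g)$ for all Borel $A\subseteq\mathbb{G}$ and $C\subseteq\mathbb{H}$.
\end{itemize}
The third property uses that the first marginal of $\tau$ is $\mu$. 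I expect this to be the only step with real content. It is exactly where the standard Borel hypothesis on $\mathbb{H}$ enters, since regular conditional distributions need not exist on general spaces.

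\textbf{Step 2: the composed kernel.} Define $K(\omega,C):=\kappa(\BX(\omega),C)$. This is a probability kernel from $(\Omega_0,\Fcal_0)$ to $\mathbb{H}$, because the composition of the $\Fcal_0$-measurable map $\BX$ with the Borel map $\kappa(\cdot,C)$ is $\Fcal_0$-measurable. By the standard construction of a measure from an initial law and a kernel (Ionescu-Tulcea in its one-step form), there is a unique probability measure $\tilde\IP$ on $\Fcal_0\otimes\mathcal B(\mathbb{H})$ with
\[
\tilde\IP(F\times C)=\int_F\kappa(\BX(\omega),C)\,\IP_0(\mathrm{d}\omega),\qquad F\in\Fcal_0,\ C\in\mathcal B(\mathbb{H}).
\]

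\textbf{Step 3: verification.} Taking $C=\mathbb{H}$ gives $\tilde\IP(F\times\mathbb{H})=\IP_0(F)$, which proves (a). For (b), fix Borel sets $A\subseteq\mathbb{G}$ and $C\subseteq\mathbb{H}$, and take $F=\{\BX\in A\}$. The change-of-variables formula for the image measure $\mu=\IP_0\circ\BX^{-1}$ gives
\[
\tilde\IP(\BX\in A,\,Z\in C)=\int\ind_A(\BX(\omega))\,\kappa(\BX(\omega),C)\,\IP_0(\mathrm{d}\omega)=\int_A\kappa(g,C)\,\mu(\mathrm{d}g)=\tau(A\times C).
\]
Measurable rectangles form a $\pi$-system generating $\mathcal B(\mathbb{G})\otimes\mathcal B(\mathbb{H})$. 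Dynkin's $\pi$–$\lambda$ theorem therefore upgrades this identity to $\Lcal_{\tilde\IP}(\BX,Z)=\tau$, which completes the proof.
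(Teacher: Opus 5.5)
Your proposal is correct and follows the paper's proof essentially verbatim. Both disintegrate $\tau$ as $\mu(\mathrm{d}x)K(x,\mathrm{d}z)$ using the standard Borel hypothesis, set $\tilde\IP(\mathrm{d}\omega,\mathrm{d}z)=\IP_0(\mathrm{d}\omega)K(\BX(\omega),\mathrm{d}z)$, and check the two marginals. The only cosmetic difference is that you verify (b) on measurable rectangles and extend via the $\pi$--$\lambda$ theorem, whereas the paper integrates bounded Borel test functions $\varphi(\BX(\omega),z)$ directly.
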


\subsection{Auxiliary results for Theorem \ref{thm:GA}}\label{ssc:aux-GA}

\begin{lemma}\label{lem:M_h}
    Grant the assumptions of Theorem \ref{thm:GA}, and recall the notation of Appendix \ref{sec:proof-GA}. For $0\le s\le k$ and $h\in H_Q$, recall  $D_{h,b}^{(s)}=\zeta_{h,b}v^{(s)}_{h,b}$ from \eqref{eq:D_hb}, and denote $M_h^{(s)} := (\sum_{b=1}^B |D_{h,b}^{(s)}|^2)^{1/2}=(\sum_{b=1}^B |v_{h,b}^{(s)}|^2)^{1/2}$. Then, with $a_{s,h}:=\sum_{i\in Q}\delta^{(s)}_{i-h,p}\le\Theta_{s,p}$,
    \[ \| M_h^{(s)}\|_p \lesssim (k+1)^{d/2} a_{s,h}\lesssim (k+1)^{d/2}\Theta_{s,p},\]
    where the implied constants depend only on $p$; in particular, they do not depend on $B$, $h$ or $k$.
\end{lemma}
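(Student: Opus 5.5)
The plan is to bound the square function $M_h^{(s)}$ by the $L^p$ norm of the martingale it comes from, and then bound that martingale by a coupling estimate. That estimate is a single-site replacement of the innovation $\eta_h$, which brings in the functional dependence coefficients $\delta^{(s)}_{\cdot,p}$.

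\emph{Step 1: Burkholder in $\R^Q$.} Fix $s$ and $h=h_j$. For $b\in[B]$, the increments $D^{(s)}_{h,b}=\IE[Z^{(s)}\mid\cG_{h,b}]-\IE[Z^{(s)}\mid\cG_{h,b-1}]$ are martingale differences with respect to the sub-filtration $(\cG_{h,b})_{0\le b\le B}$, and they take values in the Hilbert space $(\R^Q,|\cdot|)$. I will apply Burkholder's square-function inequality for Hilbert-space-valued martingales \citep{burkholder1988sharp}, whose constants depend only on $p$ and not on the dimension $2^{kd}$ or on $B$. This gives
\[
\|M_h^{(s)}\|_p\lesssim \Big\|\Big|\sum_{b=1}^B D^{(s)}_{h,b}\Big|\Big\|_p
=\big\||\IE[Z^{(s)}\mid\cG_{h,B}]-\IE[Z^{(s)}\mid\cG_{h,0}]|\big\|_p .
\]
I expect this to be the only delicate point. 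A coordinatewise scalar Burkholder inequality would lose a dimension-dependent factor, so the Hilbert-space (dimension-free) version is essential.

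\emph{Step 2: replacement of one innovation.} Let $\eta'_h$ be an independent copy of $\eta_h$, and let $Z^{(s)}_{\{h\}}$ be obtained from $Z^{(s)}$ by replacing $\eta_h$ with $\eta'_h$, using the same functions $g_{2^s},g_{2^{s-1}}$. Then $Z^{(s)}_{\{h\}}$ is independent of the bits $B_{h,\cdot}$. Conditioning on $\cG_{h,B}$ integrates out $\eta_h'$ and all sites revealed after $h$, so
\[
\IE[Z^{(s)}_{\{h\}}\mid\cG_{h,B}]=\IE[Z^{(s)}\mid\cG_{h,0}].
\]
Hence the martingale in Step 1 equals $\IE[Z^{(s)}-Z^{(s)}_{\{h\}}\mid\cG_{h,B}]$, where we extend $\cG_{h,B}$ trivially by $\eta'_h$. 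The truncation at $B$ bits plays no role here, because we only use conditional expectations. Conditional Jensen then gives
\[
\|M_h^{(s)}\|_p\lesssim\big\||\HA(\Xi^{(s)}-\Xi^{(s)}_{\{h\}})|\big\|_p .
\]

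\emph{Step 3: Haar columns and stationarity.} By \eqref{eq:haar-column},
\[
\big\||\HA(\Xi^{(s)}-\Xi^{(s)}_{\{h\}})|\big\|_p\le(k+1)^{d/2}\sum_{i\in Q}\|\xi^{(s)}_i-\xi^{(s)}_{i,\{h\}}\|_p .
\]
By stationarity, shifting by $h$ turns the replacement at $h$ into a replacement at the origin, so $\|\xi^{(s)}_i-\xi^{(s)}_{i,\{h\}}\|_p=\delta^{(s)}_{i-h,p}$. Summing over $i\in Q$ gives $a_{s,h}$, and trivially $a_{s,h}\le\sum_{j\in\Z^d}\delta^{(s)}_{j,p}=\Theta_{s,p}$. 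The only constants introduced are Burkholder's, which depend only on $p$, so the bound is uniform in $B$, $h$ and $k$.
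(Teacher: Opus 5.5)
Your proposal is correct and follows the paper's proof essentially step for step: the dimension-free Hilbert-space Burkholder square-function inequality in $(\R^Q,|\cdot|)$, then the single-site replacement identity $\IE[Z^{(s),\{h\}}\mid\cG_{h,B}]=\IE[Z^{(s)}\mid\cG_{h,0}]$ followed by conditional Jensen, and finally \eqref{eq:haar-column} combined with stationarity. The one step to state more carefully is that identity. It needs $(B_{h,b})_{b\in[B]}$ to be independent of $\sigma(Z^{(s),\{h\}})\vee\cG_{h,0}$ jointly, not merely of $Z^{(s),\{h\}}$, and it needs $Z^{(s),\{h\}}$ together with the generators of $\cG_{h,0}$ to have the same joint law as $Z^{(s)}$ together with those generators, which is exactly how the paper argues it.
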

\begin{lemma} \label{lem:cov-2nd-term}
    Grant the assumptions of Theorem \ref{thm:GA}, and recall $D_{h,b}$, $V_{h,b}=D_{h,b}D_{h,b}^\top$ and $Z_B$ from Appendix \ref{sec:proof-GA}. Then $\Cov(Z_B)= \IE[\sum_{h,b} V_{h,b}]$, and moreover,
    \[ \IE\Big[\rho^\star\big(\sum_{h,b} V_{h,b} - \Cov(Z_B)\big)\Big] \lesssim 2^{2kd/ r} (k+1)^{d+1 - 2/r} \nu_A, \ r=p\wedge 4,\]
    where $\nu_A$ is as defined in Theorem \ref{thm:GA}, and the implied constant does not depend on $k$ or $B$.
\end{lemma}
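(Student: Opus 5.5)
\emph{Identity.} The identity $\Cov(Z_B)=\IE[\sum_{h,b}V_{h,b}]$ is immediate. By Lemma~\ref{lem:rademacher}(i), the $D_{h,b}$ are square-integrable martingale differences summing to $Z_B$, with $\IE Z_B=0$, so the cross terms vanish and $\Cov(Z_B)=\sum_{h,b}\IE[D_{h,b}D_{h,b}^\top]$. By Lemma~\ref{lem:rademacher}(iii), $D_{h,b}D_{h,b}^\top=V_{h,b}$. Hence the quantity to control is the centered random matrix $\Gamma:=\sum_{h}(P_h-\IE P_h)$, where $P_h:=\sum_b v_{h,b}v_{h,b}^\top$ is positive semidefinite with $\operatorname{tr}P_h=M_h^2$.

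\emph{Martingale decomposition in Schatten norm.} I would not attack $\rho^\star(\Gamma)$ directly. Instead I would bound it by a Schatten norm, $\rho^\star(\Gamma)\le\|\Gamma\|_{S_q}$, and expand $\Gamma$ again as a martingale along the site filtration: $\Gamma=\sum_{j}\mathcal P_j\Gamma$, where $\mathcal P_j:=\IE[\cdot\mid\cG_{h_j,B}]-\IE[\cdot\mid\cG_{h_{j-1},B}]$. For $q=r/2\in(1,2]$, a von Bahr--Esseen/Burkholder-type inequality in $S_q$ (noncommutative, in the spirit of \citet{tropp2012user}) should give
\[
\IE\|\Gamma\|_{S_q}^q\lesssim C_q\sum_j\IE\|\mathcal P_j\Gamma\|_{S_q}^q.
\]
The dimensional/smoothness constant of $S_q$ is what I expect to produce the logarithmic factor $(k+1)^{1-2/r}$; if needed, I would interpolate with $q\asymp\log(2^{kd})$ when $p\ge4$.

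\emph{Bounding each projection.} For each $j$, I would bound $\|\mathcal P_j\Gamma\|_{S_q}$ by a coupling argument. Replace the bits at site $h_j$ by independent copies; then $\|\mathcal P_j\Gamma\|\le\|\Gamma-\Gamma^{(j)}\|$, and only the summands $P_h$ whose vectors $v_{h,b}$ depend on $\eta_{h_j}$ change. I would expand each $v_{h,b}=\sum_s v^{(s)}_{h,b}$ by scale. By \eqref{eq:xi-local}, scale $s$ contributes only for $|h-h_j|_\infty\lesssim2^s$. The size of each change is governed by $\delta^{(s)}_{\cdot,p}$ and the Burkholder bound of Lemma~\ref{lem:M_h}. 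Using $\|aa^\top-bb^\top\|_{S_q}\le|a-b|(|a|+|b|)$, Hölder with exponents $p/2$ and $r/2$, and \eqref{eq:Mh-bound}, I would aim for
\[
\big\|\|\mathcal P_j\Gamma\|_{S_q}\big\|_{q}\lesssim(k+1)^{d}\sum_{s=0}^k 2^{sd(1-2/r)}\,\Theta_{s,p}^{\,2/r}\cdots .
\]
With $\Theta_{s,p}\lesssim2^{-sA}$ this yields $\sum_s2^{s(c-A)}\asymp\nu_A$, where the factor $2^{sd(1-2/r)}$ is the cost of $2^{sd}$ affected sites entering through the $\ell^{q}$-versus-$\ell^1$ comparison. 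Summing the $q$-th powers over the $|H_Q|\asymp2^{kd}$ sites and taking the $q$-th root then gives $2^{2kd/r}$. Combining the pieces should give $2^{2kd/r}(k+1)^{d+1-2/r}\nu_A$.

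\emph{Main obstacle.} The hardest step is the projection bound. The vectors $v_{h,b}$ are conditional expectations given bits revealed in a lexicographic order, so $P_h$ depends on all earlier sites, not only on those near $h$. I would first try to show that, conditionally, $v^{(s)}_{h,b}$ is a $\cG$-martingale transform of the field $\xi^{(s)}$ restricted to $|i-h|_\infty\le2^s$. Changing $\eta_{h_j}$ then perturbs $v^{(s)}_{h,b}$ only through that window, with $L^p$-size $\lesssim\sum_i\delta^{(s)}_{i-h_j,p}$ summed against the Haar column norms of Lemma~\ref{lem:Haar-algebra}(c). The bookkeeping must be arranged so that the scale sum produces exactly $\nu_A$ and not a cruder factor such as $(k+1)\nu_A$.
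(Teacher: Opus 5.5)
Your derivation of $\Cov(Z_B)=\IE[\sum_{h,b}V_{h,b}]$ is correct and is exactly the paper's argument. The gap is in the main bound.

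Split $\Gamma$ into scale pieces $\Gamma^{(s,t)}:=\sum_h(T^{s,t}_h-\IE T^{s,t}_h)$. Here $T^{s,t}_h:=\sum_b(v^{(s)}_{h,b}v^{(t)\top}_{h,b}+v^{(t)}_{h,b}v^{(s)\top}_{h,b})$ for $t<s$, and $T^{s,s}_h:=\sum_b v^{(s)}_{h,b}v^{(s)\top}_{h,b}$. Everything rests on your per-projection estimate for $\|\mathcal P_j\Gamma\|_{S_q}$. You leave it unproved, and the mechanism you describe cannot produce the factor $2^{sd(1-2/r)}$ behind $\nu_A$. (The displayed target is also inconsistent. $\Gamma$ is quadratic in the field, so the natural size of the $(s,t)$ piece is $(k+1)^d\Theta_{s,p}\Theta_{t,p}$, and an expression in $\Theta_{s,p}^{2/r}$ does not become $\nu_A$ after inserting $\Theta_{s,p}\lesssim2^{-sA}$.)

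To see the loss concretely:
\begin{itemize}
\item Replacing the bits at $h_j$ perturbs the $\asymp2^{sd}$ matrices $T^{s,t}_h$ with $|h-h_j|_\infty\lesssim2^s$.
\item The perturbation of the square function at $h$ is a mixed second-order difference in $(\eta_h,\eta_{h_j})$. First-order coefficients $\delta^{(s)}_{\cdot,p}$ bound it only at the same order $(k+1)^{d/2}\Theta_{s,p}$ as $M^{(s)}_h$ itself. When the coefficients are spread over the window, this holds for a positive fraction of these $h$.
\item The triangle inequality therefore gives only $\|\mathcal P_j\Gamma^{(s,t)}\|\lesssim2^{sd}(k+1)^d\Theta_{s,p}\Theta_{t,p}$.
\item Your type-$q$ inequality with $q=r/2$ over the $\asymp2^{kd}$ sites then returns $2^{2kd/r}(k+1)^d\sum_{s\le k}2^{s(d-A)}$.
\item For $c<A<d$, which the hypotheses allow, this is of order $2^{2kd/r}(k+1)^d2^{k(d-A)}$. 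But there $\nu_A\asymp1$, so the lemma asserts $\lesssim2^{2kd/r}(k+1)^{d+1-2/r}$.
\end{itemize}
The scaling $2^{sd(1-2/r)}=m^{1-1/q}$, with $m\asymp2^{sd}$, is the signature of splitting an $m$-dependent sum into $m$ independent subsums. A site-by-site martingale with $q<2$ does not capture it. Separately, for fixed $q\in(1,2]$ the martingale-type constant of $S_q$ is dimension-free, so it is not where $(k+1)^{1-2/r}$ comes from.

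The missing idea is that your ``main obstacle'' disappears scale by scale, and with it the need for a second martingale decomposition.
\begin{itemize}
\item If $|i-h|_\infty>2^s$, then $\xi^{(s)}_i$ does not involve $U_h$. So $B_{h,b}$ is independent of $\sigma(\xi^{(s)}_i)\vee\cG_{h,b-1}$, and the $i$th summand of $v^{(s)}_{h,b}$ vanishes.
\item If $|i-h|_\infty\le2^s$, then $\xi^{(s)}_i\ind\{B_{h,b}=\eps\}$ depends only on $(U_u)_{|u-h|_\infty\le2^{s+1}}$. The revealed bits at farther sites are independent of it and drop out of the conditional expectation given $\cG_{h,b-1}$.
\end{itemize}
Hence, for $t\le s$, $T^{s,t}_h$ is measurable with respect to the bits at sites within $2^{s+1}$ of $h$, however the sites are ordered.

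The paper then argues as follows:
\begin{itemize}
\item It splits $H_Q$ into the $\asymp2^{sd}$ residue classes modulo $2^{s+2}+1$. Within each class the centered $T^{s,t}_h$ are independent.
\item It bounds each summand by $\|\rho^\star(T^{s,t}_h)\|_{p/2}\le2\|M^{(s)}_h\|_p\|M^{(t)}_h\|_p\lesssim(k+1)^d\Theta_{s,p}\Theta_{t,p}$.
\item It applies the truncated matrix Bernstein bound of Lemma~\ref{lem:matrix-moment} to each class, with $q=p/2$, $H\lesssim2^{(k-s)d}$ and $D=2^{kd}$.
\item Adding the classes by the triangle inequality gives $2^{2kd/r}2^{sd(1-2/r)}(k+1)^{d+1-2/r}\Theta_{s,p}\Theta_{t,p}$. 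The logarithmic factor comes from $\log(2D)$.
\item Summing over $0\le t\le s\le k$ yields exactly $\nu_A$.
\end{itemize}
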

\begin{lemma}\label{lem:matrix-moment}
Let $X_1,\dots,X_H$ be independent centered self-adjoint $D\times D$ random matrices. Let $q>1$, $q_\star=q\wedge 2$, and suppose
\begin{equation}\label{eq:matrix-q-bound}
  \max_{1\le j\le H}\|\rho^\star(X_j)\|_q \le K.
\end{equation}
Then
\begin{equation}\label{eq:matrix-moment-bound}
  \E\bigg[\rho^\star\!\Big(\sum_{j=1}^H X_j\Big)\bigg]
  \lesssim K H^{1/q_\star}[\log(2D)]^{1-1/q_\star}.
\end{equation}
\end{lemma}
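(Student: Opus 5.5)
The plan is to symmetrize first and then bound the resulting Rademacher matrix series conditionally on the summands. Let $(\eps_j)_{j\le H}$ be i.i.d.\ Rademacher signs independent of $(X_j)$. The standard symmetrization inequality holds for any norm and for independent centered summands, in particular for the operator norm $\rho^\star$. It gives
\[
\E\Big[\rho^\star\Big(\sum_j X_j\Big)\Big]\le 2\,\E\Big[\rho^\star\Big(\sum_j\eps_jX_j\Big)\Big].
\]
Conditionally on $(X_j)$, the sum $\sum_j\eps_jX_j$ is a Rademacher series with fixed self-adjoint coefficients. The matrix Khintchine inequality (e.g.\ \cite{tropp2012user}) then yields
\[
\E_\eps\Big[\rho^\star\Big(\sum_j\eps_jX_j\Big)\Big]\lesssim\sqrt{\log(2D)}\;\rho^\star\Big(\sum_jX_j^2\Big)^{1/2}\le\sqrt{\log(2D)}\Big(\sum_j\rho^\star(X_j)^2\Big)^{1/2}.
\]
Write $L:=\log(2D)$ throughout.

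\emph{Case $q\ge2$, so $q_\star=2$.} We take expectations in the display above and apply Jensen's inequality. This gives $\E(\sum_j\rho^\star(X_j)^2)^{1/2}\le(\sum_j\|\rho^\star(X_j)\|_2^2)^{1/2}\le KH^{1/2}$, since $\|\cdot\|_2\le\|\cdot\|_q$. The result is the claimed bound $KH^{1/2}L^{1/2}$.

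\emph{Case $1<q<2$, so $q_\star=q$.} The direct bound from Khintchine only yields $KH^{1/q}L^{1/2}$, which has the wrong power of the logarithm. The main step is therefore a truncation that interpolates between Khintchine and the trivial triangle-inequality bound. Fix a level $t>0$ and split
\[
\eps_jX_j=\eps_jX_j\ind\{\rho^\star(X_j)\le t\}+\eps_jX_j\ind\{\rho^\star(X_j)>t\}.
\]
Because the truncation depends only on $X_j$ and the signs are independent, both pieces are still Rademacher series, so no recentering is needed.

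For the large part we use the triangle inequality and Markov's inequality:
\[
\E\sum_j\rho^\star(X_j)\ind\{\rho^\star(X_j)>t\}\le t^{1-q}\sum_j\E\rho^\star(X_j)^q\le HK^qt^{1-q}.
\]
For the small part we apply the conditional Khintchine bound and then Jensen's inequality:
\[
\sqrt L\Big(\sum_j\E\big[\rho^\star(X_j)^2\ind\{\rho^\star(X_j)\le t\}\big]\Big)^{1/2}\le\sqrt L\,(HK^qt^{2-q})^{1/2}.
\]
Balancing the two bounds gives the choice $t=KH^{1/q}L^{-1/q}$. With this $t$, both terms equal $KH^{1/q}L^{1-1/q}$, which is exactly \eqref{eq:matrix-moment-bound}.

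The only delicate point I expect is checking that the symmetrization and the truncation interact correctly. Symmetrization is used before truncating, so the truncated summands $\eps_jX_j\ind\{\rho^\star(X_j)\le t\}$ are conditionally symmetric and Khintchine applies to them without any centering correction. Everything else in the argument is routine.
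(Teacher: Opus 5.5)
Your proof is correct, but it takes a different route from the paper's.

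\textbf{The paper's route.} The paper never symmetrizes. When $H\ge L$, it truncates the $X_j$ themselves at level $\tau=K(H/L)^{1/q_\star}$ and recenters them, producing matrices $Y_j$ bounded by $2\tau$. It bounds the discarded part in expectation by $2HK^{q_\star}\tau^{1-q_\star}$. It then applies the matrix Bernstein inequality (Theorem~6.1 of Tropp) to $\pm\sum_j Y_j$ and integrates the tail, which gives $\sqrt{HK^{q_\star}\tau^{2-q_\star}L}+\tau L$. The case $H<L$ is treated separately by the triangle inequality. For $q<2$ your optimized level $t=K(H/L)^{1/q}$ is exactly the paper's $\tau$.

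\textbf{What your route buys.} Symmetrizing first lets you replace Bernstein by the sub-Gaussian bound for a matrix Rademacher series, applied conditionally on $(X_j)$. This has three consequences. There is no uniform-bound term $\tau L$ to balance. No recentering is needed, because you truncate only after symmetrization, when the summands are conditionally symmetric. And you need neither the $H<L$ case split nor, for $q\ge 2$, any truncation at all.

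\textbf{What it costs.} You pay a factor $2$ and rely on the symmetrization inequality for the operator norm. That inequality needs only independence, centering and integrability of $\rho^\star(X_j)$, and integrability holds here since $q>1$.

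\textbf{What the paper's route buys.} It works directly with the unsymmetrized summands and uses a single concentration inequality from the same reference.

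Both arguments land on the same bound $KH^{1/q_\star}[\log(2D)]^{1-1/q_\star}$. Yours is arguably the more streamlined of the two.
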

\begin{lemma}\label{lem:cov-compare}
Suppose that $\Delta_p<\infty$ for some $p\ge2$. Then, for every finite $Q\subset\Z^d$ and $m\ge1$, with $\Sigma_Q:=\Cov((X_i)_{i\in Q})$ and $\Sigma_Q^{[m]}:=\Cov((X^{[m]}_i)_{i\in Q})$,
\begin{equation}\label{eq:cov-compare}
    \rho^\star\big(\Sigma_Q-\Sigma_Q^{[m]}\big)\lesssim \Delta_p^{[m]}\big(2\Delta_p+\Delta_p^{[m]}\big).
\end{equation}
\end{lemma}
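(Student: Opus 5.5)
The plan is to split the covariance difference algebraically into cross-covariance matrices and bound each one in operator norm with a Schur-type test. The entries of these matrices will be controlled by a projection (martingale-difference) bound driven by the functional-dependence coefficients.

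\textbf{Step 1: algebraic split.} Write $R_i:=X_i-X_i^{[m]}$, so that $X_i=X_i^{[m]}+R_i$. Expanding the first argument, and then the second,
\[
\Sigma_Q-\Sigma_Q^{[m]}=\Cov\big((R_i)_{i\in Q},(X_j)_{j\in Q}\big)+\Cov\big((X^{[m]}_i)_{i\in Q},(R_j)_{j\in Q}\big).
\]
All three fields $(X_i)$, $(X^{[m]}_i)$ and $(R_i)$ are jointly stationary Bernoulli shifts of the same innovations, since $X^{[m]}_i=g_m(\cdot)$ is evaluated at shifted innovations. Hence each cross-covariance matrix has entries $c_{UV}(j-i):=\Cov(U_0,V_{j-i})$. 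By the Schur test,
\[
\rho^\star\big((c_{UV}(j-i))_{i,j\in Q}\big)\le\sum_{h\in\Z^d}|c_{UV}(h)|,
\]
which holds for non-symmetric matrices as well. It therefore suffices to prove, for two such stationary fields $U,V$ with dependence coefficients $\delta^U_{i,2},\delta^V_{i,2}$, the bound
\[
\sum_h|\Cov(U_0,V_h)|\le\Big(\sum_i\delta^U_{i,2}\Big)\Big(\sum_i\delta^V_{i,2}\Big).
\]

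\textbf{Step 2: projection bound (the main step).} Order $\Z^d$ lexicographically. Let $\Fcal_{\le j}$ be generated by the $\eta_l$ with $l\preceq j$, and set $P_jU:=\IE[U\mid\Fcal_{\le j}]-\IE[U\mid\Fcal_{<j}]$. Then $U_0-\IE U_0=\sum_jP_jU_0$ in $L^2$, and orthogonality of the projections gives
\[
|\Cov(U_0,V_h)|\le\sum_j\|P_jU_0\|\,\|P_jV_h\|.
\]
The key inequality is $\|P_jU_i\|\le\|U_i-U_i^{*(j)}\|=\delta^U_{i-j,2}$, where $U_i^{*(j)}$ replaces $\eta_j$ by an independent copy; the equality is by stationarity. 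To prove it, note that $\IE[U_i^{*(j)}\mid\Fcal_{\le j}]=\IE[U_i\mid\Fcal_{<j}]$, so $P_jU_i=\IE[U_i-U_i^{*(j)}\mid\Fcal_{\le j}]$, and conditional Jensen finishes. This is the delicate point: one has to check the measure-theoretic identity and the $L^2$ convergence of the projection expansion. Both follow since $\bigvee_j\Fcal_{\le j}$ is everything, and $\bigcap_j\Fcal_{\le j}$ is trivial by Kolmogorov's 0-1 law (or one first works with finitely many innovations and passes to the limit). Combining the pieces,
\[
\sum_h|\Cov(U_0,V_h)|\le\sum_h\sum_j\delta^U_{-j,2}\,\delta^V_{h-j,2}=\Delta^U_2\Delta^V_2.
\]

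\textbf{Step 3: identify the coefficients.} Since $p\ge2$, $L^2$ norms are dominated by $L^p$ norms. For $R$, the coefficients are exactly $\delta^{[m]}_{i,p}$, so $\Delta^R\le\Delta_p^{[m]}$. For $X$, we have $\Delta^X\le\Delta_p$. For $X^{[m]}=X-R$, the triangle inequality gives $\Delta^{X^{[m]}}\le\Delta_p+\Delta_p^{[m]}$. The two cross terms are therefore bounded by $\Delta_p^{[m]}\Delta_p$ and $(\Delta_p+\Delta_p^{[m]})\Delta_p^{[m]}$. Their sum is $\Delta_p^{[m]}(2\Delta_p+\Delta_p^{[m]})$, which is \eqref{eq:cov-compare}.
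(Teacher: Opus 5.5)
Your argument is correct, but it follows a different route from the paper. The paper never looks at matrix entries. It fixes a unit vector $a$ and writes $a^\top(\Sigma_Q-\Sigma_Q^{[m]})a=\IE[(Y-Y')(Y+Y')]$, with $Y=\sum_i a_iX_i$ and $Y'=\sum_i a_iX_i^{[m]}$. It then bounds $\|Y-Y'\|$ and $\|Y\|$ by the linear-form inequalities of Lemma~\ref{prop:coefficient}, which are imported from El Machkouri--Voln\'y--Wu. That proof is three lines and needs no Toeplitz structure, but its constant is whatever the Burkholder-type inequality delivers.

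You instead split the difference into two non-symmetric cross-covariance matrices. Joint stationarity plus the Schur test then reduce the problem to $\ell^1$-summability of cross-covariance functions. You prove that summability by the $L^2$ projection argument, which is the paper's proof of Lemma~\ref{lem:cov-tail} carried over from autocovariances to cross-covariances. Your version is self-contained, uses only $L^2$ coefficients, and gives the bound with constant $1$. The underlying mechanism is the same in both proofs. The $p=2$ case of Lemma~\ref{prop:coefficient} is the projection bound combined with Young's inequality $\|c*x\|_{\ell^2}\le\|c\|_{\ell^1}\|x\|_{\ell^2}$, and that is exactly what the Schur test encodes for a Toeplitz kernel.

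One justification in Step~2 should be tightened. For a filtration indexed by $\N$ or $\Z$, ``the join is everything and the intersection is trivial'' does give $U_0-\IE U_0=\sum_jP_jU_0$. The lexicographic order on $\Z^d$ with $d\ge2$ is not of that type, however. That reasoning alone does not rule out information entering at the cuts between consecutive lines.

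Your parenthetical finite-innovation approximation settles this. For a function of finitely many innovations the sum telescopes, and Bessel's inequality plus density finish the argument. It is simpler still to enumerate $\Z^d$ by $\N$, as in the proof of Lemma~\ref{lem:cov-tail}. Your key inequality $\|P_jU_i\|\le\delta^U_{i-j,2}$ never uses the ordering, so any enumeration works.

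Separately, conditional Jensen gives $\delta^{X^{[m]}}_{i,2}\le\delta_{i,2}$. Step~3 therefore in fact yields $2\Delta_p\Delta_p^{[m]}$, although this sharpening is not needed.
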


\subsection{Auxiliary results for Theorem \ref{thm:spectral}}\label{ssc:aux-spectral}

\begin{lemma}[Covariance tail]\label{lem:cov-tail}
Suppose that $\Delta_p<\infty$ for some $p\ge2$, and let $\gamma(h)=\Cov(X_0,X_h)$. Then $\sum_{h\in\Z^d}|\gamma(h)|\le\Delta_2^2<\infty$. If, moreover, \eqref{eq:residual-rate-main} holds, then
\begin{equation}\label{eq:cov-tail}
    \sum_{|h|_\infty>m}|\gamma(h)|\lesssim (1+m)^{-A},\  m\ge1.
\end{equation}
\end{lemma}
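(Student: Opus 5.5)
We prove Lemma~\ref{lem:cov-tail} by expanding each $X_i$ in the projection (martingale-difference) decomposition along the innovations, in the spirit of \citet{wu2005nonlinear} and \citet{el2013central}. Enumerate $\Z^d$ by a bijection $j\mapsto \ell(j)$ with $\N$, and let $\mathcal{P}_j:=\IE[\,\cdot\mid \mathcal{H}_j]-\IE[\,\cdot\mid\mathcal{H}_{j-1}]$, where $\mathcal{H}_j$ is generated by the innovations with index $\le j$ in this enumeration. Since $X_0\in L^2$ is centered and a function of the $\eta$'s, we have $X_h=\sum_{u}\mathcal{P}_u X_h$ in $L^2$. The projections are orthogonal, so
\[
  \gamma(h)=\sum_{u\in\Z^d}\IE[\mathcal{P}_uX_0\,\mathcal{P}_uX_h].
\]
The standard coupling bound gives $\|\mathcal{P}_uX_i\|_2\le \|X_i-X_{i,\{u\}}^\star\|_2=\delta_{i-u,2}$, where $X^\star_{i,\{u\}}$ denotes $X_i$ with $\eta_u$ replaced by an independent copy. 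To see this, write $\mathcal{P}_uX_i=\IE[X_i-X^\star_{i,\{u\}}\mid\mathcal{H}_u]$, use that $\IE[X^\star_{i,\{u\}}\mid \mathcal H_u]=\IE[X_i\mid\mathcal H_{u-1}]$, and apply Jensen's inequality together with stationarity. Cauchy--Schwarz then yields $|\gamma(h)|\le\sum_u\delta_{-u,2}\delta_{h-u,2}$. Summing over $h$ gives $\sum_h|\gamma(h)|\le\Delta_2^2\le\Delta_p^2<\infty$, which is the first claim.

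For the tail bound \eqref{eq:cov-tail}, fix $m$ and split $X_0=X_0^{[\lfloor m/2\rfloor]}+R_0$, and likewise $X_h=X_h^{[\lfloor m/2\rfloor]}+R_h$, where $R_i:=X_i-X_i^{[\lfloor m/2\rfloor]}$. If $|h|_\infty>m$, the two local pieces depend on disjoint innovation windows and hence are independent, so
\[
  \gamma(h)=\Cov(X_0^{[m/2]},R_h)+\Cov(R_0,X_h).
\]
Each cross covariance is treated exactly as above, now with two different fields. The projection identity combined with Cauchy--Schwarz gives
\[
  |\Cov(U_0,V_h)|\le\sum_u\delta^{U}_{-u,2}\,\delta^{V}_{h-u,2},
\]
where $\delta^U$ and $\delta^V$ are the functional dependence coefficients of the two stationary fields. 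Summing over $h$ (or simply over all $h$, which is larger) produces the product of the aggregated coefficients:
\[
  \sum_{|h|_\infty>m}|\gamma(h)|\le\Delta_2(X^{[m/2]})\,\Delta_2^{[m/2]}+\Delta_2^{[m/2]}\,\Delta_2.
\]
By conditional Jensen, $\Delta_2(X^{[m/2]})\le\Delta_2$, as in the $s=0$ computation in \S\ref{ssc:scales}. Moreover, $\Delta^{[m/2]}_2\le\Delta^{[m/2]}_p\lesssim(1+m)^{-A}$ by \eqref{eq:residual-rate-main}, since $(1+m/2)^{-A}\asymp(1+m)^{-A}$. This proves \eqref{eq:cov-tail}; the case of small $m$, where $\lfloor m/2\rfloor=0$, is absorbed into the constants via the first bound.

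The main technical point is the projection bound $\|\mathcal{P}_uU_i\|_2\le\delta^U_{i-u,2}$ for the fields $R_i$ and $X^{[m/2]}_i$. These are again Bernoulli shifts (through $g$ and the fixed function $g_{\lfloor m/2\rfloor}$), so the same coupling argument applies verbatim. The definition \eqref{eq:residual-dependence} of $\delta^{[m]}$ matches exactly the coefficient needed for $R$.
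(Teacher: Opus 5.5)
Your proof is correct. The first part (projection decomposition, $\|\mathcal P_uX_i\|_2\le\delta_{i-u,2}$, then $|\gamma(h)|\le\sum_u\delta_{-u,2}\delta_{h-u,2}$) is exactly the paper's argument. For the tail bound, however, you take a different route.

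You split $X_i=X_i^{[m']}+R_i$ with $m'=\lfloor m/2\rfloor$, and you kill $\Cov(X_0^{[m']},X_h^{[m']})$ using independence of disjoint windows. You then apply a two-field version of the projection bound to the stationary Bernoulli shifts $X^{[m']}$ and $R$. This needs the extra conditional-Jensen step $\Delta_2(X^{[m']})\le\Delta_2$.

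The paper never leaves the original coefficients. It observes that for $|i|_\infty>m$ the approximation $X_i^{[m]}$ does not involve $\eta_0$, so $\delta_{i,2}=\delta^{[m]}_{i,2}$, and hence $\Theta_2(m):=\sum_{|i|_\infty>m}\delta_{i,2}\le\Delta_p^{[m]}$. Since $|h|_\infty>m$ forces $|u|_\infty>m/2$ or $|h-u|_\infty>m/2$, the single bound $|\gamma(h)|\le\sum_u\delta_{-u,2}\delta_{h-u,2}$ already yields $\sum_{|h|_\infty>m}|\gamma(h)|\le 2\Delta_2\Theta_2(\lfloor m/2\rfloor)$.

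The paper's route is shorter: it needs no independence argument and no auxiliary fields. It is also slightly sharper, because it uses only decay of the tail sums $\Theta_2$, and $\Theta_2(m')\le\Delta_2^{[m']}$. Your route is more structural. It makes explicit that long-range covariance comes only from the residual, and it gives $2\Delta_2\Delta_2^{[m']}$ directly in terms of the stated hypothesis. Your handling of $m=1$ through the first bound matches the paper's.
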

\begin{lemma}\label{lem:spec-modulus}
Recall $f$ from \S\ref{ssc:spec-origin}. Let $s:=\min\{|\lambda|_\infty,1\}$. Then, for all $\lambda\in\mathbb{T}^d$,
\begin{equation}\label{eq:spec-modulus}
    |f(\lambda)-f(0)|\lesssim \begin{cases}
        s^A, & 0<A<2,\\
        s^2\log(e/s), & A=2,\\
        s^2, & A>2.
    \end{cases}
\end{equation}
\end{lemma}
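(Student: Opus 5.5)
The plan is to expand $f(\lambda)-f(0)$ as a cosine series and split the lattice sum at the scale $|h|_\infty\approx 1/s$. The two pieces are controlled by the covariance tail bound \eqref{eq:cov-tail} of Lemma~\ref{lem:cov-tail}.

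\textbf{Cosine representation.} By \S\ref{ssc:spec-origin}, the series for $f$ converges absolutely and $\gamma(-h)=\gamma(h)$ is real. Therefore
\[
f(\lambda)-f(0)=\sum_{h\in\Z^d}\gamma(h)\bigl(\cos\langle h,\lambda\rangle-1\bigr).
\]
We will use the elementary bound $|\cos x-1|\le\min\{2,x^2/2\}$ together with $|\langle h,\lambda\rangle|\le d\,|h|_\infty|\lambda|_\infty$.

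\textbf{The case $|\lambda|_\infty\ge1$.} Here $s=1$, and the crude bound gives $|f(\lambda)-f(0)|\le 2\sum_h|\gamma(h)|\le 2\Delta_2^2$, using the first part of Lemma~\ref{lem:cov-tail}. This constant is acceptable in every case of \eqref{eq:spec-modulus}.

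\textbf{The case $|\lambda|_\infty<1$.} Then $s=|\lambda|_\infty$. Set $M:=\lfloor 1/s\rfloor\ge1$ and write $T(m):=\sum_{|h|_\infty>m}|\gamma(h)|$, so that $T(m)\lesssim(1+m)^{-A}$ by \eqref{eq:cov-tail}. For $m=0$ we simply use $T(0)\le\Delta_2^2$. We split the sum over $h$ into two parts.
\begin{itemize}
\item Far part, $|h|_\infty>M$: using the bound $2$ on $|\cos x-1|$, the contribution is at most $2T(M)\lesssim M^{-A}\lesssim s^A$. Since $s<1$, this is dominated by each right-hand side of \eqref{eq:spec-modulus}.
\item Near part, $|h|_\infty\le M$: using the quadratic bound, the contribution is at most $\tfrac{d^2}{2}s^2\sum_{|h|_\infty\le M}|h|_\infty^2|\gamma(h)|$.
\end{itemize}

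\textbf{The weighted near sum (the only real step).} Group $h$ by shells $|h|_\infty=m$, whose $|\gamma|$-mass is $T(m-1)-T(m)$, and sum by parts:
\[
\sum_{|h|_\infty\le M}|h|_\infty^2|\gamma(h)|=\sum_{m=1}^M m^2\bigl(T(m-1)-T(m)\bigr)\le\sum_{m=0}^{M-1}(2m+1)T(m)\lesssim\sum_{m=0}^{M-1}(1+m)^{1-A}.
\]
The last sum is $\lesssim M^{2-A}$ if $A<2$, $\lesssim\log(eM)$ if $A=2$, and $\lesssim1$ if $A>2$.

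\textbf{Conclusion.} Multiplying by $s^2$ and using $M\le1/s$ turns these three bounds into $s^A$, $s^2\log(e/s)$ and $s^2$ respectively. Adding the far-part bound $s^A$, which is no larger than any of them, yields \eqref{eq:spec-modulus}. The implied constants depend only on $d$, $A$, $\Delta_2$ and the constant in \eqref{eq:residual-rate-main}.
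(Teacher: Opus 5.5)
Your proposal is correct and is essentially the paper's proof: the same cosine expansion, the same crude bound for $|\lambda|_\infty\ge1$, and the same split at $|h|_\infty\approx1/s$ with the tail controlled by \eqref{eq:cov-tail}. The only difference is that you bound $\sum_{|h|_\infty\le M}|h|_\infty^2|\gamma(h)|$ by summation by parts over unit shells, whereas the paper uses dyadic annuli $2^{j-1}<|h|_\infty\le2^j$; both give the same three regimes. You should also set aside the trivial case $\lambda=0$ separately, as the paper does, since there $s=0$ and $M=\lfloor1/s\rfloor$ is undefined.
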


\begin{lemma}\label{lem:dirichlet}
For $m\in\N$, let $D_m(u):=\sum_{v=1}^me^{\iu vu}$ denote the Dirichlet kernel. Then for every $\varrho\in(0,4]$ and $m\in\N$,
\begin{equation}\label{eq:dirichlet}
    \int_{-\pi}^\pi|u|^\varrho|D_m(u)|^2\frac{\mathrm{d}u}{2\pi}\lesssim \begin{cases}
        m^{1-\varrho}, & 0<\varrho<1,\\
        \log(em), & \varrho=1,\\
        1, & \varrho>1.
    \end{cases}
\end{equation}
\end{lemma}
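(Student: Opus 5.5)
The plan is a direct computation: use the closed form of the Dirichlet kernel, bound it by the minimum of its two natural envelopes, and split the integral at the scale $|u|\asymp 1/m$. The implied constants are allowed to depend on $\varrho$, which is fixed in every application (cf.\ \S\ref{ssc:spec-origin}).

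\textbf{Step 1 (pointwise bound).} For $u\ne0$,
\[
|D_m(u)|=\Big|\frac{e^{\iu mu}-1}{e^{\iu u}-1}\Big|=\frac{|\sin(mu/2)|}{|\sin(u/2)|}.
\]
The trivial bound is $|D_m(u)|\le m$. On $[-\pi,\pi]$, concavity of $\sin$ on $[0,\pi/2]$ gives $|\sin(u/2)|\ge |u|/\pi$. Together these yield
\[
|D_m(u)|^2\le \min\{m^2,\ \pi^2u^{-2}\},\qquad |u|\le\pi .
\]
By symmetry it suffices to integrate over $[0,\pi]$.

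\textbf{Step 2 (inner region, $0\le u\le 1/m$).} Use the bound $m^2$:
\[
\int_0^{1/m}u^\varrho m^2\,\mathrm{d}u=\frac{m^{1-\varrho}}{\varrho+1}.
\]
This is of order $m^{1-\varrho}$ in all cases. It is $\le1$ when $\varrho\ge1$, so it is dominated by the claimed right-hand side in every case, including $\varrho=1$, where $1\le\log(em)$.

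\textbf{Step 3 (outer region, $1/m<u\le\pi$).} Use the bound $\pi^2u^{-2}$. The contribution is at most $\pi^2\int_{1/m}^{\pi}u^{\varrho-2}\,\mathrm{d}u$, and there are three cases.
\begin{itemize}
\item If $\varrho<1$, this is $\le \pi^2 m^{1-\varrho}/(1-\varrho)$.
\item If $\varrho=1$, it equals $\pi^2\log(\pi m)\lesssim\log(em)$.
\item If $1<\varrho\le4$, it is $\le\pi^2\,\pi^{\varrho-1}/(\varrho-1)\lesssim1$.
\end{itemize}
Adding Steps 2 and 3, and doubling for the negative half-line, gives the three cases of \eqref{eq:dirichlet}.

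There is no real obstacle here. The only point needing care is the lower bound $|\sin(u/2)|\ge|u|/\pi$ on the full range $|u|\le\pi$. The constants degenerate as $\varrho\to1$, but this is harmless because $\varrho$ takes only the fixed values $2A$, $3$ or $4$.
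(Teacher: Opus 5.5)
Your proof is correct and matches the paper's argument: bound $|D_m(u)|\le\min\{m,\pi/|u|\}$ using $|\sin(u/2)|\ge|u|/\pi$ on $[-\pi,\pi]$, split the integral at $|u|=1/m$, and evaluate the two power integrals in the three regimes of $\varrho$. Your remark that the constants depend on $\varrho$ is consistent with how the paper applies the lemma, namely for the fixed values $\varrho=2A$, $3$ or $4$.
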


\subsection{Additional auxiliary results}\label{ssc:aux-additional}
Finally, we collect the moment and maximal inequalities for the field, its $m$-dependent approximations and the surrogate fields, which are used throughout.

\begin{lemma}[Propositions 1 and 3 of \cite{el2013central}]\label{prop:coefficient}
For every finitely supported deterministic array $(a_i)$,
\begin{equation}\label{eq:coefficient-eps}
  \Big\|\sum_i a_iX_i\Big\|_p
  \lesssim \Delta_p\Big(\sum_i a_i^2\Big)^{1/2}.
\end{equation}
Moreover, for every $m\ge1$,
\begin{equation}\label{eq:coefficient-residual}
  \Big\|\sum_i a_i (X_i - X_i^{[m]})\Big\|_p
  \lesssim \Delta_p^{[m]}\Big(\sum_i a_i^2\Big)^{1/2}.
\end{equation}
\end{lemma}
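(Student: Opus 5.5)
The plan is to prove the moment bound \eqref{eq:coefficient-eps} through a martingale decomposition along an enumeration of the innovation sites, and then to obtain \eqref{eq:coefficient-residual} by applying the same argument to the residual field. Enumerate $\Z^d=\{h_1,h_2,\ldots\}$. Put $\mathcal{F}_0$ equal to the trivial $\sigma$-field and $\mathcal{F}_t=\sigma(\eta_{h_1},\ldots,\eta_{h_t})$. Let $S=\sum_i a_iX_i$; this sum is finite, $S\in L^p$, and $\IE S=0$. Since $S$ is measurable with respect to $\sigma(\eta_h:h\in\Z^d)=\sigma(\cup_t\mathcal{F}_t)$, martingale convergence in $L^p$ gives
\[
S=\sum_{t\ge1}d_t,\qquad d_t:=\IE[S\mid\mathcal{F}_t]-\IE[S\mid\mathcal{F}_{t-1}].
\]
We first apply Burkholder's inequality to the partial sums and then let the number of terms tend to infinity. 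Because $p/2\ge1$, Minkowski's inequality in $L^{p/2}$ then gives
\[
\|S\|_p\lesssim\Big\|\Big(\sum_t d_t^2\Big)^{1/2}\Big\|_p\le\Big(\sum_t\|d_t\|_p^2\Big)^{1/2}.
\]

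The key step is to bound each projection. Write $d_t=\sum_i a_i\mathcal{P}_tX_i$, where $\mathcal{P}_t=\IE[\cdot\mid\mathcal{F}_t]-\IE[\cdot\mid\mathcal{F}_{t-1}]$. Let $X_i^{(h_t)}$ denote $X_i$ with $\eta_{h_t}$ replaced by an independent copy $\eta'_{h_t}$. Then $\IE[X_i\mid\mathcal{F}_{t-1}]=\IE[X_i^{(h_t)}\mid\mathcal{F}_t]$, so that $\mathcal{P}_tX_i=\IE[X_i-X_i^{(h_t)}\mid\mathcal{F}_t]$. Conditional Jensen's inequality and stationarity then give $\|\mathcal{P}_tX_i\|_p\le\|X_{i-h_t}-X_{i-h_t}^\star\|_p=\delta_{i-h_t,p}$. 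It follows that
\[
\|d_t\|_p\le\sum_i|a_i|\,\delta_{i-h_t,p}=(|a|*\delta_{\cdot,p})(h_t).
\]
Summing over $t$ is equivalent to summing over all $h\in\Z^d$. Young's convolution inequality $\ell^2*\ell^1\subset\ell^2$ therefore yields
\[
\Big(\sum_h\big(|a|*\delta_{\cdot,p}\big)(h)^2\Big)^{1/2}\le\Big(\sum_ia_i^2\Big)^{1/2}\Delta_p,
\]
which proves \eqref{eq:coefficient-eps}.

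For \eqref{eq:coefficient-residual}, set $\tilde X_i:=X_i-X_i^{[m]}$. This is again a stationary Bernoulli shift of the i.i.d.\ innovations, with $\tilde X_i=\tilde g((\eta_{i-s})_s)$ where $\tilde g:=g-g_m$ composed with the coordinate restriction, because $g_m$ was fixed once and for all. It is centered, since $\IE X_i^{[m]}=\IE X_i=0$, and it lies in $L^p$ by conditional Jensen's inequality. Its functional-dependence coefficients are, by definition, the $\delta^{[m]}_{i,p}$ of \eqref{eq:residual-dependence}, and their sum is $\Delta_p^{[m]}$. Repeating the argument above verbatim with $\tilde X$ in place of $X$ gives the second bound.

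I expect the only delicate point to be the limiting argument behind the projection identity. One must ensure that the swap identity $\IE[X_i\mid\mathcal{F}_{t-1}]=\IE[X_i^{(h_t)}\mid\mathcal{F}_t]$ holds; this follows from independence of $\eta'_{h_t}$ together with Fubini. One must also apply Burkholder's inequality to finite truncations $\sum_{t\le T}d_t$ and pass $T\to\infty$ by Fatou's lemma and $L^p$ martingale convergence. The constant in the final bound is Burkholder's constant, which depends only on $p$.
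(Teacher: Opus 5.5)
Your proof is correct and follows essentially the argument of the source the paper cites for this lemma (\citet{el2013central}, Propositions 1 and 3): a Doob martingale along an enumeration of the innovation sites, the coupling bound $\|\mathcal{P}_tX_i\|_p\le\delta_{i-h_t,p}$ (the same projection bound the paper uses in its proof of Lemma~\ref{lem:cov-tail}), a Burkholder-type square-function inequality, and Cauchy--Schwarz/Young to sum over sites, with the second bound obtained by applying the first to the Bernoulli shift $X_i-X_i^{[m]}$, whose coefficients are exactly $\delta^{[m]}_{i,p}$. The only cosmetic point is that $\sum_i|a_i|\delta_{i-h,p}$ is a convolution of $|a|$ with the reflected array $(\delta_{-j,p})_j$, which has the same $\ell^1$ norm $\Delta_p$, so Young's inequality applies unchanged.
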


Rectangular maximal inequalities for spatial data is not a recent topic by any means. For multiparameter
martingales adapted to product filtrations  see \citep{Cairoli1970} and
related settings
\citep{christofides1990maximal,hirsch1995potential,walsh2006martingales}.
For random fields, \citet{cuny2026weak} gives a Rosenthal inequality
under a causal innovation representation,
while \citet{bonnerjee2026fast} derives maximal bounds for linear and
finite-range spatial processes.
The lemma below follows from \citet{moricz1977moment}; see also
\citet{Moricz1979MultiparameterII,Moricz1983Rectangular}.
Combined with Lemma~\ref{prop:coefficient}, it supplies the rectangular
maximal bounds needed for the original field, its local approximation
residuals and the surrogate fields. Unlike \cite{bonnerjee2026fast} or \cite{cuny2026weak}, no separate maximal-inequality
assumption or restriction to coordinatewise nonnegative innovation
lags is needed here even for the wider class of spatial dependence. 
\color{black}

\begin{lemma}\label{lem:maximal}
For some $d\ge 1$, let $(Z_i)_{i\in\Z^d}$ be a centered random field such that, for some $p>2$ and $K<\infty$,
\begin{equation}\label{eq:maximal-hyp}
    \Big\|\sum_i a_iZ_i\Big\|_p\le K\Big(\sum_ia_i^2\Big)^{1/2}
\end{equation}
for every finitely supported deterministic array $(a_i)$. Then, for every $ Q=\prod_{j=1}^d\{a_j,\dots,a_j+L_j-1\},$ it follows,
\begin{equation}\label{eq:rect-maximal}
  \big \|
    \max_{R\preceq Q}
    |\sum_{i\in R}Z_i|\big\|_p
  \lesssim K \abs{Q}^{1/2}.
\end{equation}
In particular, in view of Lemma \ref{prop:coefficient}, if $\Delta_p<\infty$, then \eqref{eq:rect-maximal} applies to $(X_i)$ with some $K\lesssim\Delta_p$ and, for every $m\ge1$, to $(X_i-X_i^{[m]})$ with some $K\lesssim\Delta_p^{[m]}$. The same holds for the surrogate fields $(X^Q_i)$ and $(X^Q_i-X^{Q,[m]}_i)$ of \S\ref{ssc:blocking}, where $X^{Q,[m]}_i:=g_m(\eta^Q_{i-s};|s|_\infty\le m)$, since these fields have the same laws as $(X_i)$ and $(X_i-X_i^{[m]})$, respectively.
\end{lemma}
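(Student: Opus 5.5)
The plan is the classical Móricz-type route: first prove a moment bound for single rectangle sums, then upgrade it to a maximal bound by dyadic chaining in each coordinate. Fix a rectangle $Q$ with side lengths $L_1,\dots,L_d$. Taking $a_i=\ind\{i\in R\}$ in \eqref{eq:maximal-hyp} gives, for every subrectangle $R\subseteq Q$,
\[
\Big\|\sum_{i\in R}Z_i\Big\|_p\le K|R|^{1/2}.
\]
Equivalently, $\IE|S(R)|^p\le K^p g(R)^{p/2}$ with $g(R)=|R|$. Here $g$ is superadditive: if a rectangle is split along one coordinate into two rectangles, the cardinalities add exactly. Since $p/2>1$, this is exactly the situation of \citet{moricz1977moment}. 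His theorem for $d$-parameter fields says that if $\IE|S(R)|^\gamma\le g(R)^\alpha$ for every subrectangle, with $g$ superadditive and $\alpha>1$, then $\IE\max_{R\preceq Q}|S(R)|^\gamma\lesssim g(Q)^\alpha$, with a constant depending only on $d,\gamma,\alpha$. Applying it with $\gamma=p$ and $\alpha=p/2$ gives \eqref{eq:rect-maximal}.

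For a self-contained argument, I would proceed by induction on $d$, using a one-dimensional maximal inequality at each step.

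\emph{Base case $d=1$.} I would use the dyadic chaining proof of the Móricz/Serfling inequality. Any prefix $\{a,\dots,b\}$ is a disjoint union of at most one dyadic interval per level $\ell=0,\dots,\lceil\log_2L\rceil$. For each level, take the maximum over the $\le L2^{-\ell}$ dyadic intervals of length $2^\ell$ and bound it by the $\ell^p$ sum of those intervals' sums. This gives an $L^p$ norm at most $K(L2^{-\ell})^{1/p}2^{\ell/2}=KL^{1/p}2^{\ell(1/2-1/p)}$. Because $p>2$, this is a geometric series in $\ell$ dominated by its top level, which is $\lesssim KL^{1/2}$.

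\emph{Inductive step.} For general $d$, I would iterate the same chaining coordinatewise. Write any prefix $R=\prod_j\{a_j,\dots,b_j\}$ as a disjoint union of products $\prod_j I_j$, where $I_j$ ranges over the dyadic pieces of the $j$th prefix, one per level $\ell_j$. Then
\[
\max_R|S(R)|\le\sum_{\ell_1,\dots,\ell_d}\max_{\text{dyadic boxes of shape }(2^{\ell_j})}|S(\text{box})|.
\]
For a fixed shape, the $L^p$ norm of this maximum is at most
\[
K\Big(\prod_j L_j2^{-\ell_j}\Big)^{1/p}\prod_j2^{\ell_j/2}=K|Q|^{1/p}\prod_j2^{\ell_j(1/2-1/p)}.
\]
Summing the product of geometric series over all $(\ell_1,\dots,\ell_d)$ gives $\lesssim K\prod_j L_j^{1/2}=K|Q|^{1/2}$, with a constant depending only on $d$ and $p$. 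This makes the inductive step unnecessary, since the chaining works in all coordinates at once. One detail to handle: side lengths that are not powers of two. I would embed $Q$ in a dyadic box with sides $2^{\lceil\log_2L_j\rceil}\le2L_j$ and extend $Z$ by zero outside $Q$. The hypothesis \eqref{eq:maximal-hyp} still holds for the truncated field, because it is just a restriction of the coefficient arrays.

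\emph{The ``in particular'' clauses.} These follow by plugging in Lemma~\ref{prop:coefficient}, which gives $K\lesssim\Delta_p$ for $(X_i)$ and $K\lesssim\Delta_p^{[m]}$ for the residual field. The surrogate fields satisfy the same bounds because \eqref{eq:maximal-hyp} depends only on the joint law, and $\BX^Q$ and $(X^Q_i-X^{Q,[m]}_i)$ are built by evaluating the same $g$ and $g_m$ at an i.i.d. field with the law of $(\eta_j)$.

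The only delicate point is the geometric summation. It needs $p>2$ strictly; at $p=2$ one would lose a $\log^d$ factor. I do not expect any other obstacle.
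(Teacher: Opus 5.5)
Your proposal is correct, and its first paragraph is exactly the paper's proof: testing \eqref{eq:maximal-hyp} on indicators gives $\IE|\sum_{i\in R}Z_i|^p\le (K^2|R|)^{p/2}$, Theorem~7 of \citet{moricz1977moment} with the additive (hence superadditive) $g(R)=K^2|R|$ and exponent $p/2>1$ then yields \eqref{eq:rect-maximal}, and the surrogate clauses follow from equality of laws as you argue. Your supplementary dyadic-chaining argument, with the zero-extension to a dyadic box, is also sound; a fixed dyadic grid suffices here because $g$ is cardinality, whereas M\'oricz's theorem covers general superadditive $g$.
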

\section{Proofs of auxiliary results}\label{sec:helper-proofs}
\subsection{Proofs of results in Appendix \ref{sec:proof-GA}}

\begin{proof}[Proof of Lemma \ref{lem:Haar-algebra}]
By \eqref{eq:haar-values} and \eqref{eq:tensor-haar}, if $i\in Q_{\lambda}$, then $|\psi_{\lambda}(i)|= |Q_{\lambda}|^{-1/2}$, and therefore $\HA_{\lambda i}=\abs{Q_\lambda}^{1/2}\psi_\lambda(i)\in\{0,\pm1\},$
proving (a). Moving on, since $\HO$ is orthogonal,
\begin{equation*}
  \rho^\star(\HA)=\rho^\star(\HD)=\max_\lambda\abs{Q_\lambda}^{1/2}=2^{kd/2};
\end{equation*}
the maximum is attained by $\lambda=(\phi, \ldots, \phi)$, proving (b).

In one dimension, a site $i$ belongs to the father support and to exactly one wavelet support at each of the $k$ dyadic support scales, and therefore, exactly $k+1$ basis functions are nonzero at that site.  Tensorization gives $(k+1)^d$ nonzero entries in each column, each of magnitude $1$, proving (c).

Next, consider $[m]$ for some $m \le 2^k$. For a dyadic interval $I$ with $|I|\ge2$, $\langle b_I, \ind_{[m]}\rangle \ne 0$ only if $[m]$ cuts $I$, i.e., $[m]\cap I\ne\emptyset$ and $I\not\subseteq[m]$ (if $I\subseteq[m]$ the inner product vanishes since $b_I$ has mean zero). Therefore,
\[\abs{\inner{\ind_{[m]}, {b_I}}}
  =
  |I|^{-1/2} \min\{|I \cap [m]|,|I|-|I \cap [m]|\}\ind\{[m] \text{ cuts } I\}
  \le \frac{|I|^{1/2}}{2}\ind\{[m] \text{ cuts } I\},\]
while $|\langle\ind_{[m]},\phi\rangle|=m2^{-k/2}\le|I_\phi|^{1/2}$. Observe that at each of the $k$ wavelet scales, at most one \textit{dyadic} interval is cut by $[m]$. Consequently, $\sum_{b\in\cB_k}|\langle\ind_{[m]},b\rangle|/|I_b|^{1/2}\le k+1$, and for $m=(m_1, \ldots, m_d)\in Q$ the tensor sum factorizes:
\begin{align*}
\sum_{\lambda\in\cB_k^d}
  \frac{\abs{\inner{\ind_{[1,m]}, {\psi_\lambda}}}}
       {\abs{Q_\lambda}^{1/2}}
&=
\prod_{j=1}^d
\sum_{b\in\cB_k}
\frac{\abs{\inner{\ind_{[m_j]}, b}}}
     {\abs{I_b}^{1/2}}\le(k+1)^d,
\end{align*}
proving (d). Finally, $\HA^{-1}=\HO^{\top}\HD^{-1}$, so
\[ x_i=\sum_{\lambda\in\cB_k^d}
  \frac{\psi_{\lambda}(i)}{\abs{Q_\lambda}^{1/2}}(\HA x)_\lambda.\]
Therefore, for $m \in Q$, it follows,
\begin{align*}
\Big|\sum_{i\in[1,m]}(x_i-g_i)\Big|
&=\bigg|
  \sum_\lambda
  \frac{\inner{\ind_{[1,m]}, {\psi_\lambda}}}
       {\abs{Q_\lambda}^{1/2}}
  (\HA(x-g))_\lambda
\bigg|\\&\le
|\HA(x-g)|_\infty
\sum_\lambda
  \frac{\abs{\inner{\ind_{[1,m]}, {\psi_\lambda}}}}
       {\abs{Q_\lambda}^{1/2}}
\le (k+1)^d|\HA(x-g)|_\infty.
\end{align*}
This proves (e).
\end{proof}
\begin{proof}[Proof of Lemma \ref{lem:rademacher}]
(i) is Doob's martingale property. (ii) and (iii) follow from \eqref{eq:D_hb}, since $\zeta_{h,b}^2=1$ and $v_{h,b}$ is $\cG_{h,b-1}$-measurable. For (iv), $\zeta_{h,b}^3=\zeta_{h,b}$ and $\IE[\zeta_{h,b}|\cG_{h,b-1}]=0$. For the truncated version, by (ii) the indicator $\ind\{|D_{h,b}|\le u\}=\ind\{|v_{h,b}|\le u\}$ is $\cG_{h,b-1}$-measurable, so the truncated increment equals $\zeta_{h,b}v_{h,b}\ind\{|v_{h,b}|\le u\}$, which is again of the form \eqref{eq:D_hb}.
\end{proof}

\subsection{Proofs of results in \S\ref{ssc:aux-main}}

\begin{proof}[Proof of Proposition \ref{prop:Q-blocks}]
We use the finite-range approximations and a decomposition into layers according to distance from the block boundary. Recall $g_m$ from \eqref{eq:local-def}, and define $X_i^{Q, [m]}:= g_m(\eta_{i-s}^Q;|s|_\infty\le m)$. Let $Q=\prod_{j=1}^d\{a_j,\dots,a_j+L-1\}$, and for $i \in Q$, define its depth
    \[b_Q(i)=\min_{1\le j\le d}
  \min\{i_j-a_j,\ a_j+L-1-i_j\}.\]  Let $\cD_0=\{i\in Q:b_Q(i)=0\}$ and $
\cD_j=\{i\in Q:2^{j-1}\le b_Q(i)<2^j\},\ j\ge1$ be the \textit{peeled} layers of $Q$\color. Since $b_Q(i)\le (L-1)/2<2^{k-1}$, the number of non-empty layers is at most $k+1$. With $Q_t:=\{i\in Q: b_Q(i)\ge t\}$, which is a (possibly empty) concentric sub-box of $Q$, one has $\cD_0=Q\setminus Q_1$ and $\cD_j=Q_{2^{j-1}}\setminus Q_{2^j}$. The difference of two concentric boxes can be partitioned into at most $2d$ axis-aligned rectangles (slabs), by assigning a point to the first coordinate in which it lies outside the inner box. Therefore, each $\cD_j$ is a disjoint union of at most $2d$ axis-aligned rectangles $P_{j, l}$, each satisfying \begin{align}
    |P_{j, l}| \lesssim 2^j L^{d-1}. \label{eq:B_j_control}
\end{align}
Moreover, if $R\preceq Q$, then $R\cap P_{j,l}$ is either empty or a sub-rectangle anchored at the lower-left corner of $P_{j,l}$, i.e., $R\cap P_{j,l}\preceq P_{j,l}$: indeed, $R$ is anchored at the lower-left corner of $Q$, which is coordinatewise smaller than that of $P_{j,l}$. With this peeling, observe that triangle inequality dictates
\[ \big\| \max_{R \preceq Q} |\sum_{i\in R}(X_i -  X_i^Q)|\big\|_p \le \sum_{j,l}
  \big \|\max_{B\preceq P_{j,l}}
  |\sum_{i\in B}(X_i-X_i^Q)| \big\|_p. \]
When $j=0$, Lemma \ref{lem:maximal} implies that
\begin{align}
    \sum_{l} \Big(\big \|\max_{B\preceq P_{0,l}}
  |\sum_{i\in B}X_i| \big\|_p +\big \|\max_{B\preceq P_{0,l}}
  |\sum_{i\in B} X_i^Q| \big\|_p\Big) \lesssim \sum_{l} \Delta_p |P_{0, l}|^{1/2} \overset{(a)}{\lesssim} \Delta_p 2d L^{(d-1)/2}, \label{eq:j_0}
\end{align}
where $(a)$ uses \eqref{eq:B_j_control}. For $j \ge 1$, note that $i \in P_{j, l} \subset \cD_j$ implies that $b_Q(i) \ge 2^{j-1}$, which implies that for $|s|_{\infty}\le 2^{j-1}$ it holds $i-s\in Q$, so that $\eta_{i-s}=\eta_{i-s}^Q$ and therefore, $X_i^{[2^{j-1}]}= X_i^{Q, [2^{j-1}]}$ pathwise. Consequently, $X_i-X^Q_i=(X_i-X_i^{[2^{j-1}]})-(X^Q_i-X_i^{Q, [2^{j-1}]})$ on $P_{j,l}$, and Lemma \ref{lem:maximal}, applied to the two residual fields, yields
\begin{align}
   \sum_{l}  \big \|\max_{B\preceq P_{j,l}}
  |\sum_{i\in B}(X_i-X_i^Q)| \big\|_p &\le   \sum_{l} \Big(\big \|\max_{B\preceq P_{j,l}}
  |\sum_{i\in B}(X_i - X_i^{[2^{j-1}]})| \big\|_p \nonumber\\
  &\hspace{2cm}+\big \|\max_{B\preceq P_{j,l}}
  |\sum_{i\in B} (X_i^Q-X_i^{Q, [2^{j-1}]}) | \big\|_p\Big) \nonumber\\ &\lesssim \sum_{l} \Delta_p^{[2^{j-1}]} |P_{j, l}|^{1/2} \overset{(b)}{\lesssim} 2^{j(1/2-A)} L^{(d-1)/2}, \label{eq:j_general}
\end{align}
where $(b)$ employs \eqref{eq:residual-rate-main} and \eqref{eq:B_j_control}. Combining \eqref{eq:j_0}-\eqref{eq:j_general}, it follows that
\begin{align}
    \big\| \max_{R \preceq Q} |\sum_{i\in R}(X_i -  X_i^Q)|\big\|_p & \lesssim L^{(d-1)/2} \Big( 1+\sum_{j=1}^{k} 2^{j(1/2-A)} \Big) \nonumber\\
    &\lesssim \begin{cases}
        L^{(d-1)/2}, & A>1/2,\\
L^{(d-1)/2}(k+1), & A=1/2,\\
L^{d/2- A}, & A<1/2,
    \end{cases} \nonumber
\end{align}
where in the last case we used $\sum_{j=1}^k2^{j(1/2-A)}\lesssim 2^{k(1/2-A)}=L^{1/2-A}$. This completes the proof.
\end{proof}

\begin{proof}[Proof of Lemma \ref{lem:transfer}]
Since $\mathbb{G}$ and $\mathbb{H}$ are standard Borel, there is a probability kernel $K$ from $\mathbb{G}$ to $\mathbb{H}$ such that $\tau(\mathrm{d}x,\mathrm{d}z)=\mu(\mathrm{d}x)K(x,\mathrm{d}z)$ \citep[Chapter 6]{kallenberg2002foundations}. Since $\omega\mapsto K(\BX(\omega),B)$ is $\Fcal_0$-measurable for every $B\in\mathcal{B}(\mathbb{H})$, the formula
\[\tilde\IP(\mathrm{d}\omega,\mathrm{d}z):=\IP_0(\mathrm{d}\omega)\,K(\BX(\omega),\mathrm{d}z)\]
defines a probability measure on $(\tilde\Omega,\tilde\Fcal)$. Because $K(\BX(\omega),\mathbb{H})=1$, one has $\tilde\IP(G\times\mathbb{H})=\IP_0(G)$ for every $G\in\Fcal_0$, proving (a). If $\varphi:\mathbb{G}\times\mathbb{H}\to\R$ is bounded and Borel, then
\begin{align*}
\int\varphi(\BX(\omega),z)\,\tilde\IP(\mathrm{d}\omega,\mathrm{d}z)&=\int_{\Omega_0}\int_{\mathbb{H}}\varphi(\BX(\omega),z)K(\BX(\omega),\mathrm{d}z)\IP_0(\mathrm{d}\omega)\\
&=\int_{\mathbb{G}}\int_{\mathbb{H}}\varphi(x,z)K(x,\mathrm{d}z)\mu(\mathrm{d}x)=\int\varphi\,\mathrm{d}\tau,
\end{align*}
which proves (b).
\end{proof}

\subsection{Proofs of results in \S\ref{ssc:aux-GA}}

\begin{proof}[Proof of Lemma \ref{lem:M_h}]
    Fix $0\le s\le k$ and $h=h_j\in H_Q$. Since $\{D_{h_j,b}^{(s)}\}_{b\in [B]}$ is a martingale difference sequence in the Hilbert space $(\R^Q,|\cdot|)$ with respect to the filtration $\{\cG_{h_j,b}\}_{b \in [B]}$ (recall $\cG_{h_j,0}=\cG_{h_{j-1},B}$), Burkholder's square-function inequality for Hilbert-space valued martingales \citep[Theorem~3.1]{burkholder1988sharp}\color{black}{} entails that
    \begin{align}
         \|M_h^{(s)}\|_p \lesssim \Big\| \sum_{b=1}^B D_{h,b}^{(s)}\Big\|_p . \label{eq:burkholder}
    \end{align}
    Note that $\sum_b D_{h_j,b}^{(s)}= \IE[Z^{(s)} | \cG_{h_j, B}] - \IE[Z^{(s)} | \cG_{h_{j-1}, B}].$ Let $U'_{h_j}$ be a $\mathrm{Unif}(0,1)$ random variable independent of everything else, and let $Z^{(s), \{h_j\}}$ be the random vector defined similar to $Z^{(s)}$, but with $\eta_{h_j}$ replaced by $\eta_{h_j}':=F_\eta^{-1}(U'_{h_j})$. The vector $Z^{(s),\{h_j\}}$ is a function of $U'_{h_j}$ and $(U_u)_{u\ne h_j}$, which are jointly independent of $(B_{h_j,b})_{b\in[B]}$; moreover, $\cG_{h_{j-1},B}$ is generated by bits at sites other than $h_j$. Therefore,
    \[\IE[Z^{(s), \{h_j\}} | \cG_{h_j, B}]=\IE[Z^{(s), \{h_j\}} | \cG_{h_{j-1}, B}]=\IE[Z^{(s)} | \cG_{h_{j-1}, B}],\]
    where the last equality holds since replacing $U_{h_j}$ by $U'_{h_j}$ does not alter the joint law of $Z^{(s)}$ and the generators of $\cG_{h_{j-1},B}$. Consequently, by the conditional Jensen inequality,
    \[\Big\|\sum_bD^{(s)}_{h,b}\Big\|_p=\big\|\IE[Z^{(s)}-Z^{(s),\{h\}}|\cG_{h,B}]\big\|_p\le\|Z^{(s)}-Z^{(s),\{h\}}\|_p.\]
    Finally, $Z^{(s)}-Z^{(s),\{h\}}=\HA(\xi^{(s)}_i-\xi^{(s),\{h\}}_i)_{i\in Q}$, where $\xi^{(s),\{h\}}_i$ denotes $\xi^{(s)}_i$ with $\eta_h$ replaced by $\eta'_h$. By stationarity, replacing $\eta_h$ in $\xi^{(s)}_i$ is equivalent to replacing $\eta_0$ in $\xi^{(s)}_{i-h}$, so that $\|\xi^{(s)}_i-\xi^{(s),\{h\}}_i\|_p=\delta^{(s)}_{i-h,p}$. Hence, \eqref{eq:burkholder} along with \eqref{eq:haar-column} entail that
    \begin{align}
        \|M_h^{(s)}\|_p \lesssim \| Z^{(s)} - Z^{(s), \{h\}}\|_p \lesssim (k+1)^{d/2} \sum_{i\in Q} \delta_{i-h, p}^{(s)} = (k+1)^{d/2}a_{s,h}\lesssim (k+1)^{d/2}\Theta_{s,p}. \nonumber
    \end{align}
    This completes the proof.
\end{proof}
\begin{proof}[Proof of Lemma \ref{lem:cov-2nd-term}]
By Lemma \ref{lem:rademacher}, the increments $D_{h,b}$ are orthogonal martingale differences with $\IE[D_{h,b}D_{h,b}^\top]=\IE[V_{h,b}]$, and $Z_B=\sum_{h,b}D_{h,b}$ is centered. Therefore $\Cov(Z_B)=\IE[\sum_{h,b}V_{h,b}]$. For $0\le t\le s\le k$ and $h\in H_Q$, define $T_h^{s,s}=\sum_b v_{h,b}^{(s)}v_{h,b}^{(s)\top}$ and, for $t<s$,
\[T_h^{s,t}=\sum_b \big(v_{h,b}^{(s)}v_{h,b}^{(t)\top}+ v_{h,b}^{(t)}v_{h,b}^{(s)\top} \big).\]
Since $V_{h,b}=v_{h,b}v_{h,b}^\top$ with $v_{h,b}=\sum_{s=0}^kv^{(s)}_{h,b}$, we can write
\[\sum_{h,b} V_{h,b} - \Cov(Z_B) = \sum_{0\le t\le s\le k} \sum_h (T_h^{s,t} - \IE[T_h^{s,t}]).\]

\textit{Moment bound.} Since $\rho^\star(xy^\top)=|x||y|$, the Cauchy-Schwarz inequality in $b$ yields $\rho^\star(T^{s,t}_h)\le 2\sum_b|v^{(s)}_{h,b}||v^{(t)}_{h,b}|\le 2M^{(s)}_hM^{(t)}_h$. Hence, by H\"older's inequality and Lemma \ref{lem:M_h},
\begin{align}
    \|\rho^{\star}(T_h^{s,t})\|_{p/2} \le 2 \|M^{(s)}_h\|_p\|M^{(t)}_h\|_p\lesssim (k+1)^d \Theta_{s,p}\Theta_{t,p}, \nonumber
\end{align}
which readily implies that
\begin{align}
    \|\rho^{\star}(T_h^{s,t}- \IE[T_h^{s,t}])\|_{p/2}\lesssim (k+1)^d \Theta_{s,p}\Theta_{t,p}. \label{eq:indiv-T_h-bound}
\end{align}

\textit{Locality.} We claim that, for $t\le s$, $T_h^{s,t}$ is measurable with respect to $\mathcal{L}_{h,s}:=\sigma(B_{u,c}: |u-h|_{\infty} \le 2^{s+1}, c\in [B])$. Indeed, by \eqref{eq:D_hb},
\[v^{(s)}_{h,b}=\sum_{i\in Q}\HA_{\cdot i}\Big(\IE[\xi^{(s)}_i\ind\{B_{h,b}=1\}|\cG_{h,b-1}]-\IE[\xi^{(s)}_i\ind\{B_{h,b}=0\}|\cG_{h,b-1}]\Big).\]
If $|i-h|_\infty>2^s$, then by \eqref{eq:xi-local} $\xi^{(s)}_i$ is a function of $(U_u)_{u\ne h}$, so that $B_{h,b}$ is independent of $\sigma(\xi^{(s)}_i)\vee\cG_{h,b-1}$; consequently, both conditional expectations equal $\frac12\IE[\xi^{(s)}_i|\cG_{h,b-1}]$, and the $i$-th summand vanishes. If $|i-h|_\infty\le 2^s$, then $\xi^{(s)}_i\ind\{B_{h,b}=\eps\}$ is a function of $(U_u)_{|u-h|_\infty\le 2^{s+1}}$, which is independent of the bits in $\cG_{h,b-1}$ at sites $u$ with $|u-h|_\infty>2^{s+1}$. Hence, the conditional expectations given $\cG_{h,b-1}$ coincide with those given the sub-$\sigma$-field generated by the revealed bits at sites $u$ with $|u-h|_\infty\le 2^{s+1}$, which is contained in $\mathcal{L}_{h,s}$. Thus $v^{(s)}_{h,b}$, and likewise $v^{(t)}_{h,b}$ for $t\le s$, are $\mathcal{L}_{h,s}$-measurable, proving the claim.

\textit{Spatial coloring.} Choose $m_s = 2^{s+2}+1$, and for $w \in \{0,\ldots,m_s-1\}^d,$ let $\mathcal{W}_w:=\{h\in H_Q: h \bmod m_s = w \}$, where $\bmod$ operates coordinate-wise. If $h_1\ne h_2\in\mathcal{W}_w$, then $|h_1-h_2|_\infty\ge m_s>2\cdot2^{s+1}$, so that $\{T_h^{s,t}-\IE[T^{s,t}_h]\}_{h\in\mathcal{W}_w}$ are independent, centered, symmetric $2^{kd}\times 2^{kd}$ matrices. Moreover, $|\mathcal{W}_w|\le\lceil 3\cdot 2^k/m_s\rceil^d\lesssim 2^{(k-s)d}$. Writing
\[ \sum_h (T_h^{s,t} - \IE[T_h^{s,t}]) = \sum_w \sum_{h\in \mathcal{W}_w}(T_h^{s,t} - \IE[T_h^{s,t}]), \]
Lemma \ref{lem:matrix-moment} along with \eqref{eq:indiv-T_h-bound}, provides
\begin{align}
 \IE\bigg[\rho^\star\Big(\sum_{h\in \mathcal{W}_w}(T_h^{s,t} - \IE[T_h^{s,t}])\Big)\bigg] \lesssim (k+1)^{d+1-2/r} \Theta_{s,p}\Theta_{t,p} 2^{2(k-s)d/r} \nonumber.
\end{align}
This readily implies, in light of $m_s^d \lesssim 2^{sd}$ and \eqref{eq:bound-on-theta}, that
\begin{align}
    \IE\Big[\rho^\star\big(\sum_h (T_h^{s,t} - \IE[T_h^{s,t}])\big)\Big] &\lesssim 2^{2kd/r}(k+1)^{d+1-2/r}  2^{s d(1-2/r)}\Theta_{s,p}\Theta_{t,p}\nonumber\\
    &\lesssim 2^{2kd/r}(k+1)^{d+1-2/r}2^{s(d(1-2/r)-A)}2^{-tA}. \nonumber
\end{align}
Summing over $0\le t\le s\le k$, and using $\sum_{t\ge0}2^{-tA}<\infty$ and $\sum_{s=0}^k2^{s(d(1-2/r)-A)}=\nu_A$, completes the proof.
\end{proof}
\begin{proof}[Proof of Lemma \ref{lem:matrix-moment}]
Since $q_\star\le q$, \eqref{eq:matrix-q-bound} implies
$\E[\rho^\star(X_j)^{q_\star}]\le K^{q_\star}$ for every $j$. Write $L=\log(2D)$. If $H<L$, then the triangle inequality gives
$\E[\rho^\star(\sum_{j=1}^H X_j)]\le HK\le K H^{1/q_\star}L^{1-1/q_\star}$.
Hence assume $H\ge L$ and set
\begin{equation}\label{eq:matrix-truncation-level}
  \tau=K(H/L)^{1/q_\star}.
\end{equation}

Define 
$Y_j=X_j\mathbf{1}_{\{\rho^\star(X_j)\le\tau\}}-\E[X_j\mathbf{1}_{\{\rho^\star(X_j)\le\tau\}}]$. Then $Y_1,\dots,Y_H$ are independent, centered, self-adjoint, and $\rho^\star(Y_j)\le 2\tau$. Since $\E X_j=0$,
\[
X_j-Y_j
=
X_j\mathbf{1}_{\{\rho^\star(X_j)>\tau\}}
-
\E\!\big[
X_j\mathbf{1}_{\{\rho^\star(X_j)>\tau\}}
\big].
\]
Therefore, using $x\mathbf{1}_{\{x>\tau\}}\le x^{q_\star}\tau^{1-q_\star}$,
\begin{equation}\label{eq:matrix-discard}
  \E\bigg[\rho^\star\!\Big(\sum_{j=1}^H(X_j-Y_j)\Big)\bigg]
  \le 2HK^{q_\star}\tau^{1-q_\star}
  \lesssim K H^{1/q_\star}L^{1-1/q_\star}.
\end{equation}
Moreover,
$\rho^\star(\sum_{j=1}^H \E Y_j^2)
\le \sum_{j=1}^H \E[\rho^\star(A_j)^2]
\le HK^{q_\star}\tau^{2-q_\star}$,
where we used $\E Y_j^2\preceq\E A_j^2$ and
$x^2\mathbf{1}_{\{x\le\tau\}}\le \tau^{2-q_\star}x^{q_\star}$.
Since $\rho^\star(Y_j)\le 2\tau$, Theorem~6.1 of
\cite{tropp2012user}, applied to $\sum_{j=1}^H Y_j$ and
$-\sum_{j=1}^H Y_j$, gives, for every $t\ge 0$,
\[
\IP\Bigg(
\rho^\star\!\Big(\sum_{j=1}^H Y_j\Big)\ge t
\Bigg)
\le
2D\exp\Bigg(
-\frac{t^2}
{2\{ HK^{q_\star}\tau^{2-q_\star}+2\tau t/3\}}
\Bigg).
\]
Thus, since $L=\log(2D)$, for every $u\ge 0$,
\[
\IP\Bigg(
\rho^\star\!\Big(\sum_{j=1}^H Y_j\Big)
\ge
\sqrt{2 HK^{q_\star}\tau^{2-q_\star}(L+u)}
+\frac{4\tau}{3}(L+u)
\Bigg)
\le e^{-u}.
\]
Integrating this bound over $u\ge0$ gives
\[
\E\bigg[\rho^\star\!\Big(\sum_{j=1}^H Y_j\Big)\bigg]
\lesssim
\sqrt{HK^{q_\star}\tau^{2-q_\star}L}
+\tau L \lesssim K H^{1/q_\star}L^{1-1/q_\star}.
\]
This, together with
\eqref{eq:matrix-discard}, proves \eqref{eq:matrix-moment-bound}.
\end{proof}
\begin{proof}[Proof of Lemma \ref{lem:cov-compare}]
Fix $a\in\R^Q$ with $|a|=1$, and let $Y:=\sum_{i\in Q}a_iX_i$ and $Y':=\sum_{i\in Q}a_iX^{[m]}_i$. Then
\[|a^\top(\Sigma_Q-\Sigma_Q^{[m]})a|=|\Var(Y)-\Var(Y')|=|\IE[(Y-Y')(Y+Y')]|\le\|Y-Y'\|\,\|Y+Y'\|.\]
By \eqref{eq:coefficient-residual}, $\|Y-Y'\|\le\|Y-Y'\|_p\lesssim \Delta_p^{[m]}$, and by \eqref{eq:coefficient-eps}, $\|Y\|\lesssim \Delta_p$; hence $\|Y+Y'\|\le 2\|Y\|+\|Y-Y'\|\lesssim (2\Delta_p+\Delta_p^{[m]})$. Taking the supremum over $a$ proves \eqref{eq:cov-compare}, since $\Sigma_Q-\Sigma^{[m]}_Q$ is symmetric.
\end{proof}

\subsection{Proofs of results in \S\ref{ssc:aux-spectral}}

\begin{proof}[Proof of Lemma \ref{lem:cov-tail}]
Fix a bijection $\vartheta:\N\to\Z^d$, let $\mathcal{E}_\ell:=\sigma(\eta_{\vartheta(1)},\ldots,\eta_{\vartheta(\ell)})$ with $\mathcal{E}_0$ trivial, and let $P_\ell Y:=\IE[Y|\mathcal{E}_\ell]-\IE[Y|\mathcal{E}_{\ell-1}]$. Since $X_j\in L^2$ is centered and measurable with respect to $\sigma(\cup_\ell\mathcal{E}_\ell)$, martingale convergence gives $X_j=\sum_{\ell\ge1}P_\ell X_j$ in $L^2$. We first show that
\begin{equation}\label{eq:projection-bound}
    \|P_\ell X_j\|\le\delta_{j-\vartheta(\ell),2}.
\end{equation}
Let $u=\vartheta(\ell)$, and let $X_j^{(u)}$ be obtained by replacing $\eta_u$ by an independent copy $\eta'_u$. Since $X_j^{(u)}$ is a function of $\eta'_u$ and $(\eta_v)_{v\ne u}$, and $\eta_u$ is independent of these, $\IE[X^{(u)}_j|\mathcal{E}_\ell]=\IE[X^{(u)}_j|\mathcal{E}_{\ell-1}]=\IE[X_j|\mathcal{E}_{\ell-1}]$, the last equality holding since replacing $\eta_u$ does not alter the joint law of $X_j$ and the generators of $\mathcal{E}_{\ell-1}$. Therefore $P_\ell X_j=\IE[X_j-X^{(u)}_j|\mathcal{E}_\ell]$, and the conditional Jensen inequality and stationarity yield \eqref{eq:projection-bound}. Orthogonality of the projections and the Cauchy-Schwarz inequality now give
\[|\gamma(h)|=\Big|\sum_{\ell\ge1}\IE[(P_\ell X_0)(P_\ell X_h)]\Big|\le\sum_{u\in\Z^d}\delta_{-u,2}\,\delta_{h-u,2},\]
and summing over $h$ yields $\sum_h|\gamma(h)|\le\Delta_2^2\le\Delta_p^2$. Next, if $|i|_\infty>m$, then $X_i^{[m]}=g_m(\eta_{i-s};|s|_\infty\le m)$ does not involve $\eta_0$, so that $(X_i-X_i^{[m]})-(X_i-X_i^{[m]})^\star=X_i-X_i^\star$; hence $\delta_{i,2}=\delta^{[m]}_{i,2}\le\delta^{[m]}_{i,p}$ for $|i|_\infty>m$, and
\begin{equation}\label{eq:tail-delta}
    \Theta_2(m):=\sum_{|i|_\infty>m}\delta_{i,2}\le\Delta_p^{[m]}\lesssim (1+m)^{-A}.
\end{equation}
Finally, if $|h|_\infty>m$, then for every $u$ either $|u|_\infty>m/2$ or $|h-u|_\infty>m/2$. Therefore,
\[\sum_{|h|_\infty>m}|\gamma(h)|\le\sum_{|u|_\infty>m/2}\delta_{-u,2}\sum_h\delta_{h-u,2}+\sum_u\delta_{-u,2}\sum_{|h-u|_\infty>m/2}\delta_{h-u,2}\le 2\Delta_2\,\Theta_2(\lfloor m/2\rfloor),\]
which, in view of \eqref{eq:tail-delta}, proves \eqref{eq:cov-tail} for $m\ge2$; the case $m=1$ follows from $\sum_h|\gamma(h)|<\infty$.
\end{proof}
\begin{proof}[Proof of Lemma \ref{lem:spec-modulus}]
The case $\lambda=0$ is immediate.\color{black}{} If $|\lambda|_\infty\ge1$, then $s=1$ and $|f(\lambda)-f(0)|\le2\sum_h|\gamma(h)|\lesssim 1$. Let $0<|\lambda|_\infty<1$, so that $s=|\lambda|_\infty$. Since $f(\lambda)-f(0)=\sum_h\gamma(h)(\cos\langle h,\lambda\rangle-1)$ and $|1-\cos\langle h,\lambda\rangle|\le\min\{2,d^2|h|_\infty^2s^2/2\}$,
\[|f(\lambda)-f(0)|\lesssim s^2\sum_{0<|h|_\infty\le 1/s}|h|_\infty^2|\gamma(h)|+2\sum_{|h|_\infty>1/s}|\gamma(h)|.\]
By \eqref{eq:cov-tail}, $\sum_{|h|_\infty>1/s}|\gamma(h)|\lesssim(1+\lfloor1/s\rfloor)^{-A}\lesssim s^A$. For the first sum, separate the shell $|h|_\infty=1$ and cover the remaining terms by the dyadic annuli $2^{j-1}<|h|_\infty\le2^j$, $1\le j\le J:=\lceil\log_2(1/s)\rceil$. The first shell contributes $\sum_{|h|_\infty=1}|h|_\infty^2|\gamma(h)|\le\sum_h|\gamma(h)|\lesssim 1$, and \eqref{eq:cov-tail} bounds the contribution of the $j$-th annulus by $2^{2j}\sum_{|h|_\infty>2^{j-1}}|\gamma(h)|\lesssim 2^{j(2-A)}$. Consequently,
\[
\sum_{0<|h|_\infty\le1/s}|h|_\infty^2|\gamma(h)|
\lesssim \Big(1+\sum_{j=1}^{J}2^{j(2-A)}\Big) \lesssim \begin{cases}
    s^{A-2}, & \text{if } A<2,\\
    \log(e/s), & \text{if } A=2,\\
    1,& \text{if } A>2.
\end{cases}.
\]
Noting that $s^A\le s^2$ for $A>2$ proves \eqref{eq:spec-modulus}.
\end{proof}
\begin{proof}[Proof of Lemma \ref{lem:dirichlet}]
Since $|\sin(u/2)|\ge|u|/\pi$ for $|u|\le\pi$, we have $|D_m(u)|=|\sin(mu/2)/\sin(u/2)|\le\min\{m,\pi/|u|\}$. Therefore, by symmetry,
\[
\int_{-\pi}^\pi|u|^\varrho|D_m(u)|^2\frac{\mathrm{d}u}{2\pi}\lesssim m^2\int_0^{1/m}u^\varrho\,\mathrm{d}u+\int_{1/m}^\pi u^{\varrho-2}\,\mathrm{d}u\lesssim m^{1-\varrho}+\begin{cases}
    m^{1-\varrho}, & 0<\varrho<1,\\
    \log(em), & \varrho=1,\\
    1, & \varrho>1.
\end{cases}
\]
This proves \eqref{eq:dirichlet}.
\end{proof}

\subsection{Proofs of results in \S\ref{ssc:aux-additional}}

\begin{proof}[Proof of Lemma \ref{lem:maximal}]
Fix a rectangle $R'\subseteq Q$. Applying \eqref{eq:maximal-hyp} with $a_i=\ind\{i\in R'\}$ yields $\IE[|\sum_{i\in R'}Z_i|^p]\le (K^2|R'|)^{p/2}$. Since $R'\mapsto K^2|R'|$ is additive (in particular, superadditive) over disjoint rectangles, and $p/2>1$, the claim \eqref{eq:rect-maximal} follows directly from Theorem 7 of \cite{moricz1977moment}. 
\end{proof}
\end{document}